 \newtheorem{thm}{Theorem}[section]
 \newtheorem{cor}[thm]{Corollary}
 \newtheorem{lem}[thm]{Lemma}
 \newtheorem{prop}[thm]{Proposition}
 \theoremstyle{definition}
 \newtheorem{defn}[thm]{Definition}
  \newtheorem{rem}[thm]{Remark}
 \theoremstyle{remark}
 \numberwithin{equation}{section}
\newcommand{\R}{\mathbb{R}}
\newcommand{\C}{\mathbb{C}}
\newcommand{\N}{\mathbb{N}}
\newcommand{\cA}{\mathcal{A}}
\newcommand{\cG}{\mathcal{G}}
\newcommand{\cH}{\mathcal{H}}
\newcommand{\T}{\mathcal{T}}
\newcommand{\sS}{\mathfrak{S}}
\newcommand{\sH}{\mathfrak{H}}
\newcommand{\Op}{-\Delta_{\Sigma,\alpha}}
\newcommand{\Opfree}{-\Delta_{\rm free}}
\newcommand{\p}{\varphi}
\newcommand{\sk}[1]{\left\langle #1 \right\rangle}
\newcommand\ov{\overline}
\newcommand{\li}{\left(} 
\newcommand{\re}{\right)} 
\DeclareMathOperator{\Imag}{Im}
\DeclareMathOperator{\dom}{dom}
\DeclareMathOperator{\ran}{ran}
\begin{document}

%
%
%
%
%
%
%
%
%

\title[]{Spectral theory for Schr\"odinger operators with \boldmath{$\delta$}-interactions supported on curves in \boldmath{$\R^3$}}

\author[J.~Behrndt]{Jussi Behrndt}
\address{Institut f\"ur Numerische Mathematik \\
TU Graz \\
Steyrergasse 30\\
8010 Graz\\
Austria}
\email{behrndt@tugraz.at}

\author[R.\,L.~Frank]{Rupert L. Frank}
\address{
Mathematics 253-37\\
Caltech\\
Pasadena, CA 91125\\
USA}
\email{rlfrank@caltech.edu}

\author[C.~K\"uhn]{Christian K\"{u}hn}
\address{Institut f\"ur Numerische Mathematik \\
TU Graz \\
Steyrergasse 30\\
8010 Graz\\
Austria}
\email{kuehn@tugraz.at}

\author[V.~Lotoreichik]{Vladimir Lotoreichik}
\address{Department of Theoretical Physics\\
Nuclear Physics Institute, Czech Academy of Sciences\\
250 68, \v{R}e\v{z} near Prague\\
Czechia}
\email{lotoreichik@ujf.cas.cz}

\author[J.~Rohleder]{Jonathan Rohleder}
\address{Institut f\"ur Mathematik \\
TU Hamburg \\
Am Schwarzenberg-Campus 3, Geb\"aude E\\
21073 Hamburg \\
Germany}
\email{jonathan.rohleder@tuhh.de}

\subjclass{Primary 81Q10; Secondary 35P15, 35P20, 35P25, 47F05, 81Q15}

\keywords{Schr\"odinger operator, $\delta$-interaction, spectral theory, Birman--Schwinger operator, Schatten--von Neumann ideal, 
spectral asymptotics, isoperimetric inequality, scattering matrix}


\begin{abstract}
The main objective of this paper is to systematically develop a spectral and scattering theory for selfadjoint Schr\"odin\-ger operators with $\delta$-interactions
supported on closed curves in $\R^3$. We provide bounds for the number of negative eigenvalues depending on the geometry of the curve,
prove an isoperimetric inequality for the principal eigenvalue, derive Schatten--von Neumann properties for the resolvent difference with the free Laplacian,
and establish an explicit representation for the scattering matrix.
\end{abstract}

\maketitle

\section{Introduction}

Schr\"odinger operators with singular interactions supported on sets of Lebes\-gue measure zero were suggested in the physics literature as solvable models in quantum mechanics
in~\cite{BP35, F36, KP31, LL61, T35}. They appear, e.g., in the modeling of zero-range interactions of quantum particles~\cite{CDFMT12,DFT94,M11,MF62}, in the theory of
photonic crystals~\cite{FK98}, and in quantum few-body systems in strong magnetic fields~\cite{BD06}. The mathematical investigation of their spectral and scattering
properties attracted a lot of attention during the last decades. First studies were mostly devoted to singular interactions supported on a discrete set of points,
see the monograph~\cite{AGHH} and~\cite[Chapter 5]{EH15}. Later on, singular interactions supported on more general curves, surfaces, and
manifolds gained much attention; there
is an extensive literature on Schr\"odinger operators with $\delta$-interactions supported on manifolds of codimension one, see, e.g,~\cite{AGS87,BLL13,BSS01,BEKS94,E08,EI01,EH15,EY01,EY02}
and the references therein.
Manifolds of higher codimension were first treated in \cite{BL77} in the very special case of an interaction supported on a straight line in~$\R^3$.
More general curves were considered in~\cite{BDE03, BT92, EF07, EK02, EK04, EK08, EK15, K12, K78, K83, P01, S95, T90}.

In the present paper we systematically develop a spectral and scattering theory for Schr\"odinger operators with singular interactions supported on curves in
the three-dimensional space. More specifically, for a compact, closed, regular $C^2$-curve $\Sigma \subset \R^3$ we consider
the selfadjoint Schr\"odinger operator $\Op$ in $L^2(\R^3)$, which corresponds to the formal differential expression
\begin{equation}\label{nu}
 -\Delta - \frac{1}{\alpha} \delta(\cdot - \Sigma),
\end{equation}
where $\alpha \in \R\setminus \{0\}$ is the inverse strength of interaction. The mathematically rigorous definition of $\Op$ is more involved
than in the case of, e.g., a curve in $\R^2$ or a hypersurface in $\R^3$. For our purposes an explicit characterization of the domain and action of $\Op$
is essential; here the key difficulty is to define an appropriate generalized trace map for functions which are not sufficiently regular; see Section~\ref{sec2} for the details. 
Our method is strongly inspired by~\cite{S95} and the abstract concept of boundary triples \cite{BL07,BL12,BGP08,DM91,DM95}, and can also be viewed as a special case of
the more general approach in~\cite{P01} (see Example~3.5 therein); cf. \cite{BT92, EK02, EK15, T90} for equivalent alternative definitions.

The main results of this paper deal with spectral and scattering properties of $\Op$ and extend and complement
results in \cite{BT92,E05,EF07,EHL06,EK04,K12,S95}. First we verify that the operator $\Op$ is in fact selfadjoint; along with this,
in Theorem~\ref{thm_KreinscheFormel} we establish a Krein type formula for the resolvent difference of $\Op$ and the free Laplacian $\Opfree$. Using
this formula we show that the resolvent difference
\begin{equation}\label{resdiff}
 (\Op - \lambda)^{-1} - (\Opfree - \lambda)^{-1}, \quad \lambda\in\rho(\Op) \cap \rho (\Opfree),
\end{equation}
is compact; in particular, the essential spectrum of $\Op$ equals $[0, \infty)$. Moreover, we provide a Birman--Schwinger principle
for the negative eigenvalues of $\Op$ and employ this principle for a more detailed study of these eigenvalues. In fact, in Theorem~\ref{thm:numberEV} we
show that the negative spectrum is always finite and we prove upper and lower estimates for the number of negative eigenvalues, depending on the (inverse) strength
of interaction $\alpha$ and the geometry of the curve; these results complement the estimates in~\cite{BT92, EK04, K02, K12}. In the case that $\Sigma$ is a circle
our estimates lead to an explicit formula for the number of negative eigenvalues. As a further main result, in Theorem~\ref{thm:EVmaximizer}
we prove that amongst all curves of a fixed length the principle eigenvalue of $\Op$ is maximized by the circle.
With this result we give an affirmative answer to an open problem formulated in~\cite[Section~7.8]{E08}. Our proof is inspired by related
considerations for $\delta$-interactions supported on loops in the plane in~\cite{E05, EHL06}.

Another group of results focuses on a more detailed comparison of $\Op$ with the free Laplacian. From a careful analysis of the operators involved in
the Krein type resolvent formula we obtain an asymptotic upper bound for the singular values $s_1 (\lambda) \geq s_2 (\lambda) \geq \dots$ of the resolvent
difference~\eqref{resdiff} in Theorem~\ref{thm:ResDiff},
\begin{equation}\label{skbound}
 s_k(\lambda) = O\left(\frac{1}{k^2\ln k}\right)\quad \text{as} \quad k \rightarrow + \infty.
\end{equation}
In particular, the resolvent difference in~\eqref{resdiff} belongs to the Schatten--von Neumann class $\mathfrak{S}_p$ for any $p > 1/2$;
this improves the trace class estimate in~\cite{BT92} and is in accordance with a previous observation for periodic curves in~\cite[Remark 4.1]{EF07}. Note
that, as a consequence of~\eqref{skbound}, the absolutely continuous spectrum of $\Op$ equals $[0,\infty)$ and the wave
operators for the scattering pair $\{\Opfree, \Op \}$ exist and are complete. In Theorem~\ref{nrthm} a representation of the associated scattering matrix is
given in terms of an explicit operator function which acts in $L^2 (\Sigma)$; this complements earlier investigations in \cite[Section 3]{BT92}.
Its proof relies on an abstract approach developed recently in~\cite{BMN15}.

The paper is organized as follows. In Section~\ref{sec2} we discuss in detail the mathematically rigorous definition of
the operator $\Op$. Section~\ref{secMainRes} contains all main results of this paper. Their proofs are carried out in the remainder of this paper.
In fact, Section~\ref{sec3} is preparatory and contains the analysis of the Birman--Schwinger operator.
The actual proofs of
Theorems~\ref{thm_KreinscheFormel}--\ref{nrthm} are contained in Section~\ref{sec4}. In a short appendix the
notions of quasi boundary triples and their Weyl functions from extension theory of symmetric operators are reviewed
and it is shown how the operators $\Opfree$ and $\Op$ fit into this abstract scheme.

\section*{Acknowledgements}
Jussi Behrndt, Christian K\"uhn, Vladimir Lotoreichik, and Jonathan Rohleder gratefully acknowledge financial support
by the Austrian Science Fund (FWF), project P~25162-N26. Vladimir Lotoreichik also acknowledges financial support
by the Czech Science Foundation, project 14-06818S. Rupert Frank acknowledges support through NSF grant DMS-1363432. 
The authors also wish to thank Johannes Brasche and Andrea Posilicano for helpful discussions 
and the anonymous referees for their helpful comments which led to various improvements.

\section{Definition of the operator \boldmath{$\Op$}}\label{sec2}

In this section we define the operator $\Op$ associated with the differential expression
\eqref{nu} in $L^2(\R^3)$. On a formal level we interpret the action of \eqref{nu} as 
\begin{align}\label{eq_cA_alpha}
 \cA_\alpha u := -\Delta u - \frac{1}{\alpha} u|_\Sigma \cdot \delta_\Sigma.
\end{align}
It will be shown that $\cA_\alpha$ gives rise to a selfadjoint operator in $L^2 (\R^3)$. The key difficulty in the definition of this operator is to specify a suitable domain.
Note that the Sobolev space $H^2 (\R^3)$ is not a suitable domain as $u|_\Sigma \cdot \delta_\Sigma\not\in L^2(\R^3)$ for all those $u\in H^2(\R^3)$ which
do not vanish identically on $\Sigma$. On the other hand, any proper subspace of $H^2 (\R^3)$ will turn out to be too small for $\Op$ to
become selfadjoint in $L^2(\R^3)$. Thus it is necessary to include suitable more singular elements in the domain of the operator. This requires the definition of
a generalized trace $u |_\Sigma$
for functions $u\in L^2 (\R^3)$ which are not sufficiently regular.

Let us first fix some notation. We assume that $\Sigma$ is a compact, closed, regular $C^2$-curve in $\R^3$ of length $L$ without 
self-intersections and that $\sigma : [0,L] \to \R^3$ is a $C^2$-parametrization of 
$\Sigma$ with $|\dot\sigma(s)| = 1$ for all $s\in[0,L]$. Occasionally we identify $\sigma$ with its $L$-periodic extension.
For $h\in L^2(\Sigma)$ we define the distribution $h\delta_\Sigma$ via
\begin{equation}\label{hdelta}
 \sk{h\delta_\Sigma,\p}_{-2, 2} = \int_\Sigma h(x)\overline{\p (x)}  d\sigma (x),\qquad \p\in H^2(\R^3),
\end{equation}
where $\p (x)$ is the evaluation of the continuous function $\p$ at $x 
\in \Sigma$, $\langle \cdot, \cdot \rangle_{-2, 2}$ denotes the duality between
$H^{-2} (\R^3)$ and $H^2 (\R^3)$, and $d \sigma$ denotes integration 
with respect to the arc length on~$\Sigma$. Note that it follows from 
the continuity of the restriction map $H^2 (\R^3) \ni \p \mapsto \p |_\Sigma \in L^2 (\Sigma)$ (see, e.g., \cite[Theorem 24.3]{BIN79}) that 
$h\delta_\Sigma\in H^{-2}(\R^3)$ and that $h\mapsto h\delta_\Sigma$ is a continuous mapping from $L^2(\Sigma)$ to $H^{-2}(\R^3)$. 
We will often use that $h\delta_\Sigma\in L^2(\R^3)$ if and only if $h=0$.

For $\lambda < 0$ we define the bounded operator
\begin{equation}\label{gammaop}
 \gamma_\lambda:L^2(\Sigma)\rightarrow L^2(\R^3),\qquad h\mapsto
\gamma_\lambda h=(-\Delta-\lambda)^{-1} (h\delta_\Sigma),
\end{equation}
where $-\Delta - \lambda$ is viewed as an isomorphism between $L^2(\R^3)$ and $H^{-2}(\R^3)$. In the following lemma useful 
representations of $\gamma_\lambda$ and its adjoint $\gamma_\lambda^* : L^2 (\R^3) \to L^2 (\Sigma)$ are provided. We denote 
the selfadjoint Laplacian in $L^2(\R^3)$ with domain $H^2(\R^3)$ by $\Opfree$.

\begin{lem}\label{Lemma_gammafield}
Let $\lambda < 0$. Then
\begin{equation}\label{gamma2}
 (\gamma_\lambda h) (x) = \int_\Sigma h(y)\frac{e^{-\sqrt{- \lambda}\vert x - y\vert}}{4\pi \vert x - y\vert} \, d \sigma (y)
\end{equation}
holds for almost all $x \in \R^3$ and all $h\in L^2(\Sigma)$. Moreover,
\begin{equation}\label{goodtohave}
   \gamma_\lambda^* u= \big( (\Opfree-\lambda)^{-1} u \big)|_\Sigma
\end{equation}
holds for all $u \in L^2 (\R^3)$.
\end{lem}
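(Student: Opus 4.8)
The plan is to establish the two identities separately: \eqref{gamma2} will follow from the fact that $G_\lambda(x):=\tfrac{1}{4\pi|x|}\,e^{-\sqrt{-\lambda}\,|x|}$ is the fundamental solution of $-\Delta-\lambda$ in $\R^3$, while \eqref{goodtohave} will follow directly from the definition of the adjoint together with \eqref{hdelta}, without even invoking \eqref{gamma2}.

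For \eqref{gamma2} I first note that $G_\lambda\in L^1(\R^3)\cap L^2(\R^3)$, since $G_\lambda(x)\le c\,|x|^{-1}$ near the origin (which is square integrable in three dimensions) and $G_\lambda$ decays exponentially at infinity. For $h\in L^2(\Sigma)$ I set
\[
 v(x):=\int_\Sigma h(y)\,G_\lambda(x-y)\, d\sigma(y),
\]
and Minkowski's integral inequality together with the compactness of $\Sigma$ gives $\|v\|_{L^2(\R^3)}\le \|G_\lambda\|_{L^2(\R^3)}\|h\|_{L^1(\Sigma)}\le \sqrt{L}\,\|G_\lambda\|_{L^2(\R^3)}\|h\|_{L^2(\Sigma)}$, so $v\in L^2(\R^3)$. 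In view of \eqref{gammaop} it then remains to check that $(-\Delta-\lambda)v=h\delta_\Sigma$ in $H^{-2}(\R^3)$. To this end I fix $\p\in H^2(\R^3)$; by the Sobolev embedding $H^2(\R^3)\hookrightarrow C(\R^3)$ the function $\p$ has a continuous representative, and using the selfadjointness of $-\Delta-\lambda$ (and density of $H^2(\R^3)$ in $L^2(\R^3)$), Fubini's theorem (legitimate since $G_\lambda\in L^2(\R^3)$, $(-\Delta-\lambda)\p\in L^2(\R^3)$ and $h\in L^1(\Sigma)$ by compactness of $\Sigma$), and the fact that $G_\lambda$ is even, I obtain
\[
 \sk{(-\Delta-\lambda)v,\p}_{-2,2}
 =\big(v,(-\Delta-\lambda)\p\big)_{L^2(\R^3)}
 =\int_\Sigma h(y)\,\overline{\big((-\Delta-\lambda)^{-1}(-\Delta-\lambda)\p\big)(y)}\, d\sigma(y).
\]
Because $\p$ is continuous, the resolvent identity $\big((-\Delta-\lambda)^{-1}(-\Delta-\lambda)\p\big)(y)=\p(y)$ holds pointwise for every $y$, so the right-hand side equals $\int_\Sigma h(y)\overline{\p(y)}\,d\sigma(y)=\sk{h\delta_\Sigma,\p}_{-2,2}$ by \eqref{hdelta}. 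As $\p\in H^2(\R^3)$ was arbitrary, $(-\Delta-\lambda)v=h\delta_\Sigma$, hence $v=\gamma_\lambda h$, which is \eqref{gamma2}.

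For \eqref{goodtohave}, fix $u\in L^2(\R^3)$ and $h\in L^2(\Sigma)$, and write $R_\lambda:=(\Opfree-\lambda)^{-1}$, regarded both as a bounded operator on $L^2(\R^3)$ and as the induced isomorphism $H^{-2}(\R^3)\to H^2(\R^3)$. Since $-\Delta-\lambda\colon L^2(\R^3)\to H^{-2}(\R^3)$ is an isomorphism, every $\psi\in H^{-2}(\R^3)$ has the form $\psi=(-\Delta-\lambda)w$ with $w\in L^2(\R^3)$, and then, by the definition of the action of $-\Delta-\lambda$ in the $H^{-2}(\R^3)$--$H^2(\R^3)$ duality,
\[
 \big((-\Delta-\lambda)^{-1}\psi,u\big)_{L^2(\R^3)}
 =(w,u)_{L^2(\R^3)}
 =\big(w,(-\Delta-\lambda)R_\lambda u\big)_{L^2(\R^3)}
 =\sk{(-\Delta-\lambda)w,R_\lambda u}_{-2,2}
 =\sk{\psi,R_\lambda u}_{-2,2}.
\]
Applying this with $\psi=h\delta_\Sigma$ and using \eqref{gammaop} and \eqref{hdelta} yields
\[
 \big(\gamma_\lambda h,u\big)_{L^2(\R^3)}
 =\sk{h\delta_\Sigma,R_\lambda u}_{-2,2}
 =\int_\Sigma h(y)\,\overline{(R_\lambda u)(y)}\, d\sigma(y)
 =\big(h,(R_\lambda u)|_\Sigma\big)_{L^2(\Sigma)},
\]
where $(R_\lambda u)|_\Sigma$ is meaningful because $R_\lambda u\in H^2(\R^3)\hookrightarrow C(\R^3)$. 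Since $h\in L^2(\Sigma)$ was arbitrary this gives $\gamma_\lambda^* u=(R_\lambda u)|_\Sigma$, i.e.\ \eqref{goodtohave}.

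I expect the only genuinely delicate point to be in the derivation of \eqref{gamma2}: one must verify that $v$ really lies in $L^2(\R^3)$ rather than merely in $L^1_{\mathrm{loc}}(\R^3)$, justify the interchange of integrations, and use the Sobolev embedding so that the fundamental-solution identity for $G_\lambda$ can be read off pointwise on $\Sigma$ instead of just almost everywhere. Everything else is bookkeeping with the duality between $H^{-2}(\R^3)$ and $H^2(\R^3)$.
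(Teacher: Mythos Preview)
Your proof is correct and uses the same ingredients as the paper's: the $H^{-2}$--$H^2$ duality, the definition \eqref{hdelta}, and the explicit Green kernel of $(-\Delta-\lambda)^{-1}$. The only organizational difference is that the paper handles both \eqref{gamma2} and \eqref{goodtohave} in a single chain of equalities
\[
\langle\gamma_\lambda h,u\rangle_{L^2(\R^3)}=\langle h\delta_\Sigma,(-\Delta-\lambda)^{-1}u\rangle_{-2,2}=\int_\Sigma h(y)\overline{\big((\Opfree-\lambda)^{-1}u\big)(y)}\,d\sigma(y)=\int_{\R^3}\int_\Sigma h(y)\,G_\lambda(x-y)\,d\sigma(y)\,\overline{u(x)}\,dx,
\]
reading off \eqref{goodtohave} from the middle line and \eqref{gamma2} from the last, whereas you prove \eqref{gamma2} by first writing down the candidate $v$ and then verifying that it solves $(-\Delta-\lambda)v=h\delta_\Sigma$. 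Your route is marginally longer but has the virtue of making the technical justifications ($v\in L^2(\R^3)$, Fubini, pointwise versus a.e.\ via the Sobolev embedding) fully explicit; the paper's route is shorter and shows that the two identities are really a single duality statement.
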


\begin{proof}
For $h \in L^2 (\Sigma)$ and $u\in L^2(\R^3)$ we have
\begin{align*}
  \sk{ \gamma_\lambda h , u }_{L^2(\R^3)}
  &=
  \sk{ \gamma_\lambda h ,
  (\Opfree-\lambda)(\Opfree - \lambda)^{-1} u
  }_{L^2(\R^3)}  \\
  &=
  \sk{ (-\Delta-\lambda) (\gamma_\lambda h) , (-\Delta -
  \lambda)^{-1} u }_{-2,2}  \\
  &=
  \sk{ h\delta_\Sigma , (-\Delta - \lambda)^{-1} u }_{-2,2}  \\
  & = \int_\Sigma h (y) \overline{\big( (\Opfree - \lambda)^{-1} u \big) (y)} d \sigma (y) \\
  &=
  \int_{\R^3} \int_\Sigma h (y) \frac{e^{-\sqrt{- \lambda}|x - y|}}{4\pi|x - y|} d \sigma(y) \overline{u(x)} d x,
\end{align*}
where we have used \eqref{hdelta} and the integral representation of $(\Opfree - \lambda)^{-1}$, see, e.g.,~\cite[(IX.30)]{RS75}. This proves both~\eqref{gamma2} and~\eqref{goodtohave}.
\end{proof}

The identity~\eqref{gamma2} indicates that in general the trace of $\gamma_\lambda h$ on $\Sigma$ does not exist due to the singularity of the integral kernel. This motivates the following regularization. Here and in the following we denote by $C^{0, 1} (\Sigma)$ the space of all complex-valued Lipschitz continuous functions on $\Sigma$. Moreover, for $x = \sigma(s_0) \in \Sigma$ and $\delta > 0$ let 
\begin{align}\label{eq:IDeltaSigma}
 I_\delta^\Sigma (x) = \{\sigma(s):s\in(s_0-\delta, s_0+\delta)\}
\end{align}
be the open interval in $\Sigma$ with center $x$ and length $2 \delta$. In order to define the trace of $\gamma_\lambda h$ in a generalized sense, for $\lambda \leq 0$, 
$h \in C^{0, 1} (\Sigma)$ and $x\in\Sigma$ we set
\begin{align}\label{blambdadef}
 (B_\lambda h)(x) = \lim_{\delta \searrow 0} \bigg[ \int_{\Sigma\setminus I_\delta^\Sigma(x)} h(y) \frac{e^{-\sqrt{-\lambda}|x - y|}}{4\pi|x - y|} \;d\sigma(y) + h(x) 
 \frac{\ln \delta }{2\pi} \bigg];
\end{align}
due to technical reasons the case $\lambda = 0$ is included here although $\gamma_\lambda$ is  defined for $\lambda < 0$ only. It will be shown in Proposition~\ref{Proposition_Main} that $B_\lambda$ is a well-defined, essentially selfadjoint operator in $L^2 (\Sigma)$ for each $\lambda \leq 0$ and that the domain of its 
closure $\overline{B_\lambda}$ is independent of $\lambda$. 
Note that the basic idea in the definition of $B_\lambda$ is to remove the singularity of $\gamma_\lambda h$ on $\Sigma$.
We remark that the limit in the definition of $B_\lambda$
can also be viewed as the finite part in the sense of Hadamard
of the first summand  as $\delta\searrow 0$; cf.~\cite[Chapter 5]{McL}.
A procedure of this type is frequently employed to define hypersingular integral operators.

With the help of $B_\lambda$ we can make the following definition.

\begin{defn}\label{def:genTrace}
Let $\lambda < 0$. For $h \in \dom \overline{B_\lambda}$ we define the generalized trace $(\gamma_\lambda h) \vert_\Sigma$ of $\gamma_\lambda h$ on $\Sigma$ by
\begin{equation*}
   (\gamma_\lambda h)\vert_\Sigma = \overline{B_\lambda} h \in L^2(\Sigma),\qquad h\in\dom
\overline{B_\lambda}.
\end{equation*}
Accordingly, for a function $u = u_c + \gamma_\lambda h$ with $u_c \in H^2(\R^3)$ and $h \in \dom \overline{B_\lambda}$ we define its generalized trace $u |_\Sigma$ on $\Sigma$ by
\begin{equation}\label{traceudef}
  u \vert_\Sigma = u_c |_\Sigma + (\gamma_\lambda h) |_\Sigma = u_c \vert_\Sigma + \overline{B_\lambda} h.
\end{equation}
\end{defn}

Note that $u\vert_\Sigma$ is well-defined. Indeed, the representation 
of~$u$ as a sum is unique since
$\gamma_\lambda h\in H^2(\R^3)$ implies $h=0$. Moreover, the definition 
of $u|_\Sigma$ is independent of the choice of
$\lambda < 0$; cf. Section~\ref{subsec:Well-definedness}.

Furthermore, note that the expression $\cA_\alpha$ in \eqref{eq_cA_alpha} is no longer formal,
but makes sense as we have defined the generalized trace $u|_\Sigma$.
Now we are able to define the Schr\"{o}dinger operator
$\Op$ corresponding to the differential expression
in~\eqref{nu} in a rigorous way.

\begin{defn}\label{schdef}
For $\alpha\in\R\setminus\{0\}$ the Schr\"{o}dinger operator $\Op$ in $L^2 (\R^3)$ with $\delta$-interaction 
of strength $\frac{1}{\alpha}$ supported on $\Sigma$ is defined by
\begin{equation*}
\begin{split}
 \Op u & = \cA_\alpha u= - \Delta u - \frac{1}{\alpha} u|_{\Sigma} \cdot \delta_\Sigma,\\
 \dom(\Op)& = \bigl\{u=u_c+\gamma_\lambda h: u_c\in H^2(\R^3),\, h\in\dom \overline{B_\lambda},\, \cA_\alpha u \in L^2(\R^3)\bigr\},
\end{split}
\end{equation*}
where $\lambda<0$ is arbitrary and the generalized trace $u\vert_\Sigma$ is defined in~\eqref{traceudef}.
\end{defn}

Observe that the operator $\Op$ is well-defined since $\dom \overline{B_\lambda}$ and the trace $u\vert_\Sigma$ do not depend on the choice of $\lambda$. Note also that for $\alpha = +\infty$ we formally have 
\begin{equation*} 
 -\Delta_{\Sigma,+\infty}u=-\Delta u,\qquad \dom(-\Delta_{\Sigma,+\infty}) = H^2(\R^3),
\end{equation*}
so that the Schr\"{o}dinger operator with $\delta$-interaction of strength $0$ on $\Sigma$ coincides with the free Laplacian $\Opfree$; this will be made precise in Theorem~\ref{thm_KreinscheFormel}~(ii) below.

\begin{rem}\label{KonstUmrechnen}
The definition of $\Op$ relies on the generalized trace in Definition~\ref{def:genTrace} and, thus, on the operator $B_\lambda$. 
As mentioned above, the operator $B_\lambda$ is designed in such a way that the singularity of $\gamma_\lambda h$ on $\Sigma$ is removed; this is done here 
by the term $\frac{\ln \delta }{2\pi}$. However, an alternative choice $\frac{\ln \delta }{2\pi} + c$ with an arbitrary $\delta$-independent constant $c\in\R$ can be made. 
This leads to a different operator $\Op$, which can be transformed into the operator in Definition~\ref{schdef} by adding the same constant $c$ to $\alpha$.
For instance, for $c = - \frac{\ln2}{2\pi}$ one obtains the family of operators considered in~\cite{T90}.
\end{rem}

\begin{rem}
	For a function
	$u = u_{\rm c} + \gamma_\lambda h\in \dom(-\Delta_{\Sigma,\alpha})$
	with $h\in C^{0,1}(\Sigma)$ we denote by  
	$\widehat{u}(s,\delta)$, $s\in[0,L)$, the mean value of $u$ over a circle of 
	a sufficiently small radius $\delta > 0$ centered at $\sigma(s)$
	and being orthogonal to $\Sigma$ in $\sigma(s)$. 
	According to~\cite[Remark 3]{T90} (see also~\cite{EF07, EK02}) the 
	functions
	\[
		h_0(s) 
		:= 2\pi\lim_{\delta\searrow 0} \frac{\widehat{u}(s,\delta)}{\ln(1/\delta)}
		\quad\text{and}\quad
		h_1(s) :=
		\lim_{\delta\searrow 0}\left[\widehat{u}(s,\delta) - 
		\frac{h_0(s)}{2\pi}\ln\bigg(\frac{1}{\delta}\bigg) \right]
	\]
	are well-defined and continuous on $\Sigma$
	and the function $u$
	satisfies the following boundary condition
	\[
		h_1(s) = \bigg(\alpha + \frac{\ln 2}{2\pi }\bigg) h_0(s).
	\]
	In many-body physics with zero-range interactions
	a boundary condition of this type
	is known as Skorniakov--Ter-Martirosian condition; see~\cite{ST56}
	and also~\cite{CDFMT12, MO16}.
\end{rem}

\section{Main results}\label{secMainRes}

In this section we present all main results of this paper. It will be shown that $\Op$
is selfadjoint and its spectral and scattering properties will be analyzed. This section is focused on the main statements and does not contain their proofs; these are postponed to 
Section~\ref{sec4} below. In the following we denote by $\sigma_{\rm p} (\Op)$, $\sigma_{\rm ess} (\Op)$, and $\rho (\Op)$ the point spectrum, essential spectrum, and resolvent set of $\Op$, respectively. 

In the first theorem we check that $\Op$ is a selfadjoint operator in $L^2(\R^3)$, 
prove a Birman--Schwinger principle for its negative eigenvalues and compare its resolvent to the resolvent of the free Laplacian $\Opfree$ 
in a Krein type formula, which also implies that the difference of the resolvents is compact.

\begin{thm}\label{thm_KreinscheFormel}
The Schr\"{o}dinger operator $\Op$ in $L^2(\R^3)$ in 
Definition~\ref{schdef} is selfadjoint. Moreover, the following assertions hold.
\begin{enumerate}
 \item For each $\lambda < 0$ the operator $\gamma_\lambda$ is an isomorphism between
  $\ker(\alpha - \overline{B_\lambda})$ and $\ker(\Op-\lambda)$. In particular, for each $\lambda < 0$
 \begin{align*}
  \lambda\in \sigma_{\rm p} (\Op) \quad \text{if and only if}
  \quad \alpha \in \sigma_{\rm p} ( \overline{B_\lambda}).
 \end{align*}
 \item The set $\rho(\Op) \cap (- \infty, 0)$ is nonempty and for each $\lambda \in \rho(\Op) \cap (- \infty, 0)$ the resolvent formula
 \begin{align}\label{KreinscheFormel}
  (\Op - \lambda)^{-1} = (\Opfree - \lambda)^{-1} + \gamma_{\lambda} \bigl(\alpha - \overline{B_\lambda}\,\bigr)^{-1} \gamma_{\lambda}^*
\end{align}
is valid. Furthermore, $\Op$ converges to $\Opfree$ in the norm resolvent sense as $\alpha\to + \infty$.
 \item For each $\lambda \in \rho (\Op) \cap \rho (\Opfree)$ the resolvent difference
 \begin{align}\label{eq:ResDiff}
  (\Op - \lambda)^{-1} - (\Opfree - \lambda)^{-1}
\end{align}
is compact and, in particular, $\sigma_{\rm ess} (\Op) = [0, \infty)$.
\end{enumerate}
\end{thm}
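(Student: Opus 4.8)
The plan is to establish the three assertions more or less in the order stated, using the abstract machinery of quasi boundary triples (reviewed in the appendix) together with the explicit operators $\gamma_\lambda$ and $B_\lambda$ introduced in Section~\ref{sec2}. The starting point is that $(\Opfree,\gamma_\lambda,B_\lambda)$ — more precisely, the symmetric operator $\Opfree\upharpoonright\{u\in H^2(\R^3): u|_\Sigma=0\}$ together with the trace maps encoded by $\gamma_\lambda$ and $\overline{B_\lambda}$ — forms a quasi boundary triple whose Weyl function is (up to sign conventions) $\lambda\mapsto\overline{B_\lambda}$. Granting that $\Op$ is precisely the extension corresponding to the boundary condition $\alpha\,u|_\Sigma = (\partial_\Sigma u)$ in the abstract sense, selfadjointness of $\Op$ will follow from the abstract criterion for quasi boundary triples: since $\alpha\in\R$ is a real parameter and $\overline{B_\lambda}$ is selfadjoint in $L^2(\Sigma)$ for $\lambda\le 0$ (Proposition~\ref{Proposition_Main}), the operator $\alpha-\overline{B_\lambda}$ is selfadjoint, and one needs to verify that $\alpha-\overline{B_\lambda}$ is boundedly invertible for at least one (hence, by analytic Fredholm arguments, generically all) $\lambda<0$ in order to conclude that the associated extension is selfadjoint and that $\lambda$ lies in its resolvent set. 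This last invertibility is the main technical input and is exactly the content that makes assertion (ii) non-vacuous.

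For assertion (i), the Birman--Schwinger correspondence is essentially built into the structure: if $u=u_c+\gamma_\lambda h\in\dom(\Op)$ with $\Op u=\lambda u$, then applying $(\Opfree-\lambda)$ to the identity $u = (\Opfree-\lambda)^{-1}(\lambda u) + \gamma_\lambda h$-type relation and matching the distributional parts shows $h\in\ker(\alpha-\overline{B_\lambda})$; conversely, given $h\in\ker(\alpha-\overline{B_\lambda})$, the function $\gamma_\lambda h$ is a nontrivial $L^2(\R^3)$ eigenfunction at $\lambda$ because $\gamma_\lambda h\in H^2(\R^3)$ would force $h=0$ (a fact already noted after Definition~\ref{def:genTrace}). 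Injectivity of $\gamma_\lambda$ gives the isomorphism. First I would make this precise, then read off the stated equivalence $\lambda\in\sigma_{\rm p}(\Op)\iff\alpha\in\sigma_{\rm p}(\overline{B_\lambda})$, noting that $\gamma_\lambda h$ cannot lie in $H^2$ unless $h=0$ so that point eigenvalues are genuinely detected.

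For assertion (ii), the Krein formula \eqref{KreinscheFormel} is the standard resolvent identity for quasi boundary triples: for $\lambda\in\rho(\Op)\cap(-\infty,0)$ one checks directly that the right-hand side maps $L^2(\R^3)$ into $\dom(\Op)$ and that $(\Op-\lambda)$ applied to it yields the identity — the key cancellations being $(\Opfree-\lambda)(\gamma_\lambda g\text{-part})$ producing the $\delta_\Sigma$-term that is absorbed by the boundary condition $\alpha\,(\cdot)|_\Sigma=(\partial_\Sigma\cdot)$, together with \eqref{goodtohave} identifying $\gamma_\lambda^*$ with the boundary trace of the free resolvent. The norm resolvent convergence $\Op\to\Opfree$ as $\alpha\to+\infty$ then follows because $\|\gamma_\lambda(\alpha-\overline{B_\lambda})^{-1}\gamma_\lambda^*\|\le\|\gamma_\lambda\|^2\,\|(\alpha-\overline{B_\lambda})^{-1}\|$ and, for $\lambda$ sufficiently negative, $\|(\alpha-\overline{B_\lambda})^{-1}\|\to 0$ as $\alpha\to+\infty$ (since $\overline{B_\lambda}$ is a fixed selfadjoint operator, $\|(\alpha-\overline{B_\lambda})^{-1}\|\le(\mathrm{dist}(\alpha,\sigma(\overline{B_\lambda})))^{-1}\to 0$). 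The hard part is, once more, securing that $\rho(\Op)\cap(-\infty,0)$ is nonempty, i.e. that $\alpha-\overline{B_\lambda}$ is boundedly invertible for some $\lambda<0$; I would obtain this by showing that, as $\lambda\to-\infty$, the operator $\overline{B_\lambda}$ tends to $-\infty$ in an appropriate sense — concretely $B_\lambda\le -c\sqrt{-\lambda}$ or a similar lower-order bound derived from the explicit kernel $\frac{e^{-\sqrt{-\lambda}|x-y|}}{4\pi|x-y|}$ and the $\frac{\ln\delta}{2\pi}$ regularization in \eqref{blambdadef} — so that $\alpha-\overline{B_\lambda}$ becomes uniformly positive and boundedly invertible for $-\lambda$ large.

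Finally, for assertion (iii), compactness of the resolvent difference \eqref{eq:ResDiff} at $\lambda\in\rho(\Op)\cap(-\infty,0)$ is immediate from \eqref{KreinscheFormel}: the difference equals $\gamma_\lambda(\alpha-\overline{B_\lambda})^{-1}\gamma_\lambda^*$, and $\gamma_\lambda$ is compact from $L^2(\Sigma)$ to $L^2(\R^3)$ because it factors through the smoothing operator $(\Opfree-\lambda)^{-1}$ and the compact embedding effects associated with the lower-dimensional support $\Sigma$ — alternatively, from \eqref{gamma2} the operator $\gamma_\lambda$ maps into $H^1_{\rm loc}$ with decay, hence is compact; $(\alpha-\overline{B_\lambda})^{-1}$ is bounded by (ii). For general $\lambda\in\rho(\Op)\cap\rho(\Opfree)$ one extends by the resolvent identity: writing $(\Op-\lambda)^{-1}-(\Opfree-\lambda)^{-1} = (1+(\lambda-\lambda_0)(\Op-\lambda)^{-1})\big[(\Op-\lambda_0)^{-1}-(\Opfree-\lambda_0)^{-1}\big](1+(\lambda-\lambda_0)(\Opfree-\lambda)^{-1})$ for a fixed $\lambda_0\in\rho(\Op)\cap(-\infty,0)$ shows the difference remains compact. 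Since the free Laplacian has $\sigma_{\rm ess}(\Opfree)=[0,\infty)$ and compact resolvent perturbations preserve the essential spectrum (Weyl's theorem), $\sigma_{\rm ess}(\Op)=[0,\infty)$ follows. The one genuine obstacle throughout is the invertibility of $\alpha-\overline{B_\lambda}$ for large $-\lambda$, which rests on a careful large-parameter analysis of the singular integral operator $B_\lambda$ on the curve; everything else is bookkeeping within the quasi boundary triple framework.
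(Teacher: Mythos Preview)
Your proposal is correct and follows essentially the same route as the paper. Two small points where your sketch is slightly off from what actually works: first, the upper bound on $\overline{B_\lambda}$ is not of order $-c\sqrt{-\lambda}$ but only logarithmic in $\sqrt{-\lambda}$ (this comes out of the $\frac{\ln\delta}{2\pi}$ regularization in \eqref{blambdadef}; see Lemma~\ref{Lemma_klambda} and estimate~\eqref{eq:estimate99}), which is still enough to make $\alpha-\overline{B_\lambda}$ positive and boundedly invertible for $-\lambda$ large. Second, for compactness of the resolvent difference the paper argues via $\gamma_\lambda^*$ rather than $\gamma_\lambda$: from \eqref{goodtohave} one has $\gamma_\lambda^* u = ((\Opfree-\lambda)^{-1}u)|_\Sigma$, which is bounded from $L^2(\R^3)$ into $H^1(\Sigma)$ and hence compact into $L^2(\Sigma)$ by the compact embedding $H^1(\Sigma)\hookrightarrow L^2(\Sigma)$. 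Everything else in your outline---the Birman--Schwinger argument for (i), the direct verification of the Krein formula for (ii), the norm-resolvent convergence via $\|(\alpha-\overline{B_\lambda})^{-1}\|\le (\alpha-\nu_1(\lambda))^{-1}$, and the resolvent-identity sandwich for extending compactness to all $\lambda\in\rho(\Op)\cap\rho(\Opfree)$---matches the paper's proof.
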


Next we investigate the resolvent difference of $\Op$ and the free Laplacian in more detail.

\begin{thm}\label{thm:ResDiff}
Let $s_1 (\lambda) \geq s_2 (\lambda) \geq \dots$ be the singular values  of the resolvent difference of $\Op$ and $\Opfree$ in~\eqref{eq:ResDiff}, counted with multiplicities. Then
\begin{align*}
  s_k (\lambda) = O \Big(\frac{1}{k^2\ln k} \Big) \quad \text{as} \quad k\to + \infty.
\end{align*}
In particular,~\eqref{eq:ResDiff} belongs to the Schatten--von Neumann ideal $\mathfrak{S}_p (L^2(\R^3))$ for each $p > 1/2$. 
\end{thm}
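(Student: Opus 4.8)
The plan is to exploit the Krein type resolvent formula \eqref{KreinscheFormel} together with a factorization of the correction term and known bounds on the singular values of the resolvent difference; more precisely, for $\lambda\in\rho(\Op)\cap(-\infty,0)$ we write
\[
 (\Op-\lambda)^{-1}-(\Opfree-\lambda)^{-1} = \gamma_\lambda\bigl(\alpha-\ov{B_\lambda}\bigr)^{-1}\gamma_\lambda^*
 = \bigl(\gamma_\lambda \langle \ov{B_0}\rangle^{-1}\bigr)\,\bigl(\langle\ov{B_0}\rangle(\alpha-\ov{B_\lambda})^{-1}\langle\ov{B_0}\rangle\bigr)\,\bigl(\langle\ov{B_0}\rangle^{-1}\gamma_\lambda^*\bigr),
\]
where $\langle\ov{B_0}\rangle = (I+\ov{B_0}^2)^{1/2}$ (or any convenient positive reference operator built from $\ov{B_0}$). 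Since $\dom\ov{B_\lambda}=\dom\ov{B_0}$ is $\lambda$-independent and $\alpha\notin\sigma_{\rm p}(\ov{B_\lambda})$ for our chosen $\lambda$, the middle factor $\langle\ov{B_0}\rangle(\alpha-\ov{B_\lambda})^{-1}\langle\ov{B_0}\rangle$ is a bounded operator in $L^2(\Sigma)$ by the closed graph theorem. Hence the singular value asymptotics of the resolvent difference are governed entirely by the singular values of the outer factor $T_\lambda := \langle\ov{B_0}\rangle^{-1}\gamma_\lambda^*$, because $s_k$ of a product $AXB$ with $X$ bounded is dominated by $\|X\|\,s_k(A)$ (and here $A$, $B$ are adjoints of one another, so $s_k$ of the whole thing is $O(s_k(T_\lambda)^2)$ up to the constant $\|X\|$).

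Next I would identify $\gamma_\lambda^*$ explicitly. By \eqref{goodtohave}, $\gamma_\lambda^* u = \bigl((\Opfree-\lambda)^{-1}u\bigr)|_\Sigma$, i.e.\ $\gamma_\lambda^*$ is the composition of the smoothing operator $(\Opfree-\lambda)^{-1}\colon L^2(\R^3)\to H^2(\R^3)$ with the restriction $H^2(\R^3)\to H^{3/2}(\Sigma)$ (trace theorem; the trace of an $H^2(\R^3)$-function on a $C^2$-curve of codimension two lies in $H^{3/2}(\Sigma)$ — this is the crucial regularity input and the numerology of $3/2$ is exactly what produces the $k^{-2}\ln k$ behaviour below). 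Thus $T_\lambda$ maps $L^2(\R^3)$ boundedly into $H^s(\Sigma)$ for some $s$ (at least $3/2$, perhaps $2$ depending on how much smoothing of $\langle\ov B_0\rangle^{-1}$ one uses — but $3/2$ is already enough). Equivalently, $T_\lambda T_\lambda^* = \langle\ov{B_0}\rangle^{-1}\gamma_\lambda^*\gamma_\lambda\langle\ov{B_0}\rangle^{-1}$ is a pseudodifferential-type operator on the one-dimensional compact manifold $\Sigma$ whose order/mapping properties I would pin down: $\gamma_\lambda^*\gamma_\lambda = \ov{B_\lambda}$-related — in fact from the computation in Lemma~\ref{Lemma_gammafield} one checks $\gamma_\lambda^*\gamma_\lambda$ has the same weakly singular kernel $\tfrac{e^{-\sqrt{-\lambda}|x-y|}}{4\pi|x-y|}$ restricted to $\Sigma\times\Sigma$, which on a curve behaves like a logarithmically smoothing operator, i.e.\ of ``order $-\varepsilon$'' for every $\varepsilon$, but more sharply maps $L^2(\Sigma)\to H^{3/2}(\Sigma)$ after one application of the square root.

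Then the singular value bound follows from Weyl-type eigenvalue asymptotics for positive operators on the one-dimensional manifold $\Sigma$: if an operator maps $L^2(\Sigma)$ boundedly into $H^{3/2}(\Sigma)$, its $k$-th singular value is $O(k^{-3/2})$, and a genuinely logarithmically-smoothing kernel of the above type gains the extra $\ln k$, giving $s_k(\gamma_\lambda\langle\ov B_0\rangle^{-1}) = O\bigl(k^{-3/2}/\sqrt{\ln k}\bigr)$ (this is the $\dim\Sigma=1$, codimension-$2$ analogue of the hypersurface estimates, and is consistent with \cite[Remark 4.1]{EF07}); squaring and using the product estimate yields $s_k$ of the resolvent difference equal to $O(k^{-3}/\ln k)$ — wait, that overshoots; the correct accounting is that the trace map into $H^{3/2}(\Sigma)$ together with the $H^2$-smoothing of the resolvent gives exactly $s_k(T_\lambda)=O(k^{-(3/2+1/2)}(\ln k)^{-1/2})=O(k^{-2}(\ln k)^{-1/2})$ — no: the honest route is to invoke the precise asymptotics for the Birman--Schwinger operator $\ov{B_\lambda}$ established in Section~\ref{sec3}, namely that the eigenvalues of its ``regularized'' part decay like the eigenvalues of a logarithmic integral operator on a circle, whose $k$-th eigenvalue is $\sim c/(k\ln k)$ (or $\sim c\,k^{-1}$ with a logarithmic correction), and feed this into $s_k(\gamma_\lambda(\alpha-\ov B_\lambda)^{-1}\gamma_\lambda^*)\le \mathrm{const}\cdot s_{2k-1}(\gamma_\lambda)^2$, where $s_k(\gamma_\lambda)=O(k^{-1}(\ln k)^{-1/2})$. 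This forces the stated rate. Finally, the $\lambda$-independence of the conclusion for all $\lambda\in\rho(\Op)\cap\rho(\Opfree)$ (including $\lambda\ge 0$) follows from the resolvent identity: the difference at two spectral parameters is a rank-controlled, rapidly-decaying perturbation of the difference at a negative $\lambda$, specifically one multiplies by bounded resolvents and uses that $s_k$ is stable under such manipulations.

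The main obstacle is the sharp regularity statement for the trace of $H^2(\R^3)$-functions onto a codimension-two curve, together with the precise logarithmic gain in the singular value asymptotics of the weakly/log-singular integral operator $\gamma_\lambda^*\gamma_\lambda$ on $\Sigma$: getting the correct power $k^{-2}$ is routine Sobolev bookkeeping, but capturing the extra factor $(\ln k)^{-1}$ requires a genuine asymptotic analysis of the operator with logarithmic kernel $-\tfrac{1}{2\pi}\ln|x-y|$-type principal part on the one-dimensional manifold $\Sigma$ (equivalently, of the operator $\ov{B_\lambda}$ whose essential spectrum and eigenvalue asymptotics should already have been computed in Section~\ref{sec3}). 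I would therefore lean on the spectral analysis of $\ov{B_\lambda}$ carried out earlier in the paper, reducing Theorem~\ref{thm:ResDiff} to a clean application of the multiplicativity/monotonicity properties of singular values (the inequalities $s_{k+l-1}(A+B)\le s_k(A)+s_l(B)$ and $s_k(AB)\le \|A\|\,s_k(B)$) and of the $\mathfrak{S}_p$-ideal property, with the $\mathfrak{S}_p$ membership for $p>1/2$ being an immediate corollary of $\sum_k (k^{-2}\ln^{-1}k)^p<\infty$ precisely when $p>1/2$.
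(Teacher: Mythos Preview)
Your proposal contains a genuine error and never converges to a complete argument. The factorization you open with fails: the middle factor $\langle\ov{B_0}\rangle(\alpha-\ov{B_\lambda})^{-1}\langle\ov{B_0}\rangle$ is \emph{not} bounded. Since $\ov{B_\lambda}-\ov{B_0}$ is bounded, it suffices to test $\langle\ov{B_0}\rangle(\alpha-\ov{B_0})^{-1}\langle\ov{B_0}\rangle$ in the eigenbasis of $\ov{B_0}$; the diagonal entries are $(1+\nu_k(0)^2)/(\alpha-\nu_k(0))\sim -\nu_k(0)\sim \tfrac{1}{2\pi}\ln k\to\infty$. The closed graph theorem does not rescue this because the operator is simply unbounded. (A variant with $\langle\ov{B_0}\rangle^{1/2}$ on each side \emph{would} be bounded, and that repaired factorization would lead to the correct estimate --- but you never write this down.) A second error is the trace regularity: for a curve in $\R^3$ the codimension is $2$, so $H^2(\R^3)\hookrightarrow H^{2-2/2}(\Sigma)=H^1(\Sigma)$, not $H^{3/2}(\Sigma)$; the paper uses exactly $H^1(\Sigma)$ in the proof of Theorem~\ref{thm_KreinscheFormel}~(iii). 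Finally, the asymptotics you quote for $\ov{B_\lambda}$ are garbled: Proposition~\ref{Proposition_Main}~(iii) gives $\nu_k(\lambda)=-\tfrac{\ln k}{2\pi}+O(1)$, hence $s_k\bigl((\alpha-\ov{B_\lambda})^{-1}\bigr)=O(1/\ln k)$, not ``$\sim c/(k\ln k)$''. Your claim $s_k(\gamma_\lambda)=O(k^{-1}(\ln k)^{-1/2})$ is unjustified.

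The paper's argument is much more direct than anything you attempt. It applies the Ky Fan product inequality to the three-factor Krein formula \emph{as it stands}:
\[
 s_{3j-2}\bigl(\gamma_\lambda(\alpha-\ov{B_\lambda})^{-1}\gamma_\lambda^*\bigr)\le s_j(\gamma_\lambda)\,s_j\bigl((\alpha-\ov{B_\lambda})^{-1}\bigr)\,s_j(\gamma_\lambda^*).
\]
The two outer factors satisfy $s_j(\gamma_\lambda)=s_j(\gamma_\lambda^*)=O(1/j)$ because $\gamma_\lambda^*=\Lambda^{-1}(\Lambda\gamma_\lambda^*)$ with $\Lambda=(I-\Delta_{\rm LB}^\Sigma)^{1/2}$, where $\Lambda\gamma_\lambda^*\colon L^2(\R^3)\to L^2(\Sigma)$ is bounded (this is the $H^1(\Sigma)$ trace statement) and $s_j(\Lambda^{-1})=O(1/j)$ on the one-dimensional manifold $\Sigma$. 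The middle factor contributes $O(1/\ln j)$ by Proposition~\ref{Proposition_Main}~(iii). Multiplying gives $s_{3j-2}=O\bigl(1/(j^2\ln j)\bigr)$, and a reindexing yields $s_k=O\bigl(1/(k^2\ln k)\bigr)$. The extension from a fixed $\lambda_0<0$ to all $\lambda\in\rho(\Op)\cap\rho(\Opfree)$ via the resolvent identity is the one part of your sketch that is correct.
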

The logarithmic factor in the estimate for the singular values
in the above theorem is related to the fact
that the eigenvalues of $\ov{B_\lambda}$ behave asymptotically as $-\frac{\ln k}{2\pi}$,
see Proposition~\ref{Proposition_Main} (iii).

In the following theorem we show that the discrete spectrum of $\Op$ is always finite and give estimates for the number $N_\alpha$ of 
negative eigenvalues, counted with multiplicities. Let $R = \frac{L}{2 \pi}$ and define the intervals 
\begin{align*}
 I_{-1} = \bigg[ \frac{\ln(4 R)}{2\pi}, + \infty \bigg), \quad I_0 = \bigg[  \frac{\ln(4R)}{2\pi} - \frac{1}{\pi}, \frac{\ln(4R)}{2\pi} \bigg),
\end{align*}
and
\begin{align*}
 I_r = \bigg[  \frac{\ln(4R)}{2\pi} - \frac{1}{\pi} \sum_{j=1}^{r+1} \frac{1}{2j-1}, \frac{\ln(4R)}{2\pi} - \frac{1}{\pi} \sum_{j=1}^{r} \frac{1}{2j-1}  \bigg), \quad r = 1, 2, \dots,
\end{align*}
which are disjoint and satisfy $\R = \bigcup_{r = -1}^\infty I_r$. Moreover, set
\begin{align}\label{ddsigma}
 d_\Sigma = \int_0^L   \int_0^L \left| \frac{1}{4\pi|\sigma(t)-\sigma(s)|} - \frac{1}{4 \pi |\tau(t)-\tau(s)|}\right|^2   d s d t \geq 0,
\end{align}
where $\sigma$ is the parametrization of $\Sigma$ fixed in the beginning of Section~\ref{sec2} and $\tau$ denotes an arc length parametrization of a circle of radius $R$.

\begin{thm}\label{thm:numberEV}
Let $\alpha \neq 0$ and denote by $N_\alpha$ the number of negative eigenvalues of $\Op$, counted with multiplicities. If $\alpha - d_\Sigma \geq \frac{\ln (4 R)}{2 \pi}$ then $N_\alpha = 0$. Otherwise, 
 \begin{align*}
  2 r + 1 \leq N_\alpha \leq 2 l + 1,
 \end{align*}
where $r \geq -1$ and $l \geq 0$ are such that $\alpha + d_\Sigma \in I_r$ and $\alpha - d_\Sigma\in I_l$.
In particular, $N_\alpha$ is finite and the operator $\Op$
is bounded from below.
\end{thm}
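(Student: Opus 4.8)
The plan is to exploit the Birman--Schwinger principle from Theorem~\ref{thm_KreinscheFormel}~(i): for $\lambda<0$ one has $\lambda\in\sigma_{\rm p}(\Op)$ if and only if $\alpha\in\sigma_{\rm p}(\overline{B_\lambda})$, with matching multiplicities via the isomorphism $\gamma_\lambda$. Since $\overline{B_\lambda}$ is selfadjoint with compact resolvent and its eigenvalues move monotonically (decreasing) as $\lambda\to-\infty$ — a fact that should follow from the explicit form~\eqref{blambdadef} together with the analytic dependence established in Proposition~\ref{Proposition_Main} — the number $N_\alpha$ of negative eigenvalues of $\Op$ equals the number of eigenvalues of $\overline{B_0}$ that lie strictly below $\alpha$ (counted with multiplicity), once one checks that eigenvalue branches enter the negative axis precisely through $\lambda=0$ and none escape to $-\infty$ or accumulate. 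So the first step is this reduction: $N_\alpha = \#\{k : \mu_k(\overline{B_0}) < \alpha\}$, where $\mu_1\geq\mu_2\geq\cdots$ are the eigenvalues of $\overline{B_0}$.

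The second step is to compute, or at least sandwich, the eigenvalues of $\overline{B_0}$. Here one compares $\overline{B_0}$ with the corresponding operator $\overline{B_0^{\rm circ}}$ for a circle of the same length $L$, i.e.\ radius $R=L/(2\pi)$. For the circle the operator is a convolution on $L^2(\mathbb{T})$, so it is diagonalized by the Fourier basis and its eigenvalues are known explicitly: the zeroth mode gives $\frac{\ln(4R)}{2\pi}$ (this is the origin of the endpoints of $I_{-1}$ and $I_0$), and the $n$-th pair of modes gives $\frac{\ln(4R)}{2\pi}-\frac{1}{\pi}\sum_{j=1}^{n}\frac{1}{2j-1}$, each with multiplicity two — which is exactly how the intervals $I_r$ are defined, with $I_r$ being the gap between the $r$-th and $(r{+}1)$-th distinct eigenvalue of $\overline{B_0^{\rm circ}}$. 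The general curve differs from the circle by the integral operator with kernel $\frac{1}{4\pi|\sigma(t)-\sigma(s)|}-\frac{1}{4\pi|\tau(t)-\tau(s)|}$, which is Hilbert--Schmidt with Hilbert--Schmidt norm exactly $\sqrt{d_\Sigma}$ (the singularities cancel since both curves are arc-length parametrized $C^2$ curves). Hence $\|\overline{B_0}-\overline{B_0^{\rm circ}}\|\leq\|\cdot\|_{\rm HS}=\sqrt{d_\Sigma}$ — wait, more carefully, one needs the operator norm bound, which follows from $\|\cdot\|\leq\|\cdot\|_{\rm HS}$. Actually, since $d_\Sigma$ appears without a square root in the statement, one should instead observe that the perturbation operator has integral kernel whose absolute value, integrated, is controlled; more likely the intended bound is on the operator norm directly via $\|\cdot\|\le\|\cdot\|_{\rm HS}=\sqrt{d_\Sigma}$ and the quantity called $d_\Sigma$ in~\eqref{ddsigma} plays the role of the squared norm, so the relevant estimate is $\pm(\overline{B_0}-\overline{B_0^{\rm circ}})\leq\sqrt{d_\Sigma}$ — I will use whichever the earlier sections justify; in any case the $\min$--$\max$ principle gives $\mu_k(\overline{B_0^{\rm circ}})-d_\Sigma\leq\mu_k(\overline{B_0})\leq\mu_k(\overline{B_0^{\rm circ}})+d_\Sigma$ for every $k$ (reading $d_\Sigma$ as the operator-norm bound).

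The third step assembles the estimate. If $\alpha-d_\Sigma\geq\frac{\ln(4R)}{2\pi}=\mu_1(\overline{B_0^{\rm circ}})$, then $\alpha\geq\mu_1(\overline{B_0})$ — but one needs strict-versus-nonstrict care; the hypothesis $\alpha-d_\Sigma\geq\frac{\ln(4R)}{2\pi}$ gives $\alpha\geq\mu_1(\overline{B_0})$, and a separate argument (e.g.\ that $\alpha=\mu_1$ is a borderline case contributing $0$ negative eigenvalues, or a slightly sharper strict inequality) yields $N_\alpha=0$. In the remaining range: the upper bound uses $\mu_k(\overline{B_0})\geq\mu_k(\overline{B_0^{\rm circ}})-d_\Sigma$, so $\mu_k(\overline{B_0})<\alpha$ forces $\mu_k(\overline{B_0^{\rm circ}})<\alpha+d_\Sigma$; since $\alpha+d_\Sigma\in I_r$ means exactly $r+1$ distinct eigenvalues of the circle operator lie below $\alpha+d_\Sigma$ (the zeroth with multiplicity $1$ and the next $r$ with multiplicity $2$ when $r\geq1$, and appropriate bookkeeping for $r=-1,0$), this counts at most $2r+1$ indices $k$ — hence $N_\alpha\leq 2r+1$; but the statement has $N_\alpha\le 2l+1$ with $\alpha-d_\Sigma\in I_l$, so the roles are: $\mu_k(\overline{B_0})\le\mu_k(\overline{B_0^{\rm circ}})+d_\Sigma<\alpha$ would need $\mu_k(\overline{B_0^{\rm circ}})<\alpha-d_\Sigma$, giving the lower bound $N_\alpha\geq\#\{k:\mu_k(\overline{B_0^{\rm circ}})<\alpha-d_\Sigma\}$; symmetrically the upper bound comes from $\alpha+d_\Sigma$. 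One then reads off the interval membership to get $2r+1\le N_\alpha\le 2l+1$, and finiteness and semiboundedness of $\Op$ follow because only finitely many $\mu_k$ lie below any fixed $\alpha$ (the eigenvalues of $\overline{B_0}$ diverge to $-\infty$ like $-\frac{\ln k}{2\pi}$, Proposition~\ref{Proposition_Main}~(iii)).

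The main obstacle I anticipate is the first step — rigorously justifying that $N_\alpha$ equals the spectral counting function of $\overline{B_0}$ at $\alpha$. This requires monotonicity in $\lambda$ of the eigenvalue branches of $\overline{B_\lambda}$ (which needs a derivative computation or a Feynman--Hellmann-type argument using the explicit kernel) and a limiting argument at $\lambda=0$ linking $\overline{B_0}$ to the limit of $\overline{B_\lambda}$ as $\lambda\nearrow0$, plus control ensuring no eigenvalue branch of $\overline{B_\lambda}$ crosses $\alpha$ from below as $\lambda\to-\infty$. Secondary care is needed with strict versus non-strict inequalities at the interval endpoints, and with the precise multiplicity bookkeeping in the definitions of $I_{-1}$, $I_0$, $I_r$.
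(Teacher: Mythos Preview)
Your overall architecture matches the paper's proof exactly: reduce $N_\alpha$ via the Birman--Schwinger principle and the strict monotonicity of the eigenvalue branches $\lambda\mapsto\nu_k(\lambda)$ (Proposition~\ref{Proposition_Main}~(iv)) to a counting problem for the eigenvalues of $\overline{B_0}$, then sandwich those eigenvalues between the explicitly known circle eigenvalues $\nu_k^\T(0)$ using the perturbation $D_0$ (Lemma~\ref{lem:Dlambda}) and the min--max principle.

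There is, however, a genuine sign error that derails your third step. You write $N_\alpha=\#\{k:\mu_k(\overline{B_0})<\alpha\}$, but the correct identity is
\[
N_\alpha=\#\{k:\nu_k(0)>\alpha\}.
\]
Indeed, each branch $\nu_k(\cdot)$ is continuous, strictly increasing on $(-\infty,0]$, and tends to $-\infty$ as $\lambda\to-\infty$; hence it hits the level $\alpha$ at exactly one $\lambda_k<0$ if and only if it starts \emph{above} $\alpha$, i.e.\ $\nu_k(0)>\alpha$. (This is precisely what the paper states at the beginning of the proof.) With the wrong direction you would get $N_\alpha=\infty$ whenever $\alpha-d_\Sigma\ge\frac{\ln(4R)}{2\pi}$, the opposite of the claim. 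Once you fix the direction, the roles of the shifts sort themselves out: from $\nu_k(0)\le\nu_k^\T(0)+d_\Sigma$ one gets $\{\nu_k(0)>\alpha\}\subset\{\nu_k^\T(0)>\alpha-d_\Sigma\}$, and since $\alpha-d_\Sigma\in I_l$ means $\nu_{2l+2}^\T(0)\le\alpha-d_\Sigma<\nu_{2l+1}^\T(0)$, the right-hand set has exactly $2l+1$ elements, giving the upper bound $N_\alpha\le 2l+1$. The lower bound $N_\alpha\ge 2r+1$ comes symmetrically from $\nu_k(0)\ge\nu_k^\T(0)-d_\Sigma$ and $\alpha+d_\Sigma\in I_r$. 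Your confusion about ``which shift gives which bound'' is entirely an artifact of counting in the wrong direction.

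On your side remarks: the monotonicity and divergence you flag as the ``main obstacle'' are exactly the content of Proposition~\ref{Proposition_Main}~(iv), which is already available, so no separate Feynman--Hellmann argument is needed. As for $d_\Sigma$ versus $\sqrt{d_\Sigma}$: the paper records the bound as $\|D_0\|\le d_\Sigma$ in Lemma~\ref{lem:Dlambda} and uses it in exactly that form; your observation that the Hilbert--Schmidt argument naturally produces $\sqrt{d_\Sigma}$ is sharp, but for the purposes of reproducing the theorem as stated you should simply invoke the lemma's inequality.
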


In the next corollary the upper and lower bounds on the number $N_\alpha$ of negative eigenvalues in Theorem~\ref{thm:numberEV} are made more explicit. This also leads to 
an asymptotic bound $N_\alpha = e^{-2\pi\alpha+O(1)}$ as $\alpha \to -\infty$. We mention that a slightly better asymptotic bound  was obtained in \cite{EK04}.
For convenience we make a very 
small technical restriction and consider the case $\alpha + d_\Sigma < \frac{\ln(4R)}{2\pi} - \frac{1}{\pi}$ only.

\begin{cor}\label{cor:numberEVestimate}
Let $\alpha \neq 0$ be such that $\alpha + d_\Sigma < \frac{\ln(4R)}{2\pi} - \frac{1}{\pi}$ and denote by $N_\alpha$ the number of negative eigenvalues of $\Op$,
counted with multiplicities. Then the estimate
\begin{align}\label{eq:numberEVestimate}
2R c^{-1} e^{-2\pi\alpha-\gamma} -1-4(e^\frac{1}{92}-1) < N_\alpha < 2R c e^{-2\pi\alpha-\gamma} +1
\end{align}
holds, where $\gamma\approx 0.577216$ is the Euler--Mascheroni constant and $c:=e^{2\pi d_\Sigma}$.  In particular, $N_\alpha = e^{-2\pi\alpha+ O(1)}$ as $\alpha \to -\infty$.
\end{cor}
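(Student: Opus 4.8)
The plan is to extract the corollary directly from Theorem~\ref{thm:numberEV} by estimating the endpoints of the intervals $I_r$ and then inverting the relation $r \mapsto$ (left endpoint of $I_r$). Recall that the left endpoint of $I_r$ is $a_r := \frac{\ln(4R)}{2\pi} - \frac{1}{\pi}\sum_{j=1}^{r+1}\frac{1}{2j-1}$. Since the partial sums $\sum_{j=1}^{n}\frac{1}{2j-1} = \sum_{j=1}^{2n}\frac{1}{j} - \frac12\sum_{j=1}^{n}\frac1j = H_{2n} - \tfrac12 H_n$ are closely tied to harmonic numbers, I would use the standard two-sided bound $\ln n + \gamma \le H_n \le \ln n + \gamma + \frac{1}{2n}$ (valid for $n\ge 1$, with the refined correction $\frac{1}{2n} - \frac{1}{12n^2} \le H_n - \ln n - \gamma$ if needed) to get
\[
 \sum_{j=1}^{n}\frac{1}{2j-1} = \tfrac12\ln n + \ln 2 + \tfrac{\gamma}{2} + \varepsilon_n,
\]
where $\varepsilon_n \to 0$ and $\varepsilon_n$ can be bounded explicitly; the constant $\frac{1}{92}$ appearing in \eqref{eq:numberEVestimate} will come from controlling these remainder terms for all $n\ge 1$ (the worst case being small $n$). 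Concretely, I expect $|\varepsilon_n|$ is maximized near $n=1$ or $n=2$ and a direct check gives the numerical constant.

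With this asymptotic expansion, the condition $\alpha + d_\Sigma \in I_r$, i.e. $a_r \le \alpha + d_\Sigma < a_{r-1}$, becomes
\[
 \tfrac12 \ln(r+1) + \ln 2 + \tfrac{\gamma}{2} + \varepsilon_{r+1} \ \ge\ \tfrac{\ln(4R)}{2\pi} - \pi(\alpha + d_\Sigma)\ \cdot\ \text{(up to the factor $\pi$)},
\]
which I would solve for $r$ by exponentiating: this yields $r + 1 \asymp 4R^2 e^{-4\pi(\alpha+d_\Sigma) - \gamma}$ up to multiplicative errors $e^{\pm 2\varepsilon_{r+1}}$, and taking square roots (recall $N_\alpha \ge 2r+1$) produces the lower bound $N_\alpha \ge 2r+1 \ge 2R c^{-1} e^{-2\pi\alpha - \gamma} - 1 - 4(e^{1/92}-1)$ after absorbing the remainder into the explicit constant. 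Symmetrically, the condition $\alpha - d_\Sigma \in I_l$ gives the matching upper bound $N_\alpha \le 2l+1 < 2R c\, e^{-2\pi\alpha - \gamma} + 1$, now with $c = e^{2\pi d_\Sigma}$ entering with the opposite sign in the exponent because $\alpha - d_\Sigma$ replaces $\alpha + d_\Sigma$. The technical restriction $\alpha + d_\Sigma < \frac{\ln(4R)}{2\pi} - \frac1\pi$ is exactly what guarantees $r \ge 1$, so that the asymptotic expansion of $\sum_{j=1}^{r+1}\frac1{2j-1}$ applies with $r+1 \ge 2$ and we avoid the degenerate intervals $I_{-1}, I_0$.

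Finally, the asymptotic statement $N_\alpha = e^{-2\pi\alpha + O(1)}$ as $\alpha \to -\infty$ is immediate from \eqref{eq:numberEVestimate}: both the lower and upper bounds are of the form $(\text{const}) \cdot e^{-2\pi\alpha} + O(1)$ with strictly positive constants depending only on $R$ and $d_\Sigma$, so $\ln N_\alpha = -2\pi\alpha + O(1)$. The main obstacle is purely bookkeeping: getting the explicit numerical constant $4(e^{1/92}-1)$ right requires a careful, fully quantitative version of the harmonic-number asymptotics that holds uniformly down to $n = 2$, rather than just an $O(1/n)$ statement. I would handle this by writing $H_n - \ln n - \gamma$ via its integral remainder (Euler--Maclaurin) and bounding it monotonically, then tracking how this propagates through the exponentiation and the square root; no conceptual difficulty, but the constant must be chased honestly.
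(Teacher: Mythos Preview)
Your overall strategy is correct and coincides with the paper's: invoke Theorem~\ref{thm:numberEV}, express $\sum_{j=1}^{n}\frac{1}{2j-1}=H_{2n}-\tfrac12 H_n$ via sharp two-sided harmonic-number bounds, and invert to solve for $r$ and $l$. However, the algebra in the middle of your sketch is off. From $\alpha+d_\Sigma\ge a_r=\frac{\ln(4R)}{2\pi}-\frac{1}{\pi}\sum_{j=1}^{r+1}\frac{1}{2j-1}$ and $\sum_{j=1}^{n}\frac{1}{2j-1}\approx\tfrac12\ln n+\ln 2+\tfrac{\gamma}{2}$ one obtains, after multiplying by $2\pi$ and simplifying, $\ln(r+1)\gtrsim -2\pi(\alpha+d_\Sigma)+\ln R-\gamma$, hence $r+1\gtrsim R\,e^{-2\pi(\alpha+d_\Sigma)-\gamma}$ directly. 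There is no $4R^2e^{-4\pi(\alpha+d_\Sigma)-\gamma}$ and no square root to take; $N_\alpha\ge 2r+1=2(r+1)-1$ then gives the lower bound immediately. The upper bound is symmetric with $\alpha-d_\Sigma$ and $l$.

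On the explicit constant: the paper uses the refinement $H_k<\ln k+\gamma+\frac{1}{2k}-\frac{1}{12k^2}+\frac{1}{120k^4}$ together with the lower bound $H_k>\ln k+\gamma+\frac{1}{2k}-\frac{1}{12k^2}$ to obtain $\sum_{j=1}^{k}\frac{1}{2j-1}<\tfrac12\bigl(\ln k+\ln 4+\gamma+\frac{1}{23k^2}\bigr)$. Plugging this in and writing $g(r):=2(r+1)\bigl(1-e^{1/(23(r+1)^2)}\bigr)-1$, one checks $g$ is increasing on $r\ge 1$, so its minimum is $g(1)=4(1-e^{1/92})-1$. Thus the constant $\tfrac{1}{92}=\tfrac{1}{23\cdot 4}$ does not come from the worst case of $\varepsilon_n$ at small $n$ as you guessed, but from evaluating the remainder at $r=1$ (i.e.\ $n=r+1=2$) after the $\tfrac{1}{23k^2}$ bound. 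Your Euler--Maclaurin plan would work in principle, but without this monotonicity observation you would not land on the specific form $4(e^{1/92}-1)$.
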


In the case where $\Sigma$ is a circle we have $d_\Sigma=0$ and hence from Theorem~\ref{thm:numberEV} and Corollary~\ref{cor:numberEVestimate} we immediately 
obtain the following explicit expressions for the number of negative eigenvalues. For a similar formula in a related context see~\cite{K12} (cf.~also~\cite{BT92}).

\begin{cor}\label{cor:circle}
Let $\Sigma$ be a circle of radius $R$ in $\R^3$, let $\alpha \neq 0$, and denote by $N_\alpha$ the number of negative 
eigenvalues of $\Op$, counted with multiplicities. If $\alpha \geq \frac{\ln (4 R)}{2 \pi}$ then $N_\alpha = 0$.
Otherwise,  
 \begin{align*}
  N_\alpha = 2r + 1, \quad \text{where}~r \geq 0~\text{is such that}~\alpha \in I_r.
 \end{align*}
If $\alpha < \frac{\ln(4R)}{2\pi}-\frac{1}{\pi}$ then the estimate
\begin{align*}
| N_\alpha - 2R e^{-2\pi\alpha-\gamma}| < 1+4(e^\frac{1}{92}-1)
\end{align*}
holds.
\end{cor}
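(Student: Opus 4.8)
The plan is to obtain this corollary as an immediate specialization of Theorem~\ref{thm:numberEV} and Corollary~\ref{cor:numberEVestimate}, the only substantive point being that the geometric defect $d_\Sigma$ defined in~\eqref{ddsigma} vanishes when $\Sigma$ is a circle of radius $R$. To see this, I would note that the integrand in~\eqref{ddsigma} depends only on the chordal distances $|\sigma(t)-\sigma(s)|$ and $|\tau(t)-\tau(s)|$; for an arc length parametrization of a circle of radius $R$ one computes $|\sigma(t)-\sigma(s)| = 2R\,|\sin\frac{t-s}{2R}|$, and the identical formula holds for $\tau$. Hence the two terms in the integrand coincide for all $s,t$, so the integral, and therefore $d_\Sigma$, is zero. (Equivalently, $d_\Sigma$ is invariant under rigid motions of $\R^3$ and $\Sigma$ is congruent to the reference circle.)

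With $d_\Sigma = 0$, Theorem~\ref{thm:numberEV} reads: if $\alpha \geq \frac{\ln(4R)}{2\pi}$ then $N_\alpha = 0$, and otherwise $2r+1 \leq N_\alpha \leq 2l+1$ where $\alpha \in I_r$ and $\alpha \in I_l$. Since the intervals $I_{-1}, I_0, I_1, \dots$ are pairwise disjoint, the single number $\alpha$ can lie in only one of them, so $r = l$; moreover $\alpha < \frac{\ln(4R)}{2\pi}$ excludes $\alpha \in I_{-1}$, whence $r \geq 0$. This gives $N_\alpha = 2r+1$ with $\alpha \in I_r$ and $r \geq 0$, which is the asserted formula.

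For the quantitative bound I would invoke Corollary~\ref{cor:numberEVestimate} in the range $\alpha < \frac{\ln(4R)}{2\pi} - \frac1\pi$: there $c = e^{2\pi d_\Sigma} = 1$, so~\eqref{eq:numberEVestimate} becomes $2R e^{-2\pi\alpha-\gamma} - 1 - 4(e^{1/92}-1) < N_\alpha < 2R e^{-2\pi\alpha-\gamma} + 1$; since $1 < 1 + 4(e^{1/92}-1)$, both inequalities combine into $|N_\alpha - 2R e^{-2\pi\alpha-\gamma}| < 1 + 4(e^{1/92}-1)$. There is no genuine obstacle in this argument — everything of substance has already been established in the two cited results — and the only steps warranting care are the bookkeeping around the disjoint intervals $I_r$ (in particular that $I_{-1}$ lies entirely in the already-excluded region $\alpha \geq \frac{\ln(4R)}{2\pi}$) and the short computation confirming $d_\Sigma = 0$ for the circle.
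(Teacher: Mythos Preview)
Your proposal is correct and follows exactly the approach the paper takes: the paper states that for a circle $d_\Sigma = 0$ and then deduces the corollary immediately from Theorem~\ref{thm:numberEV} and Corollary~\ref{cor:numberEVestimate}. Your write-up in fact supplies more detail than the paper does (the explicit chord-length computation and the bookkeeping showing $r=l\ge 0$), all of which is accurate.
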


Next, we investigate the behavior of the smallest eigenvalue of $\Op$ when varying $\Sigma$ among all curves of a given length $L$. It turns out that circles are the unique maximizers of the minimum of the spectrum $ \sigma (\Op)$ in the case that negative eigenvalues exist. 
The analog of the following theorem for curves in the two-dimensional space was shown in~\cite{E05,EHL06}. 

\begin{thm}\label{thm:EVmaximizer}
Let $\T$ be a circle in $\R^3$ of radius $R=\tfrac{L}{2\pi}$ and assume that $\Sigma$ is not a circle.
Let $\alpha < \frac{\ln (4 R)}{2 \pi}$. Then
\begin{align*}
 \min \sigma(\Op) < \min \sigma(-\Delta_{\T,\alpha}),
\end{align*}
where $-\Delta_{\T,\alpha}$ denotes the Schr\"odinger operator with $\delta$-interaction of strength
$\frac{1}{\alpha}$ supported on the circle $\T$. 
\end{thm}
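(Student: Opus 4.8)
The plan is to combine the Birman--Schwinger principle of Theorem~\ref{thm_KreinscheFormel}(i) with a variational characterization of the lowest eigenvalue and an isoperimetric-type rearrangement argument for the quadratic form associated with $\overline{B_\lambda}$. First I would observe that, by the Birman--Schwinger principle, for $\lambda<0$ one has $\lambda\in\sigma_{\mathrm p}(-\Delta_{\Sigma,\alpha})$ if and only if $\alpha$ is an eigenvalue of $\overline{B_\lambda}$, and moreover $\lambda=\min\sigma(-\Delta_{\Sigma,\alpha})$ corresponds to $\alpha=\max\sigma(\overline{B_\lambda})$ (the top of the spectrum, since the relevant eigenvalue branch of $\overline{B_\lambda}$ is monotone in $\lambda$). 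Thus $\min\sigma(-\Delta_{\Sigma,\alpha})$ equals the unique $\lambda<0$ solving $\max\sigma(\overline{B_\lambda})=\alpha$, provided such a $\lambda$ exists; the hypothesis $\alpha<\frac{\ln(4R)}{2\pi}$ guarantees the existence of negative eigenvalues (cf.\ Theorem~\ref{thm:numberEV}). Since $\lambda\mapsto\max\sigma(\overline{B_\lambda})$ is strictly increasing, it suffices to show the strict inequality
\begin{equation*}
 \max\sigma\bigl(\overline{B_\lambda^{\,\T}}\bigr) < \max\sigma\bigl(\overline{B_\lambda^{\,\Sigma}}\bigr)
\end{equation*}
at the value $\lambda=\lambda_\T:=\min\sigma(-\Delta_{\T,\alpha})$, where $B_\lambda^{\,\T}$ and $B_\lambda^{\,\Sigma}$ denote the regularized trace operators for the circle and for $\Sigma$ respectively; this would force $\lambda_\T$ to lie strictly to the right of the value where the $\Sigma$-branch reaches $\alpha$, i.e.\ $\min\sigma(-\Delta_{\Sigma,\alpha})<\lambda_\T$.

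Next I would set up the quadratic-form comparison. Writing $B_\lambda=B_\lambda^{\mathrm{sing}}+B_\lambda^{\mathrm{reg}}$, the singular (Hadamard finite-part) part with kernel $\frac{e^{-\sqrt{-\lambda}|x-y|}}{4\pi|x-y|}$ regularized by the $\frac{\ln\delta}{2\pi}$ counterterm produces, after passing to arc-length parametrization (which is an isometry $L^2(\Sigma)\to L^2(0,L)$ independent of the geometry of $\Sigma$), a operator whose leading part is the \emph{same} universal convolution operator on the circle $[0,L]$ for every curve, and a remainder governed by the difference of kernels
\begin{equation*}
 K_\Sigma(s,t)-K_\T(s,t)=\frac{e^{-\sqrt{-\lambda}|\sigma(t)-\sigma(s)|}}{4\pi|\sigma(t)-\sigma(s)|}-\frac{e^{-\sqrt{-\lambda}|\tau(t)-\tau(s)|}}{4\pi|\tau(t)-\tau(s)|},
\end{equation*}
whose $L^2$-norm is controlled by $d_\Sigma$ in the vein of \eqref{ddsigma} (indeed the whole estimate should follow the same route used to prove Theorem~\ref{thm:numberEV}). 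The key point is that for the circle $\T$ the eigenfunction realizing $\max\sigma(\overline{B_{\lambda_\T}^{\,\T}})$ is the constant function $\p\equiv L^{-1/2}$, by rotational symmetry (the operator $B_\lambda^{\,\T}$ is a convolution operator on the circle, so its eigenfunctions are the Fourier modes $e^{2\pi i n s/L}$, and the top eigenvalue is attained at $n=0$). Testing the form of $\overline{B_{\lambda_\T}^{\,\Sigma}}$ against this constant test function then gives
\begin{equation*}
 \max\sigma\bigl(\overline{B_{\lambda_\T}^{\,\Sigma}}\bigr)\ \geq\ \bigl\langle \overline{B_{\lambda_\T}^{\,\Sigma}}\,\p,\p\bigr\rangle_{L^2(0,L)}
 = \frac{1}{L}\int_0^L\!\!\int_0^L K_\Sigma(s,t)\,ds\,dt \ -\ \frac{\ln\delta}{2\pi}\text{-counterterm},
\end{equation*}
and one must show this is strictly larger than $\frac{1}{L}\int_0^L\!\int_0^L K_\T(s,t)\,ds\,dt-(\text{same counterterm})=\max\sigma(\overline{B_{\lambda_\T}^{\,\T}})$. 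Because the counterterm is geometry-independent, everything reduces to the strict inequality
\begin{equation*}
 \int_0^L\!\!\int_0^L \frac{e^{-\sqrt{-\lambda_\T}|\sigma(t)-\sigma(s)|}}{4\pi|\sigma(t)-\sigma(s)|}\,ds\,dt \ >\ \int_0^L\!\!\int_0^L \frac{e^{-\sqrt{-\lambda_\T}|\tau(t)-\tau(s)|}}{4\pi|\tau(t)-\tau(s)|}\,ds\,dt ,
\end{equation*}
i.e.\ that among all closed curves of length $L$ the Riesz-type energy $\iint G_{\sqrt{-\lambda}}(\sigma(s)-\sigma(t))\,ds\,dt$ with the Yukawa/Helmholtz kernel $G_m(z)=\frac{e^{-m|z|}}{4\pi|z|}$ is \emph{strictly} maximized by the circle.

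The main obstacle is exactly this last geometric inequality. I would prove it following the chord-length rearrangement of Exner--Harrell--Loss used in the planar case (\cite{E05,EHL06}), adapted to $\R^3$: parametrize both curves by arc length on the common interval $[0,L]$ and compare, for each fixed ``time lag'' $u\in[0,L]$, the chord length $|\sigma(s+u)-\sigma(s)|$ averaged over $s$ with the corresponding chord of the circle. The crucial analytic input is that $t\mapsto G_m(t)=\frac{e^{-mt}}{4\pi t}$ is \emph{strictly convex} and strictly decreasing on $(0,\infty)$, so by Jensen's inequality it suffices to show that for every $u$
\begin{equation*}
 \frac{1}{L}\int_0^L |\sigma(s+u)-\sigma(s)|\,ds\ \leq\ |\tau(0+u)-\tau(0)| = \frac{L}{\pi}\Bigl|\sin\frac{\pi u}{L}\Bigr|,
\end{equation*}
which is the content of the chord-length inequality for closed curves (a consequence of the fact that $\sigma(s+u)-\sigma(s)=\int_s^{s+u}\dot\sigma(r)\,dr$ has Euclidean norm at most the ``average'' attained by a curve of constant curvature, i.e.\ the circle — proved via Fourier expansion of $\dot\sigma$ on $[0,L]$, exactly as in \cite{EHL06}). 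Strict convexity of $G_m$ then upgrades the integrated inequality to a strict one unless equality holds for a.e.\ $u$, which by the equality case of the chord inequality forces $\Sigma$ to be a circle, contradicting the hypothesis. Finally, I would take care of two technical points: justifying that the Hadamard-regularized singular integral is genuinely represented by the naive kernel integral when tested against Lipschitz functions such as constants (this is where Proposition~\ref{Proposition_Main} and the structure of $B_\lambda$ enter, the logarithmic singularity being integrable against a bounded function so the $\frac{\ln\delta}{2\pi}$ counterterm combines cleanly), and checking the monotonicity of $\lambda\mapsto\max\sigma(\overline{B_\lambda})$ together with continuity, so that the strict ordering of the top spectra at $\lambda=\lambda_\T$ translates into the claimed strict ordering $\min\sigma(-\Delta_{\Sigma,\alpha})<\min\sigma(-\Delta_{\T,\alpha})$.
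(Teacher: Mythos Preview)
Your proposal is correct and follows essentially the same route as the paper: reduce via the Birman--Schwinger principle and the monotonicity of $\lambda\mapsto\nu_1(\lambda)$ to the strict inequality $\nu_1^{\T}(\lambda_\T)<\nu_1^{\Sigma}(\lambda_\T)$, test the $\Sigma$-form against the constant function (the top eigenfunction on the circle), and derive the required strict positivity of $\langle D_{\lambda_\T}h,h\rangle$ from the Exner--Harrell--Loss mean-chord inequality combined with strict monotonicity and convexity of $G_m(t)=\frac{e^{-mt}}{4\pi t}$ and Jensen. The only place the paper is tidier is in handling the logarithmic counterterm: rather than arguing informally that it is ``geometry-independent'' and cancels between the two divergent integrals, the paper invokes the already-established decomposition $\overline{B_\lambda}=D_\lambda+J^*\overline{B_\lambda^\T}J$ (Lemma~\ref{lem:Dlambda}), where $D_\lambda$ is the bounded operator with kernel $K_\Sigma-K_\T$; this absorbs the regularization entirely into the circle term and reduces the comparison immediately to showing $\langle D_{\lambda_1}h,h\rangle>0$, which is exactly your key double integral inequality.
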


Finally, we regard the pair  $\{\Opfree, \Op \}$ as a scattering system consisting of the unperturbed Laplacian $\Opfree$ and 
the singularly perturbed operator $\Op$.
The following corollary is an immediate consequence of Theorem~\ref{thm:ResDiff} and the Birman--Krein theorem \cite{BK}.

\begin{cor}
The absolutely continuous spectrum of $\Op$ is given by
\begin{equation*}
 \sigma_{\rm ac}(\Op) = [0,+\infty).
\end{equation*}
Moreover, the wave operators for the scattering pair $\{\Opfree, \Op \}$ exist and are complete.
\end{cor}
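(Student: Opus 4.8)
The plan is to obtain both assertions as an immediate consequence of Theorem~\ref{thm:ResDiff} combined with classical trace-class scattering theory, precisely as indicated in the sentence preceding the statement. First I would fix a spectral parameter: by Theorem~\ref{thm_KreinscheFormel}~(ii) the set $\rho(\Op)\cap(-\infty,0)$ is nonempty, so we may choose $\lambda_0$ in it, giving $\lambda_0\in\rho(\Op)\cap\rho(\Opfree)$. By Theorem~\ref{thm:ResDiff} the singular values of the resolvent difference~\eqref{eq:ResDiff} decay like $O(k^{-2}(\ln k)^{-1})$, so in particular this difference lies in $\mathfrak{S}_p(L^2(\R^3))$ for every $p>1/2$; taking $p=1$ shows that $(\Op-\lambda_0)^{-1}-(\Opfree-\lambda_0)^{-1}$ is a trace class operator. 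Thus $\Op$ and $\Opfree$ are two selfadjoint operators in $L^2(\R^3)$ whose resolvents differ by a trace class operator.

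Next I would invoke the Birman--Krein theorem~\cite{BK}, i.e.\ the resolvent, trace-class version of the Kato--Rosenblum theorem: if two selfadjoint operators have a trace class resolvent difference at one (equivalently, every) point of their common resolvent set, then the wave operators for this pair exist and are complete, and the absolutely continuous parts of the two operators are unitarily equivalent. Applying this to the pair $\{\Opfree,\Op\}$ together with the previous step yields existence and completeness of the wave operators for $\{\Opfree,\Op\}$, which is the second assertion, as well as the unitary equivalence of the absolutely continuous part of $\Op$ and that of $\Opfree$. Since the free Laplacian $\Opfree$ has purely absolutely continuous spectrum equal to $[0,+\infty)$, this unitary equivalence forces $\sigma_{\rm ac}(\Op)=[0,+\infty)$, which is the first assertion.

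I do not expect any genuine obstacle here: the only nontrivial input is the Schatten estimate of Theorem~\ref{thm:ResDiff}, which provides far more than the mere trace class property required by the Birman--Krein framework. The only routine point worth noting is the passage from "trace class at $\lambda_0$" to "trace class at every $\lambda\in\rho(\Op)\cap\rho(\Opfree)$", which follows from the resolvent identity since $\mathfrak{S}_1$ is a two-sided ideal; but for the statement as phrased even the single value $\lambda_0$ suffices.
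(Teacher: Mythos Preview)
Your proposal is correct and follows exactly the approach indicated by the paper, which simply states that the corollary is an immediate consequence of Theorem~\ref{thm:ResDiff} and the Birman--Krein theorem~\cite{BK}. Your write-up spells out the standard details (choice of $\lambda_0\in\rho(\Op)\cap(-\infty,0)$, trace class resolvent difference, unitary equivalence of absolutely continuous parts, and $\sigma_{\rm ac}(\Opfree)=[0,+\infty)$) that the paper leaves implicit.
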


In the next theorem we express the scattering matrix of the scattering system $\{\Opfree,\Op\}$ in terms of the limits of a certain explicit 
operator function, using a result in~\cite{BMN15}; we refer to \cite{BW83,K,RS79,Y92} and Appendix~\ref{appi} 
for more details on scattering theory. For our purposes it is convenient to consider the symmetric operator $S$ in $L^2(\R^3)$ defined as
\begin{equation*}
 S u = -\Delta u,\qquad\dom S=\bigl\{u\in H^2(\R^3):u\vert_\Sigma=0\bigr\},
\end{equation*}
which turns out to be the intersection of the selfadjoint operators $\Opfree$ and $\Op$. Then $S$ is a densely defined, closed, symmetric operator with infinite defect numbers. 
Furthermore, in general $S$ contains a selfadjoint part which can be split off. More precisely, consider the closed subspace
\begin{equation*}
 \sH_1=\overline{ \text{\rm span}\, \bigcup_{\lambda\in \C\setminus[0,\infty)} (\ran(S-\lambda))^\bot }  
\end{equation*}
of $L^2(\R^3)$ and let $\sH_2=\sH_1^\bot$. Then $S$ admits the orthogonal sum decomposition
\begin{equation*}
 S=S_1\oplus S_2
\end{equation*}
with respect to the space decomposition $L^2(\R^3)=\sH_1\oplus\sH_2$, where the closed symmetric operator $S_1$ is completely non-selfadjoint or 
simple (cf.~\cite[Chapter~VII]{AG93}) in $\sH_1$ and $S_2$ is a selfadjoint operator in $\sH_2$ with purely absolutely continuous spectrum. 
In the following let $L^2(\R,d\lambda,\cH_\lambda)$ be a spectral representation of the selfadjoint operator $S_2$ in $\sH_2$; cf. \cite[Chapter 4]{BW83}.

\begin{thm}\label{nrthm}
Fix $\eta < 0$ such that $0 \in \rho (\overline{B_\eta} - \alpha)$ and define the operator function $\C \setminus [0,\infty) \ni \lambda\mapsto N(\lambda)$ by
\begin{equation}\label{nnn}
 (N(\lambda)h)(x)=\int_\Sigma h(y)\frac{e^{i\sqrt{\lambda}\vert x - y\vert}-e^{i\sqrt{\eta}\vert x - y\vert}}{4\pi \vert x - y\vert}\,d\sigma(y),
\end{equation}
where $h\in L^2(\Sigma)$ and $x\in\Sigma$.
Then the following assertions hold.
 \begin{enumerate}
  \item $\Imag N(\lambda)\in\sS_1(L^2(\Sigma))$ for all $\lambda\in\C\setminus[0,\infty)$ and the limit 
  \begin{align*}
   \Imag N(\lambda + i0) := \lim_{\varepsilon\searrow 0} \Imag N (\lambda + i\varepsilon)
  \end{align*}
  exists in $\sS_1(L^2(\Sigma))$ for a.e. $\lambda\in [0,\infty)$.
  \item The function $\lambda\mapsto N(\lambda)$, $\lambda\in\C\setminus[0,\infty)$, is a Nevanlinna function such that the limit 
  \begin{align*}
   N(\lambda + i0) := \lim_{\varepsilon\searrow 0} N(\lambda + i\varepsilon)
  \end{align*}
  exists in the Hilbert--Schmidt norm for a.e. $\lambda \in [0,\infty)$. Moreover, for a.e. $\lambda \in [0,\infty)$ 
  the operator $N(\lambda + i0)+ \overline{B_\eta}-\alpha$ is boundedly invertible.
  \item The space $L^2(\R,d\lambda,\cG_\lambda\oplus\cH_\lambda)$, where
  \begin{equation*}
   \cG_\lambda:=\overline{\ran\bigl(\text{\rm Im}\, N(\lambda+i0)\bigr)}\quad \text{for a.e.}\,\,\,\lambda\in[0,\infty),
  \end{equation*}
  forms a spectral representation of $\Opfree$.
  \item The scattering matrix $\{S(\lambda)\}_{\lambda\in\R}$ of the scattering system $\{\Opfree,\Op\}$
 acting in the space $L^2(\R,d\lambda,\cG_\lambda\oplus\cH_\lambda)$ admits the representation
  \begin{equation*}
   S(\lambda)=\begin{pmatrix} S'(\lambda) & 0 \\ 0 & I_{\cH_\lambda}\end{pmatrix} 
  \end{equation*}
  for a.e. $\lambda\in [0,\infty)$, where
  \begin{equation*}
   S'(\lambda) = I_{\cG_\lambda} - 2 i \sqrt{\text{\rm Im}\, N(\lambda+i0)}\,\bigl(N(\lambda+i0) + \overline{B_\eta}-\alpha\bigr)^{-1} \sqrt{\text{\rm Im}\, N(\lambda+i0)}.
 \end{equation*}
 \end{enumerate}
\end{thm}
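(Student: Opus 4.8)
The plan is to deduce Theorem~\ref{nrthm} from the abstract scattering-matrix formula of~\cite{BMN15}, applied to the quasi boundary triple that realizes $\Opfree$ and $\Op$ as extensions of the symmetric operator $S$. Recall from the appendix that $\Opfree$ and $\Op$ fit into a quasi boundary triple $(L^2(\Sigma),\Gamma_0,\Gamma_1)$ for $S^*$, with $\Opfree=\ker\Gamma_0$ and $\Op$ corresponding to the boundary condition $\Gamma_1 u=\alpha\Gamma_0 u$, whose $\gamma$-field and Weyl function are (extensions of) $\lambda\mapsto\gamma_\lambda$ and $\lambda\mapsto\overline{B_\lambda}$, respectively. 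The operator function $N(\lambda)$ in~\eqref{nnn} is precisely the regularized Weyl function $M(\lambda)-M(\eta)$ written out via the explicit kernel: indeed $\gamma_\lambda^*$ is given by~\eqref{goodtohave}, the Weyl function has integral kernel $\frac{e^{i\sqrt{\lambda}|x-y|}}{4\pi|x-y|}$ after analytic continuation from $\lambda<0$ (where it coincides with $\overline{B_\lambda}$), and the subtraction of the $\eta$-term both removes the logarithmic singularity on the diagonal and makes the kernel a genuine Hilbert--Schmidt kernel on $\Sigma\times\Sigma$. So the identification $N(\lambda)=M(\lambda)-M(\eta)$ (as bounded operators in $L^2(\Sigma)$, with $M(\eta)=\overline{B_\eta}$) is the first step; it then needs $N(\lambda)+\overline{B_\eta}-\alpha=M(\lambda)-\alpha$ to be the relevant "perturbation term."

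For part~(i) I would estimate the Hilbert--Schmidt norm of $\Imag N(\lambda)$ directly from the kernel: for $\lambda=\mu+i\varepsilon$ the difference $\frac{e^{i\sqrt{\lambda}|x-y|}-e^{-i\overline{\sqrt{\lambda}}|x-y|}}{4\pi|x-y|}$ is a continuous function of $x,y$ on $\Sigma\times\Sigma$ (the diagonal singularity cancels), depending continuously on $\lambda$ up to and including the cut $[0,\infty)$, whence $\Imag N(\lambda)$ is even a Hilbert--Schmidt operator with continuous kernel, hence trace class since $\Sigma$ is compact and one-dimensional; the boundary limit $\Imag N(\lambda+i0)$ then exists in $\sS_1$ by dominated convergence of the kernels. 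For part~(ii), that $\lambda\mapsto N(\lambda)$ is a Nevanlinna function follows from the fact that $M(\lambda)$ is the Weyl function of the quasi boundary triple (Weyl functions of quasi/ordinary boundary triples are operator-valued Nevanlinna functions, up to the usual care with the non-closedness of $M(\lambda)$); the existence of the limit $N(\lambda+i0)$ in Hilbert--Schmidt norm is again a kernel estimate, splitting $N(\lambda)=[N(\lambda)-N(\eta)]$ into the already-handled imaginary part plus a real part whose kernel is $\frac{\cos(\sqrt{\lambda}|x-y|)-e^{\sqrt{-\eta}|x-y|}}{4\pi|x-y|}$-type and remains $L^2$ on $\Sigma\times\Sigma$ uniformly near the cut. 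Bounded invertibility of $N(\lambda+i0)+\overline{B_\eta}-\alpha=M(\lambda+i0)-\alpha$ for a.e.\ $\lambda\ge 0$ is the limiting absorption statement: $M(\lambda)-\alpha$ is boundedly invertible off $[0,\infty)$ because $\lambda\in\rho(\Op)$ there (by the Birman--Schwinger/Krein formula~\eqref{KreinscheFormel}), and the a.e.-existence of a bounded inverse of the boundary value follows from the general theory in~\cite{BMN15} once the trace-class imaginary part and Nevanlinna property are in place.

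For parts~(iii) and~(iv) I would simply invoke the main theorem of~\cite{BMN15}: given a quasi boundary triple whose Weyl function $M$ has the property that $\Imag M(\lambda)$ is trace class with $\sS_1$-boundary values a.e., the corresponding scattering matrix of $\{\Opfree,\Op\}$ — after splitting off the selfadjoint part $S_2$ of $S$ responsible for the $I_{\cH_\lambda}$-block — acts in $L^2(\R,d\lambda,\cG_\lambda\oplus\cH_\lambda)$ with $\cG_\lambda=\overline{\ran\Imag M(\lambda+i0)}$ and is given by
\[
 I_{\cG_\lambda}-2i\sqrt{\Imag M(\lambda+i0)}\,\bigl(\alpha-M(\lambda+i0)\bigr)^{-1}\sqrt{\Imag M(\lambda+i0)},
\]
and $L^2(\R,d\lambda,\cG_\lambda\oplus\cH_\lambda)$ is a spectral representation of $\Opfree=\ker\Gamma_0$. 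Rewriting $\alpha-M(\lambda+i0)=-(N(\lambda+i0)+\overline{B_\eta}-\alpha)$ and absorbing the sign into the formula (the minus inside the inverse and the resulting overall sign combine to give exactly the stated $S'(\lambda)$, since $\sqrt{\Imag N}$ appears on both sides) yields the claimed representation. The one point requiring care is that $\cG_\lambda=\overline{\ran\Imag N(\lambda+i0)}$ rather than $\overline{\ran\Imag M(\lambda+i0)}$: but $\Imag N(\lambda)=\Imag M(\lambda)$ since $M(\eta)=\overline{B_\eta}$ is selfadjoint, so these coincide.

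The main obstacle is the precise verification of the hypotheses of the abstract result of~\cite{BMN15} in the quasi boundary triple setting — in particular that the (a priori only densely defined, non-closed) Weyl function $M(\lambda)=\overline{B_\lambda}$ is sufficiently regular: one must check that $M(\lambda)$ is bounded for $\lambda\in\C\setminus[0,\infty)$ (equivalently, that $\overline{B_\lambda}$ is bounded, which does \emph{not} follow from Proposition~\ref{Proposition_Main} alone since there $\overline{B_\lambda}$ with $\lambda\le0$ is unbounded — so here the analytic continuation to non-real $\lambda$ genuinely changes the operator, or one works with $M(\lambda)-M(\eta)=N(\lambda)$ which \emph{is} bounded and Hilbert--Schmidt), and that the $\gamma$-field extends appropriately. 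Handling this correctly, i.e.\ phrasing everything in terms of the bounded, regular object $N(\lambda)$ and reconciling it with the resolvent formula~\eqref{KreinscheFormel}, and then checking the $\sS_1$-boundary-value hypothesis by the explicit kernel estimate, is where the real work lies; the scattering-matrix formula itself is then a direct quotation.
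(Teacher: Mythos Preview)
Your overall plan---place the pair $\{\Opfree,\Op\}$ in a quasi boundary triple and invoke the abstract scattering-matrix result of~\cite{BMN15}---is the paper's approach. But there is a genuine confusion in your setup that leaves a gap. The Weyl function of the triple in Proposition~\ref{qbtprop} is \emph{not} $\overline{B_\lambda}$; by~\eqref{mmi} and Remark~\ref{remchen} it is $N(\lambda)$ itself, with $M(\eta)=0$, and in that triple $\Op$ corresponds to the unbounded parameter $\Theta=\alpha-\overline{B_\eta}$, not to $\Theta=\alpha$. Your identification ``$M(\lambda)=\overline{B_\lambda}$, $M(\eta)=\overline{B_\eta}$'' matches no triple in the paper, and since $\overline{B_\lambda}$ is unbounded it cannot be the Weyl function of a quasi boundary triple in the usual sense. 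You flag this as the main obstacle but do not resolve it; moreover, the abstract Theorem~\ref{scatti} you want to quote is stated for $\{A_0,A_1\}=\{T\upharpoonright\ker\Gamma_0,T\upharpoonright\ker\Gamma_1\}$, not for a general $A_\Theta$, so your formula with $(\alpha-M)^{-1}$ is not what that theorem gives (and your sign manipulation at the end in fact produces $+2i$ rather than $-2i$).

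The paper's fix is precisely the step you are missing: it \emph{modifies} the boundary map to $\Gamma_1 u=u_c|_\Sigma+(\overline{B_\eta}-\alpha)h$, checks that this is again a quasi boundary triple, and observes that now $\Op=T\upharpoonright\ker\Gamma_1$ on the nose and the Weyl function is $M(\lambda)=N(\lambda)+\overline{B_\eta}-\alpha$. Then Theorem~\ref{scatti} applies directly to $\{A_0,A_1\}=\{\Opfree,\Op\}$, and since $\overline{\Imag M}=\Imag N$ and $\overline{M}=N+\overline{B_\eta}-\alpha$ the formula is exactly the claimed one, with no sign juggling. The hypotheses of Theorem~\ref{scatti} are then verified not by your direct kernel estimates but by checking the \emph{input} conditions: $\overline{\gamma(\eta)}=\gamma_\eta\in\sS_2$ (from the singular-value bound $s_k(\gamma_\eta)=O(1/k)$ in the proof of Theorem~\ref{thm:ResDiff}) and $M(\eta)^{-1}=(\overline{B_\eta}-\alpha)^{-1}$ bounded (by the choice of $\eta$). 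Items~(i),~(iii),~(iv) then fall out as conclusions of the abstract theorem. For~(ii) the paper writes $N(\lambda)=(\lambda-\eta)\gamma(\eta)^*\overline{\gamma(\eta)}+(\lambda-\eta)^2\gamma(\eta)^*(\Opfree-\lambda)^{-1}\overline{\gamma(\eta)}$ and invokes~\cite[Proposition~3.14]{BW83} to get the a.e.\ boundary limits in $\sS_2$ from $\gamma_\eta\in\sS_2$; your alternative via kernel estimates is plausible for the $\sS_1$-claim in~(i) but does not by itself yield the bounded invertibility of $N(\lambda+i0)+\overline{B_\eta}-\alpha$, which really requires the abstract machinery.
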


\section{The operator \boldmath{$B_\lambda$} and the generalized trace}\label{sec3}

In this section we discuss properties of the operator $B_\lambda$ in~\eqref{blambdadef} and of the generalized trace defined 
in~\eqref{traceudef}. We verify that the latter is well-defined and independent of $\lambda$. Our investigation of the operator $B_\lambda$
is split into two parts: first the special case of a circle $\Sigma$ is treated, and afterwards the results are extended 
by perturbation arguments to the general case.

\subsection{Properties of \boldmath{$B_\lambda$} for a circle}

Throughout this subsection we assume that $\Sigma$ is a circle of radius $R = \frac{L}{2 \pi}$. Without loss of generality we assume that $\Sigma$ lies in the $xy$-plane and 
is centered at the origin. We will make use of its arc length parametrization  
\begin{align*}
 \sigma : [0,L] \to \R^3, \quad \sigma (t) = \big(R\cos(2\pi t/L),R\sin(2\pi t/L),0\big)
\end{align*}
and occasionally use the formula
\begin{align}\label{eq:chord}
 |\sigma (s) - \sigma (t)| = 2 R \sin\li  |s-t|\frac{\pi}{L} \re , \qquad s,t\in[0,L],
\end{align}
which holds for elementary geometric reasons. Furthermore, for $x = \sigma (t) \in \Sigma$ and $\delta > 0$ let $I_\delta^\Sigma (x)$ be the 
open interval in $\Sigma$ with center $x$ and length $2 \delta$ as in~\eqref{eq:IDeltaSigma}.

Let us first prove the following preliminary lemma. Its proof is partly inspired by~\cite[Lemma 1]{T90}.

\begin{lem}\label{Lemma_klambda}
Let $\lambda \leq 0$ and $x \in \Sigma$. Then the limit
\begin{align*}
 k_\lambda := \lim_{\delta \searrow 0} \bigg[ \int_{\Sigma \setminus I_\delta^\Sigma (x)} \frac{ e^{-\sqrt{-\lambda}|x-y|} }{4\pi|x-y|} \; d \sigma (y) +  \frac{\ln \delta }{2\pi} \bigg]
\end{align*}
exists in $\R$, is independent of $x$ and equals
\begin{align*}
 k_\lambda = \int_0^{\frac{\pi}{2}} \frac{ e^{-\sqrt{-\lambda}\cdot 2R\sin(s)}-1 }{ 2\pi\sin(s) } \; d s +  \frac{ \ln(4R) }{2\pi}.
\end{align*}
In particular, $k_\lambda \to - \infty$ as $\lambda \to - \infty$.
\end{lem}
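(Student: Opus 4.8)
The plan is to compute the limit by splitting the integrand into its singular principal part and a regular remainder, and then to evaluate the two pieces separately. Fix $x=\sigma(t_0)\in\Sigma$; by rotational symmetry of the circle we may take $t_0=0$, which already shows the independence of $x$ once the limit is shown to exist. Using the chord-length formula \eqref{eq:chord}, $|x-\sigma(s)|=2R\sin(\pi|s|/L)$ for $|s|\le L/2$, I would parametrize $\Sigma\setminus I_\delta^\Sigma(x)$ as $\{\sigma(s):\delta\le|s|\le L/2\}$ (up to endpoints of measure zero) and write
\[
 \int_{\Sigma\setminus I_\delta^\Sigma(x)}\frac{e^{-\sqrt{-\lambda}|x-y|}}{4\pi|x-y|}\,d\sigma(y)
 =2\int_\delta^{L/2}\frac{e^{-\sqrt{-\lambda}\cdot 2R\sin(\pi s/L)}}{4\pi\cdot 2R\sin(\pi s/L)}\,ds .
\]
The idea is to add and subtract the ``model'' singularity $\dfrac{1}{4\pi\cdot 2R\sin(\pi s/L)}$ inside the integral, i.e. to write $e^{-\sqrt{-\lambda}\cdot 2R\sin(\pi s/L)}=\bigl(e^{-\sqrt{-\lambda}\cdot 2R\sin(\pi s/L)}-1\bigr)+1$. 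The first resulting integrand is bounded near $s=0$ (the numerator vanishes linearly while $\sin(\pi s/L)$ does too), so that integral converges absolutely as $\delta\searrow 0$; after the substitution $s'=\pi s/L$ it becomes $\int_0^{\pi/2}\dfrac{e^{-\sqrt{-\lambda}\cdot 2R\sin s'}-1}{2\pi\sin s'}\,ds'$, which is exactly the first term in the claimed formula for $k_\lambda$.

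It remains to treat the ``model'' part $2\int_\delta^{L/2}\dfrac{1}{4\pi\cdot 2R\sin(\pi s/L)}\,ds=\dfrac{1}{2\pi}\int_{\pi\delta/L}^{\pi/2}\dfrac{ds'}{\sin s'}$ together with the explicit counterterm $\dfrac{\ln\delta}{2\pi}$. Here I would use the elementary antiderivative $\int\frac{ds}{\sin s}=\ln\tan(s/2)$, giving $\dfrac{1}{2\pi}\bigl[\ln\tan(\pi/4)-\ln\tan(\pi\delta/(2L))\bigr]=-\dfrac{1}{2\pi}\ln\tan(\pi\delta/(2L))$. As $\delta\searrow 0$ one has $\tan(\pi\delta/(2L))=\dfrac{\pi\delta}{2L}(1+o(1))$, so $-\dfrac{1}{2\pi}\ln\tan(\pi\delta/(2L))=-\dfrac{1}{2\pi}\ln\delta-\dfrac{1}{2\pi}\ln\dfrac{\pi}{2L}+o(1)$. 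Adding $\dfrac{\ln\delta}{2\pi}$ cancels the divergent term and leaves $-\dfrac{1}{2\pi}\ln\dfrac{\pi}{2L}$. Recalling $R=L/(2\pi)$, i.e. $L=2\pi R$, this equals $-\dfrac{1}{2\pi}\ln\dfrac{1}{4R}=\dfrac{\ln(4R)}{2\pi}$, the second term in the asserted formula. Combining the two pieces proves that the limit exists, is independent of $x$, and has the stated value.

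Finally, for the last assertion I would note that the constant term $\dfrac{\ln(4R)}{2\pi}$ does not depend on $\lambda$, while the integrand $\dfrac{e^{-\sqrt{-\lambda}\cdot 2R\sin s}-1}{2\pi\sin s}$ is, for each fixed $s\in(0,\pi/2)$, negative and decreasing to $-\infty$ as $\lambda\to-\infty$; by monotone convergence $\int_0^{\pi/2}\dfrac{e^{-\sqrt{-\lambda}\cdot 2R\sin s}-1}{2\pi\sin s}\,ds\to-\infty$, whence $k_\lambda\to-\infty$. I expect the only genuinely delicate point to be the bookkeeping of the logarithmic divergence — matching $-\tfrac{1}{2\pi}\ln\tan(\pi\delta/(2L))$ against the counterterm $\tfrac{\ln\delta}{2\pi}$ and correctly tracking the residual constant so that the factor $4R$ comes out right; the convergence of the regular part and the $\lambda\to-\infty$ limit are then routine. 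One should also double-check the harmless identification of $\Sigma\setminus I_\delta^\Sigma(x)$ with the symmetric parameter interval, and that the two endpoints $s=\pm L/2$ contribute nothing.
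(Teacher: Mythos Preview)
Your argument is correct and follows essentially the same route as the paper: parametrize by arc length, split $e^{-\sqrt{-\lambda}\cdot 2R\sin s}=(e^{-\sqrt{-\lambda}\cdot 2R\sin s}-1)+1$, and track the logarithmic divergence of the second piece against the counterterm. The only difference is cosmetic: the paper subtracts $1/s$ from $1/\sin s$ and evaluates $\int_0^{\pi/2}\bigl(\tfrac{1}{\sin s}-\tfrac{1}{s}\bigr)\,ds=\ln(4/\pi)$ separately, whereas you use the closed-form antiderivative $\ln\tan(s/2)$ and expand; both give the same constant $\tfrac{\ln(4R)}{2\pi}$.

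One small slip in the last paragraph: for fixed $s\in(0,\pi/2)$ the integrand does \emph{not} decrease to $-\infty$ but to the finite value $-\tfrac{1}{2\pi\sin s}$. Monotone convergence still yields the desired conclusion because this pointwise limit is not integrable on $(0,\pi/2)$, so the integral tends to $-\infty$; just correct the stated reason.
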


\begin{proof}
First of all, it follows from the symmetry of the circle $\Sigma$ that $k_\lambda$ is
indeed independent of $x$ (if it exists). Hence, without loss of generality, we can choose $x = \sigma (0)$. Using \eqref{eq:chord} and the substitution $s = \frac{\pi}{L}t$ we obtain
\begin{align*}
 \int_{\Sigma \setminus I_\delta^\Sigma (x)} \frac{ e^{-\sqrt{-\lambda}|x-y|} }{4\pi|x-y|} \; d \sigma(y) & = \int_\delta^{L-\delta} \frac{ e^{-\sqrt{-\lambda}\cdot 2R\sin(\frac{\pi}{L} t)} }{4\pi\cdot 2R\sin(\frac{\pi}{L} t)} \; d t \\
 & = \int_{ \frac{\pi}{L}\delta }^{ \pi-\frac{\pi}{L}\delta } \frac{ e^{-\sqrt{-\lambda}\cdot 2R\sin(s)} }{ 4\pi \sin(s) } \; d s,
\end{align*}
where we have used $\frac{\pi}{L}=\frac{1}{2R}$ in the last equality. As $\sin(\frac{\pi}{2} - s) = \sin (\frac{\pi}{2} + s)$ for all $s \in \R$ it follows
\begin{align}\label{eq:calc_klambda}
\begin{split}
&\int_{\Sigma \setminus I_\delta^\Sigma (x)}  \frac{ e^{-\sqrt{-\lambda}|x-y|} }{4\pi|x-y|} \; d \sigma(y) + \frac{\ln \delta }{2\pi} \\
 &\quad = \int_{ \frac{\delta}{2R} }^{ \frac{\pi}{2} } \frac{ e^{-\sqrt{-\lambda}\cdot 2R\sin(s)} }{ 2\pi\sin(s) } \; d s  +  \frac{ \ln(\frac{\delta}{2R}) - \ln(\frac{\pi}{2}) + \ln(\pi R)  }{2\pi}  \\
 & \quad= \frac{1}{2\pi} \bigg[ \; \int_{ \frac{\delta}{2R} }^{ \frac{\pi}{2} } \frac{ e^{-\sqrt{-\lambda}\cdot 2R\sin(s)} }{ \sin(s) } \; d s   - \int_{ \frac{\delta}{2R} }^{ \frac{\pi}{2} } \frac{ 1 }{s} \; d s + \ln(\pi R) \bigg]\\
 & \quad= \frac{1}{2\pi} \bigg[ \; \int_{ \frac{\delta}{2R} }^{ \frac{\pi}{2} } \frac{ e^{-\sqrt{-\lambda}\cdot 2R\sin(s)}-1 }{ \sin(s) } \; d s  + \int_{ \frac{\delta}{2R} }^{ \frac{\pi}{2} } \frac{1}{\sin(s)}-\frac{1}{s} \; d s + \ln(\pi R) \bigg] .
\end{split}
\end{align}
With $\frac{d}{ds} \big( \ln(\sin(s/2)) - \ln(\cos(s/2)) \big) = \frac{1}{\sin s}$, $s\in(0,\frac{\pi}{2})$, we get 
$$\int_0^{ \frac{\pi}{2} } \left(\frac{1}{\sin(s)}-\frac{1}{s}\right) \; ds = \ln \left( \frac{4}{\pi}\right).$$ 
Hence in the limit $\delta \searrow 0$ the equation \eqref{eq:calc_klambda} becomes
\begin{align*}
 k_\lambda = \int_{0}^{ \frac{\pi}{2} } \frac{ e^{-\sqrt{-\lambda}\cdot 2R\sin(s)}-1 }{ 2\pi\sin(s) } \; d s +  \frac{ \ln(4R) }{2\pi}.
\end{align*}
In particular, $k_\lambda$ exists and is finite. By monotone convergence we have
\begin{align*}
 \int_{0}^{\frac{\pi}{2}} \frac{ 1- e^{- \sqrt{-\lambda}\cdot2R \sin(s)}  }{ \sin(s)} \; d s \to \int_{0}^{\frac{\pi}{2}} \frac{ 1 }{ \sin(s)} \; d s \geq 
 \int_{0}^{\frac{\pi}{2}} \frac{1}{s} \; d s = + \infty
\end{align*}
as $\lambda\to -\infty$, and hence $k_\lambda \to -\infty$ as $\lambda \to - \infty$.
\end{proof}

As a first step towards the study of the operator $B_\lambda$ on the circle we show properties of $B_0$ in the following lemma.

\begin{lem}\label{Lemma_T_0}
Consider the operator $B_0$ in~\eqref{blambdadef}, i.e., 
\begin{align*}
 (B_0 h) (x) & = \lim_{\delta \searrow 0} \bigg[ \int_{\Sigma \setminus I_\delta^\Sigma (x)} h(y) \frac{1}{4\pi|x-y|} \; d \sigma(y) + h(x) \frac{\ln \delta }{2\pi} \bigg], \quad h \in C^{0, 1} (\Sigma).
\end{align*}
Then the following assertions hold.
\begin{enumerate}
\item $B_0$ is a well-defined, essentially selfadjoint operator in $L^2(\Sigma)$.
\item $\overline{B_0}$ is bounded from above, has a compact resolvent, and its eigenvalues $\nu_k (0)$, $k = 1, 2, \dots$, ordered nonincreasingly and counted with multiplicities, are given by
\begin{align*}
 \nu_1 (0) = \frac{\ln(4R)}{2\pi}, \quad \nu_{2k} (0) = \nu_{2k+1} (0) = \frac{\ln(4R)}{2\pi} - \frac{1}{\pi} \sum_{j=1}^k  \frac{1}{2j-1}.
\end{align*}
\end{enumerate}
\end{lem}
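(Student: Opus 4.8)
The plan is to diagonalize $B_0$ explicitly using the rotational symmetry of the circle. Parametrize $\Sigma$ by arc length as in the text, so that $L^2(\Sigma)$ is unitarily equivalent to $L^2(0,L)$ with the orthonormal basis $e_n(t) = L^{-1/2} e^{2\pi i n t/L}$, $n\in\Z$. Since the integral kernel $\frac{1}{4\pi|\sigma(s)-\sigma(t)|}$ depends only on $|s-t|$ (via \eqref{eq:chord}), the operator $B_0$ — including the regularizing limit — commutes with the shift $t\mapsto t+a$ on the circle, hence is diagonal in this Fourier basis. First I would show that each $e_n$ lies in $\dom B_0$ (it is Lipschitz, indeed smooth) and compute $B_0 e_n = \mu_n e_n$ by evaluating $(B_0 e_n)(\sigma(0))$: using \eqref{eq:chord} and the substitution $s=\pi t/L$, the eigenvalue is
\[
 \mu_n = \lim_{\delta\searrow 0}\left[\int_{\delta/(2R)}^{\pi-\delta/(2R)} \frac{\cos(2\pi n t/L)}{4\pi\sin(\pi t/L)}\Big|_{t = Ls/\pi}\; \frac{2R}{\cdot}\,ds + \frac{\ln\delta}{2\pi}\right],
\]
which after the computation of Lemma~\ref{Lemma_klambda} (the case $n=0$ giving exactly $k_0$) reduces to a classical integral. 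The relevant closed form is
\[
 \int_0^{\pi/2}\frac{\cos(2n s) - 1}{2\pi\sin s}\,ds = -\frac{1}{\pi}\sum_{j=1}^{|n|}\frac{1}{2j-1},
\]
obtained by telescoping $\cos(2ns)-1 = \sum_{j=1}^{|n|}(\cos(2js)-\cos(2(j-1)s))$ and using $\int_0^{\pi/2}\frac{\cos(2js)-\cos(2(j-1)s)}{\sin s}\,ds = -\frac{2}{2j-1}$ (a standard Dirichlet-type integral). Adding $\frac{\ln(4R)}{2\pi}$ from $k_0$ gives $\mu_n = \frac{\ln(4R)}{2\pi} - \frac1\pi\sum_{j=1}^{|n|}\frac{1}{2j-1}$, which is real, depends only on $|n|$, and tends to $-\infty$.

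Next I would assemble the operator-theoretic conclusions. Define $\widehat{B_0}$ to be the selfadjoint multiplication operator in $\ell^2(\Z)$ (equivalently $L^2(\Sigma)$) with eigenvalues $\mu_n$ on its maximal domain; since $\mu_n\to-\infty$, this operator is bounded from above by $\mu_0 = \frac{\ln(4R)}{2\pi}$ and has compact resolvent. The trigonometric polynomials form a core for $\widehat{B_0}$ (the standard core for a diagonal operator), and $B_0$ agrees with $\widehat{B_0}$ on trigonometric polynomials; since these are dense in $C^{0,1}(\Sigma)$-closure sense sufficiently to conclude, one gets $B_0 \subset \widehat{B_0}$ with $B_0$ containing a core of the selfadjoint operator $\widehat{B_0}$, hence $B_0$ is essentially selfadjoint and $\overline{B_0} = \widehat{B_0}$. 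This gives both (i) and (ii): reindexing the $\mu_n$ in nonincreasing order, the value $\mu_0$ appears once (it is $\nu_1(0)$) and each $\mu_n$ with $|n|=k\geq 1$ appears twice (from $\pm n$), yielding $\nu_{2k}(0)=\nu_{2k+1}(0) = \frac{\ln(4R)}{2\pi}-\frac1\pi\sum_{j=1}^k\frac{1}{2j-1}$.

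The main obstacle I anticipate is \emph{justifying that $B_0$ is well-defined on all of $C^{0,1}(\Sigma)$ and that trigonometric polynomials form a core} — i.e., the soft functional-analytic bookkeeping rather than the eigenvalue computation. For well-definedness one must check that the defining limit in \eqref{blambdadef} converges for every Lipschitz $h$: write $h(y) = h(x) + (h(y)-h(x))$; the $h(x)$-term produces exactly the limit $k_0$ of Lemma~\ref{Lemma_klambda}, while the remainder $\int_\Sigma \frac{h(y)-h(x)}{4\pi|x-y|}\,d\sigma(y)$ is an absolutely convergent integral by the Lipschitz bound $|h(y)-h(x)|\lesssim |x-y|$, so the $\ln\delta$ counterterm is exactly what is needed. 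That this defines a symmetric operator follows because the kernel is real and symmetric and the counterterm is diagonal; that the closure is the full diagonal operator $\widehat{B_0}$ then follows from the fact that $B_0$ (being symmetric, and agreeing with the essentially selfadjoint diagonal operator on the dense set of trigonometric polynomials) has a selfadjoint extension that is a restriction of $\widehat{B_0}$, forcing equality. Beyond this, the proof is elementary trigonometric calculus.
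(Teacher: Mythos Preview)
Your approach is correct and essentially the same as the paper's: diagonalize $B_0$ in the Fourier basis on the circle, compute the eigenvalues explicitly, and deduce essential selfadjointness from the fact that a symmetric operator whose domain contains an orthonormal basis of eigenvectors is essentially selfadjoint. The only cosmetic differences are that the paper works with the real $\sin/\cos$ basis and evaluates the key integral via tables rather than your telescoping sum, and your well-definedness argument (split $h(y)=h(x)+(h(y)-h(x))$ and invoke Lemma~\ref{Lemma_klambda}) is exactly what the paper does.
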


\begin{proof}
Let $h\in C^{0,1}(\Sigma)$. For every $x \in \Sigma$ we can write
\begin{align*}
\begin{split}
 (B_0 h) (x) &= \int_\Sigma \frac{h(y)-h(x)}{4\pi|x-y|} d \sigma(y) \\
 &\qquad\quad + h(x) \lim_{\delta\searrow 0} \bigg[ \int_{\Sigma \setminus I_\delta^\Sigma (x)} \frac{1}{4\pi|x-y|} d \sigma(y) + \frac{\ln \delta }{2\pi} \bigg] .
\end{split}
 \end{align*}
Note that the first integral exists due to the fact that $h$ is Lipschitz continuous.
According to Lemma~\ref{Lemma_klambda} (for $\lambda=0$) we can write the above equation as
\begin{align}\label{eq:DarstellungT_0}
 (B_0 h) (x) = \int_\Sigma \frac{h(y)-h(x)}{4\pi|x-y|} d \sigma(y) + h(x) \frac{\ln(4R)}{2\pi},
\end{align}
where we have used $k_0 = \frac{\ln(4R)}{2\pi}$. It follows directly
\begin{align*}
 \left| (B_0 h) (x) \right| \leq \frac{R}{2} L_h + \frac{\ln(4R)}{2\pi}\|h\|_\infty,
\end{align*}
where $L_h$ is a Lipschitz constant of $h$. Thus $B_0$ is a well-defined operator in $L^2(\Sigma)$.

To show the symmetry of $B_0$ let $g, h\in C^{0,1}(\Sigma)$ be arbitrary. Using \eqref{eq:DarstellungT_0} we get
\begin{align*}
 \langle B_0 h, g \rangle_{L^2 (\Sigma)} & - \langle h, B_0 g \rangle_{L^2 (\Sigma)} \\
 & = \Big\langle  \Big[B_0-\frac{\ln(4R)}{2\pi} \Big]h,g \Big\rangle_{L^2 (\Sigma)} - \Big\langle h,\Big[B_0-\frac{\ln(4R)}{2\pi}\Big]  g \Big\rangle_{L^2 (\Sigma)}  \\
 & = \int_\Sigma \bigg( \int_\Sigma \frac{h(y)-h(x)}{4\pi|x-y|} \; d \sigma(y)  \bigg)   \overline{g(x)} d \sigma(x) \\
 & \quad - \int_\Sigma h(y) \overline{ \bigg( \int_\Sigma \frac{g(x)-g(y)}{4\pi|x-y|} \; d \sigma(x) \bigg) } d \sigma(y)\\
 & = \int_\Sigma \int_\Sigma \frac{h(y)\overline{g(y)}-h(x)\overline{g(x)}}{4\pi|x-y|} \; d \sigma(y) d \sigma(x) = 0,
\end{align*}
where the last equality follows from the fact that the integrand is skew-symmetric with respect to $x, y$. Thus $B_0$ is symmetric. 

Next we calculate the eigenvalues of $B_0$; this will also lead us to the essential 
selfadjointness of $B_0$. Consider the functions $h_k$ defined by $h_k (x) = \sin(kt/R)$ with $x=\sigma(t)$ and $k\in\N$. Then by~\eqref{eq:DarstellungT_0} and~\eqref{eq:chord} we have
\begin{align*}
\begin{split}
 \Big( \Big[ B_0 - \frac{\ln(4R)}{2\pi} \Big] h_k \Big)(x) &= \int_\Sigma  \frac{h_k(y)-h_k(x)}{4\pi|x-y|} d \sigma(y)\\
 &= \int_0^L \frac{ \sin(ks/R)-\sin(kt/R)}{4\pi\cdot 2 R \sin\li \frac{|s-t|}{2R}\re} d s.
\end{split}
 \end{align*}
Due to the identity $\sin(ks/R)-\sin(kt/R) = 2 \sin ( \frac{ks-kt}{2R} ) \cos ( \frac{ks+kt}{2R} )$ this leads to
\begin{align}\label{calc1}
\begin{split}
 \Big( \Big[ B_0 - \frac{\ln(4R)}{2\pi} \Big] h_k \Big)(x) & = \int_{0}^{L} \frac{  \sin \big( \frac{k(s-t)}{2R} \big) \cos \big( \frac{k(s+t)}{2R}\big) }{4\pi R \sin \big( \frac{|s-t|}{2R} \big) } d s . 
\end{split}
\end{align}
We split the interval of integration into two parts and obtain with the substitution $z = s - t + L$ for the first integral
\begin{align}\label{calc1_1}
\begin{split}
 &\int_{0}^{t} \frac{\sin \big( \frac{k(s-t)}{2R} \big) \cos \big( \frac{k(s+t)}{2R} \big) }{4\pi R \sin \big( \frac{t-s}{2R} \big) } d s \\
 & \qquad\qquad = \int_{L-t}^{L} \frac{  \sin \big( \frac{k(z-L)}{2R} \big) \cos \big( \frac{k(z-L+2t)}{2R} \big) }{4\pi R \sin \big( \frac{L-z}{2R} \big) } d z  \\
 & \qquad\qquad = \int_{L-t}^{L} \frac{  \sin\li \frac{kz}{2R}-k\pi \re \cos\li \frac{kz}{2R}-k\pi+\frac{kt}{R} \re   }{4\pi R \sin\li \pi-\frac{z}{2R} \re} d z \\
 & \qquad\qquad = \int_{L-t}^{L} \frac{  \sin\li \frac{kz}{2R} \re \cos\li \frac{kz}{2R}+\frac{kt}{R} \re   }{4\pi R \sin\li \frac{z}{2R} \re} d z  ,
\end{split}
\end{align}
where we have used in the last step that $\sin$ is an odd function and that the formulas $\sin(x+\pi) = -\sin(x)$ and $\cos(x+\pi) = -\cos(x)$ hold for 
all $x \in \R$. For the remaining second integral the substitution $z = s - t$ yields
\begin{align}\label{calc1_2}
 \int_t^L \frac{  \sin \big( \frac{k(s-t)}{2R} \big) \cos \big( \frac{k(s+t)}{2R} \big) }{4\pi R \sin \big( \frac{s-t}{2R} \big) } d s = \int_0^{L-t} \frac{  \sin\li \frac{kz}{2R}\re \cos\li \frac{kz}{2R} + \frac{kt}{R}\re   }{4\pi R \sin\li \frac{z}{2R}\re} d z .
\end{align}
With the help of~\eqref{calc1_1} and~\eqref{calc1_2} and the substitution $s = z/(2 R)$ the identity~\eqref{calc1} implies 
\begin{align}\label{calc2}
\begin{split}
 & \Big( \Big[ B_0 - \frac{\ln(4R)}{2\pi} \Big] h_k \Big) (x) \\
 & \qquad\quad = \int_{0}^{L} \frac{  \sin\li \frac{kz}{2R} \re \cos\li \frac{kz}{2R}+\frac{kt}{R} \re   }{4\pi R \sin\li \frac{z}{2R} \re} \; d z \\
 & \qquad\quad = \int_{0}^{\pi} \frac{  \sin(ks) \cos\li ks + \frac{kt}{R}\re }{2\pi\sin(s)} \; d s \\
 & \qquad\quad = \int_{0}^{\pi} \frac{ \sin(ks) }{ 2\pi\sin(s) } \Big[ \cos(ks) \cos \Big(\frac{kt}{R} \Big)  - \sin(ks)  \sin \Big( \frac{kt}{R} \Big) \Big] d s \\
 & \qquad\quad = - \sin \Big( \frac{kt}{R} \Big) \int_0^\pi \frac{  \sin^2(ks)  }{2\pi\sin(s)} d s,
\end{split}
\end{align}
where
\begin{equation*}
 \int_{0}^{\pi} \frac{ \sin(ks) \cos(ks)}{ 2\pi\sin(s) }  d s =0
\end{equation*}
was used in the last step. Furthermore, using the identity $2\sin^2(ks) = 1 - \cos(2ks)$ and the indefinite integrals given in~\cite[2.526~1. and 2.539~4.]{GR} we get
\begin{align*}
 \int_{0}^{\pi} \frac{  \sin^2(ks)  }{2\pi \sin(s)} \; d s & = \frac{1}{4\pi}  \int_{0}^{\pi} \frac{1}{\sin(s)}  - \frac{ \cos(2ks) }{\sin(s)} \; d s \\
 & = - \frac{1}{2\pi} \sum_{j=1}^k  \frac{ \cos[(2j-1)s] }{2j-1} \bigg|_{0}^\pi \\
 & = \frac{1}{\pi} \sum_{j=1}^k \frac{1}{2j-1}.
\end{align*}
Hence~\eqref{calc2} yields
\begin{align}\label{eq:EF1}
 \Big( \Big[ B_0 - \frac{\ln(4R)}{2\pi} \Big] h_k \Big) (x) = - \bigg( \frac{1}{\pi} \sum_{j=1}^k  \frac{1}{2j-1} \bigg) h_k(x).
\end{align}
By an analogous computation we see that also
\begin{align}\label{eq:EF2}
 \Big( \Big[ B_0 - \frac{\ln(4R)}{2\pi} \Big] \widetilde h_k \Big) (x) = - \bigg( \frac{1}{\pi} \sum_{j=1}^k  \frac{1}{2j-1} \bigg) \widetilde h_k(x),
\end{align}
where $\widetilde h_k (x) = \cos(kt/R)$ with $x=\sigma(t)$. Moreover, for the constant function $h (x) = 1$ on $\Sigma$ we clearly have 
\begin{align}\label{eq:EF3}
 \Big[B_0 - \frac{\ln(4R)}{2 \pi} \Big] h = 0.
\end{align}
Since the functions $h, h_k, \widetilde h_k$ are eigenfunctions of $B_0$ by~\eqref{eq:EF1},~\eqref{eq:EF2} and~\eqref{eq:EF3} and span a dense 
subspace of $L^2 (\Sigma)$, it follows that the symmetric operator $B_0$ is actually essentially selfadjoint in $L^2 (\Sigma)$. 
Furthermore, by~\eqref{eq:EF1},~\eqref{eq:EF2} and~\eqref{eq:EF3}, the selfadjoint closure $\overline{B_0}$ has a pure point spectrum and its eigenvalues, 
counted with multiplicities, are given by $\nu_k(0)$, $k=1,2,\dots$, in item~(ii). Since these eigenvalues are bounded from above and converge to 
$- \infty$ as $k \to + \infty$, it follows that $\overline{B_0}$ is bounded from above and has a compact resolvent. 
\end{proof}

Let us now turn to the operator $B_\lambda$ on the circle for general $\lambda < 0$.
\begin{lem}\label{Lemma_T_lambda}
Let $\lambda \leq 0$, let $\Sigma$ be a circle of radius $R$ and let 
$B_\lambda$ be defined in~\eqref{blambdadef}.
Then the following assertions hold.
\begin{enumerate}
  \item $B_\lambda$ is a well-defined, essentially selfadjoint operator in $L^2 (\Sigma)$ and the identity $\dom \overline{B_\lambda} = \dom \overline{B_0}$ holds.
  \item $\overline{B_\lambda}$ is bounded from above and has a compact 
resolvent.
  \item The eigenvalues $\nu_k (\lambda)$ of $\overline{B_\lambda}$, $k 
= 1, 2, \dots$, ordered nonincreasingly and counted with multiplicities, 
satisfy
  \begin{align*}
   \nu_k (\lambda) = - \frac{\ln k}{2\pi} + O(1) \quad \text{as} \quad k 
\to + \infty  .
  \end{align*}
  \item The largest eigenvalue $\nu_1(\lambda)$ of $\overline{B_\lambda}$ is given by
$k_\lambda$ in Lemma~\ref{Lemma_klambda}. In 
particular, $\nu_k (\lambda) \to - \infty$ as $\lambda \to - \infty$, $k 
= 1, 2, \dots$. The eigenspace corresponding to $\nu_1(\lambda)$ is given by the constant functions on $\Sigma$.
\end{enumerate}
\end{lem}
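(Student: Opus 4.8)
The plan is to realize $B_\lambda$ as a bounded selfadjoint perturbation of $B_0$ and then to read off (i)--(iv) from Lemma~\ref{Lemma_T_0} and Lemma~\ref{Lemma_klambda}. Subtracting the two instances of~\eqref{blambdadef} for $B_\lambda$ and $B_0$, the regularizing terms $h(x)\tfrac{\ln\delta}{2\pi}$ cancel and what is left is the integral operator with kernel $\tfrac{e^{-\sqrt{-\lambda}|x-y|}-1}{4\pi|x-y|}$, which is bounded on $\Sigma\times\Sigma$ since the singularity on the diagonal is removed. Hence the limit in~\eqref{blambdadef} exists for every $h\in C^{0,1}(\Sigma)$ and
\begin{equation*}
 B_\lambda h=B_0 h+C_\lambda h,\qquad h\in C^{0,1}(\Sigma),
\end{equation*}
where $C_\lambda$ denotes that integral operator; since its kernel is bounded, real and symmetric, $C_\lambda$ is a bounded selfadjoint (in fact Hilbert--Schmidt) operator in $L^2(\Sigma)$. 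By~\eqref{eq:chord} the kernel of $C_\lambda$ depends on $x=\sigma(s)$, $y=\sigma(t)$ only through $|s-t|$ modulo $L$, so $C_\lambda$ is a convolution operator on the circle; this is used below for (iv).

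For (i): by Lemma~\ref{Lemma_T_0}~(i) the set $C^{0,1}(\Sigma)$ is a core for $\overline{B_0}$; since $C_\lambda$ is bounded, $C^{0,1}(\Sigma)$ is then also a core for the selfadjoint operator $\overline{B_0}+C_\lambda$, which moreover agrees with $B_\lambda$ on $C^{0,1}(\Sigma)$. Passing to closures yields $\overline{B_\lambda}=\overline{B_0}+C_\lambda$, so $B_\lambda$ is essentially selfadjoint and $\dom\overline{B_\lambda}=\dom\overline{B_0}$. Assertion (ii) is then immediate, since adding the bounded selfadjoint operator $C_\lambda$ preserves both boundedness from above and compactness of the resolvent, which hold for $\overline{B_0}$ by Lemma~\ref{Lemma_T_0}~(ii). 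For (iii): the explicit eigenvalues in Lemma~\ref{Lemma_T_0}~(ii), together with $\sum_{j=1}^{k}\tfrac{1}{2j-1}=\tfrac12\ln k+O(1)$ and $\ln(2k),\ln(2k+1)=\ln k+O(1)$, give $\nu_k(0)=-\tfrac{\ln k}{2\pi}+O(1)$ as $k\to+\infty$; the variational (min--max) characterisation of the eigenvalues of $\overline{B_0}$, which is bounded from above with discrete spectrum, combined with $\|C_\lambda\|<\infty$, yields $|\nu_k(\lambda)-\nu_k(0)|\le\|C_\lambda\|$ for all $k$, whence $\nu_k(\lambda)=-\tfrac{\ln k}{2\pi}+O(1)$.

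For (iv): because $C_\lambda$ is a convolution operator on the circle, the functions $h_k(x)=\sin(kt/R)$ and $\widetilde h_k(x)=\cos(kt/R)$ (with $x=\sigma(t)$) and the constant function $\mathbbm{1}$, which diagonalize $\overline{B_0}$ in the proof of Lemma~\ref{Lemma_T_0}, are also eigenfunctions of $C_\lambda$, and $h_k$, $\widetilde h_k$ have a common $C_\lambda$-eigenvalue; hence they are eigenfunctions of $\overline{B_\lambda}=\overline{B_0}+C_\lambda$, and since they span a dense subspace and $\overline{B_\lambda}$ has compact resolvent, $\overline{B_\lambda}$ is diagonalized by this orthogonal system. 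Write $\mu_0(\lambda)$ for the eigenvalue of $\mathbbm{1}$ and $\mu_k(\lambda)$ for the common eigenvalue of $h_k$ and $\widetilde h_k$. Evaluating $B_\lambda\mathbbm{1}=\mu_0(\lambda)\mathbbm{1}$ at a point of $\Sigma$ and invoking Lemma~\ref{Lemma_klambda} gives $\mu_0(\lambda)=k_\lambda$. Evaluating the eigenvalue equations for $\mathbbm{1}$ and for $\widetilde h_k$ at $x=\sigma(0)$, where both functions equal $1$, the terms $\tfrac{\ln\delta}{2\pi}$ cancel and the factor $1-\cos(kt/R)$ removes the singularity, so by~\eqref{eq:chord}
\begin{equation*}
 \mu_0(\lambda)-\mu_k(\lambda)=\int_0^L\bigl(1-\cos(kt/R)\bigr)\,\frac{e^{-\sqrt{-\lambda}\,2R\sin(\pi t/L)}}{8\pi R\sin(\pi t/L)}\,dt>0,\qquad k\ge 1,
\end{equation*}
the strict positivity being clear since the integrand is nonnegative and positive on a set of positive measure. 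Therefore $\nu_1(\lambda)=\mu_0(\lambda)=k_\lambda$, its eigenspace is spanned by the constants, and $\nu_k(\lambda)\le\nu_1(\lambda)=k_\lambda\to-\infty$ as $\lambda\to-\infty$ by Lemma~\ref{Lemma_klambda}.

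The only step requiring genuine care is the first one: checking that the $\ln\delta$-regularizations cancel exactly and that the remaining limits collapse to the absolutely convergent integral defining $C_\lambda$. Once the decomposition $B_\lambda=B_0+C_\lambda$ with $C_\lambda$ bounded and selfadjoint is in hand, the rest is routine perturbation theory combined with the explicit computations already carried out on the circle in Lemmas~\ref{Lemma_klambda} and~\ref{Lemma_T_0}.
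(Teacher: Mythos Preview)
Your proof is correct and proceeds along the same lines as the paper's: both write $B_\lambda$ as $B_0$ plus the bounded selfadjoint integral operator with kernel $\tfrac{e^{-\sqrt{-\lambda}|x-y|}-1}{4\pi|x-y|}$ (the paper calls this $-M_\lambda$), and items (i)--(iii) then follow by identical perturbation arguments and the same asymptotics for $\nu_k(0)$.

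The only genuine difference is in (iv). The paper establishes $\nu_1(\lambda)=k_\lambda$ via a quadratic form computation: writing $(B_\lambda-k_\lambda)h$ as an integral with the kernel $\tfrac{e^{-\sqrt{-\lambda}|x-y|}}{4\pi|x-y|}$ acting on $h(y)-h(x)$, symmetrizing in $x,y$ yields
\[
2\langle(B_\lambda-k_\lambda)h,h\rangle_{L^2(\Sigma)}
=-\int_\Sigma\int_\Sigma |h(y)-h(x)|^2\,\frac{e^{-\sqrt{-\lambda}|x-y|}}{4\pi|x-y|}\,d\sigma(y)\,d\sigma(x)\le 0,
\]
with equality iff $h$ is constant. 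You instead observe that $C_\lambda$ is a convolution operator on the circle, so the Fourier basis diagonalizes $\overline{B_\lambda}=\overline{B_0}+C_\lambda$ as well, and then check directly that $\mu_0(\lambda)-\mu_k(\lambda)>0$ for $k\ge 1$ by evaluating at $\sigma(0)$. Both arguments are short and natural; the paper's symmetrization trick is slightly more robust (it does not explicitly use the Fourier diagonalization of $C_\lambda$), while your route has the advantage of making the full spectral decomposition of $\overline{B_\lambda}$ explicit along the way.
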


\begin{proof}
Note first that the operator $B_\lambda$ can be written as
\begin{align}\label{eq:BlambdaDecomp}
  B_\lambda = B_0 - M_\lambda,
\end{align}
where 
\begin{align*}
  (M_\lambda h)(x) = \int_\Sigma h(y) 
\frac{1-e^{-\sqrt{-\lambda}|x-y|}}{4\pi|x-y|} d \sigma(y), \quad h \in 
L^2 (\Sigma).
\end{align*}
The integral operator $M_\lambda$ has a real, symmetric kernel, which is square integrable since for all $x, y \in \Sigma$ there 
exists $\xi \in [ -\sqrt{-\lambda}|x-y|,0 ]$ with
\begin{align*}
  \bigg|   \frac{1-e^{-\sqrt{-\lambda}|x-y|}}{4\pi|x-y|} \bigg| = \frac{\big| e^0-e^{-\sqrt{-\lambda}|x-y|} \big|}{4\pi|x-y|} = \frac{ e^\xi \big| 0 - (- \sqrt{-\lambda}|x-y| ) \big|}{4\pi|x-y|} \leq \frac{ 
\sqrt{-\lambda} }{4\pi}.
\end{align*}
Thus $M_\lambda$ is a compact, selfadjoint operator in $L^2 (\Sigma)$.
Hence, due to Lemma~\ref{Lemma_T_0} 
and~\eqref{eq:BlambdaDecomp} $B_\lambda$ is well-defined and essentially 
selfadjoint in $L^2 (\Sigma)$ with
\begin{align}\label{eq:BlambdaClosureDecomp}
  \overline{B_\lambda} = \overline{B_0} - M_\lambda.
\end{align}
In particular, $\overline{B_\lambda}$ has a compact resolvent and $\dom 
\overline{B_\lambda} = \dom \overline{B_0}$, which shows~(i).

Next we show that $\overline{B_\lambda}$ is bounded from above by the 
number $k_\lambda$ defined in Lemma~\ref{Lemma_klambda}. For every $h 
\in C^{0,1}(\Sigma)$ and $x \in \Sigma$ we can write
\begin{align*}
(B_\lambda h)(x) = \int_\Sigma \big[ h(y)-h(x) \big] \frac{ 
e^{-\sqrt{-\lambda}|x-y|} }{4\pi|x-y|} d \sigma(y)  +
k_\lambda \cdot h(x),
\end{align*}
where again the integral exists due to the Lipschitz continuity of $h$. Hence
\begin{align*}
  \bigl\langle (B_\lambda - k_\lambda) h,h \bigr\rangle_{L^2(\Sigma)} & = \int_\Sigma \bigg( \int_\Sigma \big[ h(y)-h(x) \big] \frac{ e^{-\sqrt{-\lambda}|x-y|} }{4\pi|x-y|} \; d \sigma(y) \bigg) \overline{h(x)} d \sigma(x)\\
  & = \int_\Sigma \int_\Sigma \big[ h(y) - h(x) \big] \frac{ e^{-\sqrt{-\lambda}|x-y|} }{4\pi|x-y|} \overline{h(x)} d \sigma(y) d \sigma(x)\\
  & = - \int_\Sigma \int_\Sigma \big[ h(y)-h(x) \big] \frac{ e^{-\sqrt{-\lambda}|x-y|} }{4\pi|x-y|} \overline{h(y)} d \sigma(y) d \sigma(x),
\end{align*}
where in the last step we first changed the roles of $x$ and $y$ and 
then the order of integration. Addition of the last two lines yields
\begin{align*}
\begin{split}
  &2 \bigl\langle (B_\lambda-k_\lambda) h,h \bigr\rangle_{L^2(\Sigma)} \\
  &\qquad = \int_\Sigma \int_\Sigma \big[ 
h(y)-h(x) \big] \frac{ e^{-\sqrt{-\lambda}|x-y|} }{4\pi|x-y|} \left[ 
\overline{h(x)} - \overline{h(y)} \right] d \sigma(y) d \sigma(x) \\ 
& \qquad\leq 0
\end{split}
\end{align*}
and, hence, $\langle B_\lambda h,h\rangle_{L^2(\Sigma)} \leq k_\lambda \langle h,h\rangle_{L^2(\Sigma)}$ for all 
$h\in C^{0,1}(\Sigma)$, with equality if and only if $h$ is constant, that is, 
$B_\lambda$ (and, thus, $\overline{B_\lambda}$) is bounded from above by 
$k_\lambda$, which shows~(ii). Moreover it follows $\nu_1 (\lambda) = 
k_\lambda$. By Lemma~\ref{Lemma_klambda} this implies $\nu_1 (\lambda) 
\to - \infty$ as $\lambda \to - \infty$ and thus $\nu_k (\lambda) \to - 
\infty$ as $\lambda \to - \infty$ for all $k$. This finishes the proof 
of~(iv).

It remains to verify the asymptotic behaviour of the eigenvalue $\nu_k 
(\lambda)$ for $k \to + \infty$ as claimed in~(iii). According 
to~\cite[Equation 4.1.32]{AS} we have
\begin{align*}
  \sum_{j=1}^k \frac{1}{j} = \ln(k) + \gamma + o(1) \quad \text{as} \quad k \to + \infty,
\end{align*}
where $\gamma \approx 0.577216$ denotes the Euler--Mascheroni constant. Hence
\begin{align*}
\begin{split}
\sum_{j=1}^k \frac{1}{2j-1}
& = \sum_{j=1}^{2 k} \frac{1}{j} - \frac{1}{2} \sum_{j=1}^k \frac{1}{j}
  = \ln(2k) + \gamma - \frac{\ln(k)  + \gamma}{2} + o(1)\\
& = \frac{\gamma}{2} + \frac{\ln(4k)}{2} + o(1)
\quad \text{as} \quad  k \to +\infty .
\end{split}
\end{align*}
By Lemma~\ref{Lemma_T_0}~(ii) for the eigenvalues of
$\overline{B_0}$ this implies
\begin{align}\label{eq:asympKreis}
\begin{split}
  \nu_{2k} (0) & = \frac{\ln(4R)}{2\pi} - \frac{1}{\pi} \sum_{j=1}^k  
\frac{1}{2j-1}
  = \frac{\ln(4R)}{2\pi} - \frac{\gamma}{2\pi} - \frac{\ln(4k)}{2\pi} + 
o(1)  \\
  & = -\frac{\ln k}{2\pi} + \frac{\ln R - \gamma}{2\pi} + o(1)
= -\frac{\ln(2k)}{2\pi} + O(1)
 \quad 
\text{as} \quad k \to + \infty
\end{split}
\end{align}
and consequently
\begin{align}\label{eq:asympKreis2}
\begin{split}
\nu_{2k+1}(0) 
= \nu_{2k}(0)
&=  -\frac{\ln(2k+1)-\ln(\frac{2k+1}{2k} )}{2\pi} + O(1) \\
&=  -\frac{\ln(2k+1)}{2\pi} + O(1)    \quad \text{as} \quad  k \to +\infty .
\end{split}
\end{align}
From~\eqref{eq:BlambdaClosureDecomp} we conclude with the help of the 
min-max principle
\begin{align*}
  \nu_k (0) - \|M_\lambda\| \leq \nu_k (\lambda) \leq \nu_k (0) + 
\|M_\lambda\|, \quad k = 1, 2, \dots.
\end{align*}
The latter together with~\eqref{eq:asympKreis} and~\eqref{eq:asympKreis2} implies
\begin{align*}
  \nu_k (\lambda) = \nu_k (0) + O(1) = -\frac{\ln k}{2\pi} + O(1) \quad 
\text{as} \quad k\rightarrow +\infty,
\end{align*}
which completes the proof of the lemma.
\end{proof}

\subsection{Properties of \boldmath{$B_\lambda$} in the general case}

In this subsection $\Sigma$ is an arbitrary compact, closed, regular $C^2$-curve in $\R^3$ of length $L$ without self-intersections. In the following we explore properties of $B_\lambda$ by using the results of the previous subsection for the case of a circle. This will be done by a perturbation argument. 

Let $\T$ be a circle in $\R^3$ with radius $R = \frac{L}{2 \pi}$ which is parametrized with respect to the arc length by a function $\tau : [0, L] \to \R^3$. In order to distinguish the operators $B_\lambda$ on $\Sigma$ from those on the circle $\T$ we denote the latter by $B_\lambda^\T$. Moreover, recall that $\sigma : [0, L] \to \R^3$ is an arc length parametrization of $\Sigma$. We define an operator $D_\lambda$ by
\begin{align}\label{Def_Dlambda}
 \big( D_\lambda h \big)( \sigma(t)) = \int_0^L h (\sigma(s)) \bigg[ \frac{e^{-\sqrt{-\lambda}|\sigma(t)-\sigma(s)|}}{4\pi|\sigma(t)-\sigma(s)|} - \frac{e^{-\sqrt{-\lambda}|\tau(t)-\tau(s)|}}{4\pi|\tau(t)-\tau(s)|} \bigg] d s
\end{align}
for $h \in L^2 (\Sigma)$. Furthermore, let $J : L^2(\Sigma) \to L^2(\mathcal{T})$ be the unitary operator defined by
\begin{align}\label{eq:J}
 J h = h\circ\sigma\circ\tau^{-1}, \quad h \in L^2 (\Sigma).
\end{align}
Our studies of $B_\lambda$ will rely on the following properties of $D_\lambda$ .

\begin{lem}\label{lem:Dlambda}
For each $\lambda \leq 0$ the operator $D_\lambda$ in~\eqref{Def_Dlambda} is well-defined, compact and selfadjoint in $L^2 (\Sigma)$, 
and $\| D_\lambda \|\leq C$ holds for all $\lambda \leq 0$ and some $C > 0$
which is independent of $\lambda$. In the special case $\lambda = 0$ the estimate
\begin{align}\label{eq:D0est}
 \|D_0\| \leq d_\Sigma
\end{align}
holds with $d_\Sigma$ given in~\eqref{ddsigma}. Moreover, the relation
\begin{align}\label{eq:perturbation}
 B_\lambda = D_\lambda + J^* B_\lambda^\T J
\end{align}
is satisfied for all $\lambda \leq 0$.
\end{lem}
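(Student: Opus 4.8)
\textbf{Proof plan for Lemma~\ref{lem:Dlambda}.}
The decomposition \eqref{eq:perturbation} is the starting point and is essentially a matter of bookkeeping: for $h\in C^{0,1}(\Sigma)$ and $x=\sigma(t)$, I would write $(B_\lambda h)(x)$ as in \eqref{blambdadef}, split off the circle contribution by adding and subtracting the kernel $\frac{e^{-\sqrt{-\lambda}|\tau(t)-\tau(s)|}}{4\pi|\tau(t)-\tau(s)|}$ inside the integral over $\Sigma\setminus I_\delta^\Sigma(x)$, and observe that the subtracted piece, together with the regularizing term $h(x)\frac{\ln\delta}{2\pi}$, reassembles exactly $(J^*B_\lambda^{\T}J\,h)(x)$ after the change of variables $\sigma\leftrightarrow\tau$ encoded in $J$ (here one uses that $\sigma,\tau$ are both arc length parametrizations on $[0,L]$, so the arc length measures correspond and the interval $I_\delta$ maps to $I_\delta$). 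The difference of the two kernels is continuous near the diagonal — by a $C^2$ argument, $\big|\sigma(t)-\sigma(s)\big|$ and $\big|\tau(t)-\tau(s)\big|$ both behave like $|t-s|+O(|t-s|^3)$, so the difference quotient is bounded — which means the $\delta\to0$ limit in the $D_\lambda$-part exists without any regularization, giving precisely \eqref{Def_Dlambda}. This simultaneously shows $D_\lambda$ is a genuine integral operator with a bounded (indeed continuous) kernel, hence well-defined on all of $L^2(\Sigma)$, compact (Hilbert--Schmidt, the kernel being continuous on the compact set $\Sigma\times\Sigma$), and selfadjoint since the kernel is real and symmetric in $t,s$ (the chord lengths depend only on $|t-s|$ for the circle and $|\sigma(t)-\sigma(s)|$ is symmetric).

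The quantitative bounds are the real content. For general $\lambda\le0$ I would estimate the kernel of $D_\lambda$ pointwise: writing $a=|\sigma(t)-\sigma(s)|$, $b=|\tau(t)-\tau(s)|$ and $\kappa=\sqrt{-\lambda}$, one has to control $\big|\frac{e^{-\kappa a}}{a}-\frac{e^{-\kappa b}}{b}\big|$ uniformly in $\kappa\ge0$. Splitting as $\frac{e^{-\kappa a}-e^{-\kappa b}}{a}+e^{-\kappa b}\big(\frac1a-\frac1b\big)$, the second term is handled by $|a-b|\le C|t-s|^3$ against $ab\ge c|t-s|^2$, giving an $O(|t-s|)$ bound; the first term uses $|e^{-\kappa a}-e^{-\kappa b}|\le \kappa|a-b|$ combined with $e^{-\kappa\min(a,b)}$, and since $\kappa e^{-\kappa a/2}$ is bounded by $C/a$ one again gets a bound of the form $C|t-s|$ times a constant independent of $\kappa$. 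Integrating a uniformly bounded kernel over $\Sigma\times\Sigma$ gives the Hilbert--Schmidt, hence operator, norm bound $\|D_\lambda\|\le C$ with $C$ independent of $\lambda$. For the case $\lambda=0$ the kernel of $D_0$ is exactly $\frac{1}{4\pi|\sigma(t)-\sigma(s)|}-\frac{1}{4\pi|\tau(t)-\tau(s)|}$, so $\|D_0\|\le\|D_0\|_{\mathrm{HS}}$ equals the square root of the double integral defining $d_\Sigma$ in \eqref{ddsigma}, which is precisely the asserted inequality \eqref{eq:D0est}.

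I expect the main obstacle to be the near-diagonal analysis: one must show that the seemingly singular kernels of $B_\lambda$ on $\Sigma$ and on $\T$ have a difference that is not merely integrable but bounded and continuous up to the diagonal, and that this holds \emph{uniformly} in $\lambda$. This rests on the $C^2$-regularity of $\Sigma$: a Taylor expansion of $\sigma$ gives $|\sigma(t)-\sigma(s)|^2=|t-s|^2-\tfrac1{12}\kappa_\Sigma(s)^2|t-s|^4+o(|t-s|^4)$ (using $|\dot\sigma|\equiv1$, $\dot\sigma\cdot\ddot\sigma\equiv0$), and the analogous exact formula \eqref{eq:chord} for the circle, so that $|\sigma(t)-\sigma(s)|-|\tau(t)-\tau(s)|=O(|t-s|^3)$ with constants controlled by $\|\ddot\sigma\|_\infty$ and the fixed curvature $1/R$. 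The only subtlety is the global lower bound $|\sigma(t)-\sigma(s)|\ge c|t-s|$ (with $|t-s|$ measured mod $L$), which follows from compactness, $C^1$-regularity and the absence of self-intersections — it fails only in the limit of points coinciding, where the local estimate takes over. Once these geometric facts are in place, everything else is the routine kernel estimation sketched above.
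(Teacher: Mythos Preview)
Your approach is essentially the same as the paper's, with one tactical difference and one small overreach. The paper bounds the kernel difference via the mean value theorem applied directly to the single function $f(z)=\frac{e^{-\sqrt{-\lambda}z}}{4\pi z}$, obtaining $|f(a)-f(b)|\le|f'(\zeta_{\min})|\,|a-b|$ and then using $e^{-x}(x+1)\le1$ to absorb the $\lambda$-dependence; your two-term splitting achieves the same uniform bound with slightly different bookkeeping. The overreach is in your chord-length estimate: the expansion $|\sigma(t)-\sigma(s)|^2=|t-s|^2-\tfrac{1}{12}\kappa_\Sigma(s)^2|t-s|^4+o(|t-s|^4)$ and hence $|a-b|=O(|t-s|^3)$ require more than the paper's standing $C^2$ hypothesis (you would need $\ddot\sigma$ at least Lipschitz). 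With only $C^2$ one gets $|a-b|\le C|t-s|^2$ from a second-order Taylor remainder, which is what the paper uses. Fortunately this weaker bound still feeds into your splitting without change: $e^{-\kappa b}|1/a-1/b|\le C|t-s|^2/(c|t-s|)^2$ is bounded, and for the exponential piece $\kappa|a-b|e^{-\kappa\min(a,b)}/a\le (C/c)\kappa|t-s|e^{-c\kappa|t-s|}$ is bounded uniformly in $\kappa\ge0$ since $xe^{-x}\le 1/e$. So the argument survives once the correct order is substituted. Everything else---the change-of-variable verification of \eqref{eq:perturbation}, the Hilbert--Schmidt argument for compactness and for the $\lambda=0$ bound, and the global chord lower bound via compactness and non-self-intersection---matches the paper.
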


\begin{proof}
In order to study the integral in the definition~\eqref{Def_Dlambda} of $D_\lambda$ we identify the parametrizations $\sigma, \tau$ of $\Sigma$ and $\T$, respectively, with their $L$-periodic continuations to all of $\R$. Let $s, t \in\R$ with $|s-t|\leq \frac{L}{2}$. Define $f:(0,\infty)\to\R$ via $f(z) = \frac{e^{-\sqrt{-\lambda}z}}{4\pi z}$ for $z > 0$. Then
\begin{align}\label{eq:f'}
 f'(z) = \frac{ -\sqrt{-\lambda}e^{-\sqrt{-\lambda}z}4\pi z    - e^{-\sqrt{-\lambda}z} 4\pi}{ (4\pi z)^2} = - e^{-\sqrt{-\lambda}z}  \frac{ \sqrt{-\lambda} z    +   1 }{ 4\pi z^2},
\end{align}
from which it follows that $f'$ is monotonously nondecreasing on $(0, \infty)$ and, thus, $|f'|$ is monotonously nonincreasing on $(0, \infty)$. Hence with 
$$\zeta_{\min} = \min\big\{|\sigma (t) - \sigma (s)|,|\tau (t) - \tau (s)| \big\}$$ it follows
\begin{align}\label{estimate1}
\begin{split}
& \bigg| \frac{e^{-\sqrt{-\lambda}|{\sigma}(t)-{\sigma}(s)|}}{4\pi|{\sigma}(t)-{\sigma}(s)|} - \frac{e^{-\sqrt{-\lambda}|{\tau}(t)-{\tau}(s)|}}{4\pi|{\tau}(t)-{\tau}(s)|}  \bigg|\\
 &\qquad\qquad\qquad\qquad  \leq |f'(\zeta_{\min})| \cdot \big|  |{\sigma}(t)-{\sigma}(s)|   - |{\tau}(t)-{\tau}(s)|    \big| .
\end{split}
 \end{align}
Note, that there exist $\varepsilon_\sigma>0$ and $\varepsilon_\tau>0$ such that for all $s,t\in\R$ with $|s-t|\leq \frac{L}{2}$
\begin{align*}
 |{\sigma}(s)-{\sigma}(t)| \geq \varepsilon_\sigma |s-t| \quad \text{and} \quad |{\tau}(s)-{\tau}(t)| \geq \varepsilon_\tau |s-t|
\end{align*}
holds. With $\varepsilon:=\min\{\varepsilon_\sigma,\varepsilon_\tau\}>0$ the estimate \eqref{estimate1} can be simplified to
\begin{align}\label{estimate2}
\begin{split}
&\bigg| \frac{e^{-\sqrt{-\lambda}|{\sigma}(t)-{\sigma}(s)|}}{4\pi|{\sigma}(t)-{\sigma}(s)|} - \frac{e^{-\sqrt{-\lambda}|{\tau}(t)-{\tau}(s)|}}{4\pi|{\tau}(t)-{\tau}(s)|}  \bigg| \\
 &\qquad\qquad\qquad\qquad 
  \leq |f'(\varepsilon|s-t|)|  \big|  |{\sigma}(t)-{\sigma}(s)|   - |{\tau}(t)-{\tau}(s)|    \big| .
\end{split}
  \end{align}
Recall that $\Sigma$ is a $C^2$-curve. Hence we get with Taylor's theorem (for each component) for some suitable $\zeta_1$, $\zeta_2$ and $\zeta_3$
\begin{align*}
{\sigma}(t)
&=
\begin{bmatrix}
{\sigma}_1(t)  \\
{\sigma}_2(t)  \\
{\sigma}_3(t)
\end{bmatrix}
=
{\sigma}(s)  +  {\sigma}'(s)(t-s)  +
\begin{bmatrix}
{\sigma}_1''(\zeta_1)\\
{\sigma}_2''(\zeta_2)\\
{\sigma}_3''(\zeta_3)
\end{bmatrix}
\frac{(t-s)^2}{2}.
\end{align*}
With $C_\sigma := \sqrt{ \|{\sigma}_1''\|_\infty^2 + \|{\sigma}_2''\|_\infty^2 + \|{\sigma}_3''\|_\infty^2 }$ and $|{\sigma}'(s)|=1$ it follows
\begin{align*}
|{\sigma}(t)-{\sigma}(s)|
&\leq
|{\sigma}'(s)| \cdot |t-s|  + \left|
\begin{bmatrix}
{\sigma}_1''(\zeta_1)\\
{\sigma}_2''(\zeta_2)\\
{\sigma}_3''(\zeta_3)
\end{bmatrix}\right| \frac{(t-s)^2}{2}
\leq
|t-s| + \frac{C_\sigma}{2} |t-s|^2.
\end{align*}
Analogously we get with $C_\tau := \sqrt{ \|{\tau}_1''\|_\infty^2 + \|{\tau}_2''\|_\infty^2 + \|{\tau}_3''\|_\infty^2 }$
\begin{align*}
|{\tau}(t)-{\tau}(s)|
&\geq
|{\tau}'(s)| \cdot |t-s|  - \left|
\begin{bmatrix}
{\tau}_1''(\xi_1)\\
{\tau}_2''(\xi_2)\\
{\tau}_3''(\xi_3)
\end{bmatrix}\right| \frac{(t-s)^2}{2}
\geq
|t-s| - \frac{C_\tau}{2} |t-s|^2
\end{align*}
for some suitable $\xi_1$, $\xi_2$ and $\xi_3$. Hence
\begin{align*}
|{\sigma}(t)-{\sigma}(s)|   - |{\tau}(t)-{\tau}(s)|
&\leq
\frac{C_\sigma+C_\tau}{2} |t-s|^2  .
\end{align*}
By changing the roles of ${\sigma}$ and ${\tau}$ we observe
\begin{align}\label{estimate3}
\Big| |{\sigma}(t)-{\sigma}(s)|   - |{\tau}(t)-{\tau}(s)| \Big|
\leq
\frac{C_\sigma+C_\tau}{2} |t-s|^2  .
\end{align}
Note that $e^{-x}(x+1)\leq 1$ for $x\geq0$. Together with \eqref{eq:f'}, \eqref{estimate3} and 
$$\widetilde{C}:= \frac{C_\sigma+C_\tau}{8\pi\varepsilon^2}$$ 
the estimate 
\eqref{estimate2} implies
\begin{align}\label{estimate4}
\begin{split}
 &\bigg| \frac{e^{-\sqrt{-\lambda}|{\sigma}(t)-{\sigma}(s)|}}{4\pi|{\sigma}(t)-{\sigma}(s)|} - \frac{e^{-\sqrt{-\lambda}|{\tau}(t)-{\tau}(s)|}}{4\pi|{\tau}(t)-{\tau}(s)|}  \bigg| \\
 &\qquad\qquad\qquad\leq \widetilde{C} e^{-\sqrt{-\lambda}\varepsilon |s-t|}  \big[  \sqrt{-\lambda} \varepsilon |s-t| +  1  \big] \leq \widetilde{C}
 \end{split}
 \end{align}
for all $s,t\in\R$ with $|s-t|\leq \frac{L}{2}$. For arbitrary $s,t\in\R$ there exists $k\in\mathbbm{Z}$ such that $|(s+kL) -t| \leq \frac{L}{2}$. 
As ${\sigma}$ and ${\tau}$ are $L$-periodic it follows that~\eqref{estimate4} holds for all $s,t\in\R$. From~\eqref{estimate4} we conclude
that the integral kernel of the operator $D_\lambda$ is bounded with a bound~$\widetilde C$ independent of $\lambda$. Thus with $C = \widetilde{C} L$, the definition of $D_\lambda$ in \eqref{Def_Dlambda} and estimate \eqref{estimate4} it follows
\begin{align*}
 \| D_\lambda h \|_{L^2(\Sigma)}^2 & \leq \|h\|_{L^2(\Sigma)}^2 
 \int_0^L \int_0^L \bigg| \frac{e^{-\sqrt{-\lambda}|\sigma(t)-\sigma(s)|}}{4\pi|\sigma(t)-\sigma(s)|} - \frac{e^{-\sqrt{-\lambda}|\tau(t)-\tau(s)|}}{4\pi|\tau(t)-\tau(s)|}\bigg|^2 ds \; dt \\
 & \leq  C^2\|h\|_{L^2(\Sigma)}^2
\end{align*}
for all $h \in L^2(\Sigma)$ and $C$ does not depend on $\lambda$. In particular, $D_\lambda$ is a well-defined, 
compact operator in $L^2 (\Sigma)$ whose operator norm can be estimated by a constant independent of $\lambda$. Since the integral kernel of $D_\lambda$ is real and symmetric 
it follows that $D_\lambda$ is selfadjoint. For $\lambda = 0$ the estimate~\eqref{eq:D0est} follows immediately from the definition of $D_\lambda$. 

In order to verify the relation~\eqref{eq:perturbation} note that $h\in C^{0,1}(\Sigma)$ if and only if $\widetilde h := J h \in C^{0,1}(\mathcal{T})$ and in this case
\begin{align*}
 \big(J^* B_\lambda^\T J  h \big)(x) 
 = \lim_{\delta\searrow 0} \bigg[  \int_{\mathcal{T}\setminus I_\delta^\mathcal{T}(\tau(t))} \widetilde{h}(\tilde{y}) \frac{e^{-\sqrt{-\lambda}|\tau(t)-\widetilde{y}|}}{4\pi|\tau(t)-\widetilde{y}|} d \sigma(\widetilde{y}) + \widetilde{h}(\tau(t)) \frac{\ln \delta }{2\pi} \bigg]
\end{align*}
for every $h\in C^{0,1}(\Sigma)$ and $x=\sigma(t)\in\Sigma$. This identity and the definitions of $B_\lambda$ and $D_\lambda$ lead to the relation~\eqref{eq:perturbation}.
\end{proof}

Now we are in the position to prove all properties of $B_\lambda$ which are required for the proofs of the main results of this paper.

\begin{prop}\label{Proposition_Main}
Let $\lambda \leq 0$ and let $B_\lambda$ be given in~\eqref{blambdadef}. Then the following assertions hold.
\begin{enumerate}
 \item $B_\lambda$ is a well-defined, essentially selfadjoint operator in $L^2(\Sigma)$ and the identity $\dom \overline{B_\lambda} = \dom \overline{B_0}$ holds.
 \item $\overline{B_\lambda}$ is bounded from above and has a compact resolvent.
 \item The eigenvalues $\nu_k (\lambda)$ of $\overline{B_\lambda}$, $k = 1, 2, \dots$, ordered nonincreasingly and counted with multiplicities, satisfy
 \begin{align*}
  \nu_k (\lambda) = - \frac{\ln k}{2\pi} + O(1) \quad \text{as} \quad k \to + \infty.
 \end{align*}
 \item For every $k \in \N$ the function $\lambda \mapsto \nu_k (\lambda)$ is continuous and strictly increasing on the 
 interval $(-\infty,0]$ and $\nu_k (\lambda) \to -\infty$ as $\lambda \to-\infty$.
\end{enumerate}
\end{prop}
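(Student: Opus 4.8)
The plan is to derive everything from the perturbation identity~\eqref{eq:perturbation}, namely $B_\lambda = D_\lambda + J^* B_\lambda^\T J$, together with the circle case already established in Lemma~\ref{Lemma_T_lambda}. First I would observe that $J$ is unitary, so $J^* B_\lambda^\T J$ is essentially selfadjoint on $J^*(\dom B_\lambda^\T)$ with the same spectral properties as $B_\lambda^\T$; in particular its closure has compact resolvent, is bounded from above, and its eigenvalues coincide with the $\nu_k(\lambda)$ from Lemma~\ref{Lemma_T_lambda}. By Lemma~\ref{lem:Dlambda} the operator $D_\lambda$ is bounded and selfadjoint with $\|D_\lambda\| \leq C$ uniformly in $\lambda \leq 0$. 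Hence $B_\lambda$ is a bounded selfadjoint perturbation of an essentially selfadjoint operator, so it is itself essentially selfadjoint (on the same core, namely $C^{0,1}(\Sigma)$, which one checks equals $J^*(\text{the circle core})$), with $\overline{B_\lambda} = \overline{J^* B_\lambda^\T J} + D_\lambda$ and $\dom\overline{B_\lambda} = \dom\overline{J^* B_\lambda^\T J} = J^*\dom\overline{B_\lambda^\T}$. Since by Lemma~\ref{Lemma_T_lambda}(i) the domain $\dom\overline{B_\lambda^\T}$ is independent of $\lambda$ and equals $\dom\overline{B_0^\T}$, this gives $\dom\overline{B_\lambda} = \dom\overline{B_0}$, proving~(i). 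Boundedness from above and compactness of the resolvent in~(ii) follow because adding the bounded operator $D_\lambda$ preserves both properties.

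For~(iii), I would apply the min-max principle to the identity $\overline{B_\lambda} = \overline{J^* B_\lambda^\T J} + D_\lambda$ to get $|\nu_k(\lambda) - \nu_k^\T(\lambda)| \leq \|D_\lambda\| \leq C$ for all $k$, where $\nu_k^\T(\lambda)$ are the eigenvalues of $\overline{B_\lambda^\T}$. By Lemma~\ref{Lemma_T_lambda}(iii) we have $\nu_k^\T(\lambda) = -\frac{\ln k}{2\pi} + O(1)$ as $k \to +\infty$, and since $C$ is independent of $\lambda$ the same asymptotics, with a uniform $O(1)$, transfer to $\nu_k(\lambda)$. For~(iv), continuity and strict monotonicity of $\lambda \mapsto \nu_k(\lambda)$ on $(-\infty,0]$: I would write $\overline{B_\lambda} = \overline{B_0} - M_\lambda - (D_0 - D_\lambda)$, or more directly note that for $\lambda < \mu \leq 0$ the difference $\overline{B_\mu} - \overline{B_\lambda}$ is the integral operator with kernel $\frac{e^{-\sqrt{-\mu}|x-y|} - e^{-\sqrt{-\lambda}|x-y|}}{4\pi|x-y|}$, which has a \emph{strictly positive} (hence nonzero) kernel because $\sqrt{-\mu} < \sqrt{-\lambda}$; combined with the fact that the eigenfunctions cannot vanish on a set of positive measure, the min-max principle gives strict monotonicity $\nu_k(\mu) > \nu_k(\lambda)$. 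Continuity follows from $\|\overline{B_\mu} - \overline{B_\lambda}\| \to 0$ as $\mu \to \lambda$ (the kernel converges uniformly, by the mean value estimate used already in Lemma~\ref{Lemma_T_lambda}), together with continuity of eigenvalues under norm-convergent bounded perturbations. Finally $\nu_k(\lambda) \to -\infty$ as $\lambda \to -\infty$ follows from the bound $\nu_k(\lambda) \leq \nu_1(\lambda) = k_\lambda$ on the circle plus $\|D_\lambda\| \leq C$, using $k_\lambda \to -\infty$ from Lemma~\ref{Lemma_klambda}; more precisely $\nu_k(\lambda) \leq \nu_k^\T(\lambda) + C \leq \nu_1^\T(\lambda) + C = k_\lambda + C \to -\infty$.

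The main obstacle I anticipate is the careful bookkeeping around cores and closures: one must verify that $C^{0,1}(\Sigma)$ is genuinely a core for $\overline{B_\lambda}$ and that $J$ maps it onto a core for $B_\lambda^\T$, so that the operator identity~\eqref{eq:perturbation} — which a priori only holds on $C^{0,1}$ — really does pass to the closures in the clean form $\overline{B_\lambda} = \overline{J^* B_\lambda^\T J} + D_\lambda$. Since $D_\lambda$ is everywhere-defined and bounded, this is a standard fact (a bounded selfadjoint perturbation does not change the domain of the closure or the set of cores), but it needs to be stated. The strict monotonicity in~(iv) is the other point requiring a small argument rather than a citation, since it rests on the positivity of the kernel difference together with a unique-continuation-type observation that eigenfunctions of these integral operators do not vanish identically on subsets of positive measure; alternatively one can invoke the strict version of the min-max inequality directly, since $\langle (\overline{B_\mu} - \overline{B_\lambda})\psi, \psi\rangle > 0$ for every nonzero $\psi \in L^2(\Sigma)$ because the kernel is strictly positive. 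Everything else is a routine transfer of the circle results through the unitary $J$ and the uniformly bounded perturbation $D_\lambda$.
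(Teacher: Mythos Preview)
Your treatment of (i)--(iii), and of continuity and the limit $\nu_k(\lambda)\to-\infty$ in~(iv), matches the paper's: everything is transferred from the circle via the unitary~$J$ and the uniformly bounded compact perturbation~$D_\lambda$, followed by min-max.

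The gap is in your argument for strict monotonicity. You claim that because the kernel of $\overline{B_\mu}-\overline{B_\lambda}$ is pointwise strictly positive, one has $\langle(\overline{B_\mu}-\overline{B_\lambda})\psi,\psi\rangle>0$ for every nonzero $\psi\in L^2(\Sigma)$. This implication is false in general: a strictly positive kernel gives a positivity-improving operator in the Perron--Frobenius sense, not a positive-definite one (for a $2\times2$ counterexample take $\bigl(\begin{smallmatrix}1&3\\3&1\end{smallmatrix}\bigr)$, which has positive entries and eigenvalue~$-2$). Your alternative remark about eigenfunctions not vanishing on sets of positive measure does not repair this; the problem is algebraic, not about supports.

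The conclusion you want is nevertheless true, but for a structural reason your argument does not invoke: the difference kernel is the restriction to $\Sigma\times\Sigma$ of $(-\Delta-\mu)^{-1}-(-\Delta-\lambda)^{-1}$, which \emph{is} positive-definite on~$\R^3$ by operator monotonicity of the resolvent. The paper exploits exactly this. It first establishes $(\overline{B_\lambda}-\overline{B_\mu})h=(\gamma_\lambda h-\gamma_\mu h)|_\Sigma$ for $h\in\dom\overline{B_\lambda}$, then uses the resolvent identity to compute
\[
\frac{d}{d\lambda}\,\bigl\langle \overline{B_\lambda} h, h\bigr\rangle_{L^2(\Sigma)} = \|\gamma_\lambda h\|_{L^2(\R^3)}^2,
\]
which is strictly positive for $h\neq 0$ since $\gamma_\lambda$ is injective; min-max then yields $\nu_k(\lambda)<\nu_k(\mu)$ for $\lambda<\mu$. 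An equivalent way to close your gap is to observe directly that
\[
\bigl\langle (\overline{B_\mu}-\overline{B_\lambda})h,h\bigr\rangle_{L^2(\Sigma)}
=\int_{\R^3}\Bigl(\frac{1}{|\xi|^2-\mu}-\frac{1}{|\xi|^2-\lambda}\Bigr)\bigl|\widehat{h\delta_\Sigma}(\xi)\bigr|^2\,d\xi>0
\]
for $h\neq 0$ and $\lambda<\mu\leq 0$; this uses positive-definiteness of the Green's function, not pointwise positivity of the kernel.
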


\begin{proof}
Let $D_\lambda$ be given in~\eqref{Def_Dlambda} and let $J : L^2 (\Sigma) \to L^2 (\T)$ be the unitary operator in~\eqref{eq:J}. Since $D_\lambda$ 
is selfadjoint and compact in $L^2 (\Sigma)$ by Lemma~\ref{lem:Dlambda}, the assertions in~(i) and~(ii) follow directly from~\eqref{eq:perturbation} and Lemma~\ref{Lemma_T_lambda}~(i) and~(ii). Furthermore, by~\eqref{eq:perturbation}, Lemma~\ref{Lemma_T_lambda}~(ii) and~(iv), and Lemma~\ref{lem:Dlambda} there exists 
$C > 0$ independent of $\lambda$ such that for $h \in \dom \overline{B_\lambda}$ we have
\begin{align}\label{eq:estimate99}
\begin{split}
\sk{\overline{B_\lambda}h,h}_{L^2(\Sigma)}
& = \sk{D_\lambda h,h}_{L^2(\Sigma)} + \bigl\langle\overline{ B_\lambda^\T }Jh,Jh\bigr\rangle_{L^2(\mathcal{T})}  \\
& \leq \|D_\lambda\|\cdot\|h\|_{L^2(\Sigma)}^2 + k_\lambda \|Jh\|_{L^2(\mathcal{T})}^2 \\
&\leq (C + k_\lambda) \|h\|_{L^2(\Sigma)}^2, 
\end{split}
\end{align}
where $k_\lambda$ is given in Lemma~\ref{Lemma_klambda}. Since $k_\lambda \to - \infty$ as $\lambda \to - \infty$ by Lemma~\ref{Lemma_klambda} we conclude from \eqref{eq:estimate99} that $\nu_k (\lambda) \to - \infty$ as $\lambda \to - \infty$ for each $k$. From~\eqref{eq:perturbation} and the min-max principle it follows
\[
 \nu_k(\lambda) - C \leq \nu_k^\T (\lambda) \leq \nu_k (\lambda) + C, \quad k = 1, 2, \dots,
\]
where $\nu_k^\T(\lambda)$ denotes the $k$-th eigenvalue of $\overline{ B_\lambda^\T }$.
We obtain with the help of Lemma~\ref{Lemma_T_lambda}~(iii) that
\begin{align*}
 \nu_k(\lambda) = \nu_k^\T (\lambda) + O (1) = -\frac{\ln k}{2\pi} + O(1) \quad \text{as} \quad k \rightarrow + \infty.
\end{align*}
This proves the assertion~(iii). 

In order to show the remaining assertions in~(iv) let $\lambda,\mu \leq 0$ and define the operator $D_{\lambda,\mu} : L^2(\Sigma) \to L^2(\Sigma)$ by
\begin{align*}
(D_{\lambda,\mu} h) (x) = \int_\Sigma  h(y)\frac{ e^{-\sqrt{-\lambda}|x-y|} - e^{-\sqrt{-\mu}|x-y|} }{4\pi|x-y|} \, d\sigma(y),
\quad h \in L^2 (\Sigma).
\end{align*}
As $B_\lambda h-B_\mu h=D_{\lambda,\mu} h$ for all $h\in C^{0,1}(\Sigma)$ it follows that
\begin{align}\label{eq:nochmalPerturbation}
 \overline{B_\lambda}h = \overline{B_\mu}h + D_{\lambda, \mu}h,\qquad h\in\dom\overline{B_\lambda}.
\end{align}
As in the proof of Lemma~\ref{Lemma_T_lambda} one shows that $D_{\lambda,\mu}$ is a compact, selfadjoint operator with
\begin{align*}
 \| D_{\lambda, \mu} \| \leq \frac{ |\sqrt{-\lambda}-\sqrt{-\mu}| }{4\pi} L.
\end{align*}
In particular, $\|D_{\lambda,\mu}\| \to 0$ as $\lambda \to \mu$. From this and~\eqref{eq:nochmalPerturbation} it follows with the min-max principle that $\nu_k (\lambda) \to \nu_k (\mu)$ for all $k$, that is, all the functions $\lambda\mapsto\nu_k(\lambda)$ are continuous.

For the strict monotonicity let $\lambda, \mu < 0$. If $h \in \dom B_\lambda = \dom B_\mu$ it follows from 
the definition of $\gamma_\lambda$ and $\gamma_\mu$ in~\eqref{gammaop} that
\begin{align}\label{eq:uc}
\begin{split}
 \gamma_\lambda h-\gamma_\mu h & =  (-\Delta-\lambda)^{-1}(h\delta_\Sigma) - (-\Delta-\mu)^{-1}(h\delta_\Sigma) \\
 & = (\lambda-\mu)(-\Delta-\lambda)^{-1}(-\Delta-\mu)^{-1}(h\delta_\Sigma),
\end{split}
\end{align}
in particular, $\gamma_\lambda h - \gamma_\mu h \in H^2 (\R^3)$. Note also that $\gamma_\lambda-\gamma_\mu$ is continuous from $L^2(\Sigma)$ to
$H^2(\R^3)$ since $\gamma_\lambda-\gamma_\mu$ is defined on $L^2(\Sigma)$ and is closed as a mapping from $L^2(\Sigma)$ to
$H^2(\R^3)$.
According to Lemma~\ref{Lemma_gammafield} we have
\begin{equation}\label{eq:integral}
  \begin{split}
(\gamma_\lambda h-\gamma_\mu h)(x) & = \int_\Sigma h(s)\frac{e^{-\sqrt{-\lambda}\vert x-s\vert}-e^{-\sqrt{-\mu}\vert x-s\vert}}{4\pi \vert x-s\vert} \, ds
\end{split}
\end{equation}
for almost all $x\in\R^3\setminus\Sigma$. As the integral in \eqref{eq:integral} is continuous with respect to $x$ we obtain~\eqref{eq:integral} for all $x\in\R^3$. In particular, 
\begin{equation}\label{eq:integral2}
 \begin{split}
  (\gamma_\lambda h-\gamma_\mu h)|_\Sigma(x)
&=
\int_\Sigma h(s)\frac{e^{-\sqrt{-\lambda}\vert 
x-s\vert}-e^{-\sqrt{-\mu}\vert x-s\vert}}{4\pi \vert x-s\vert} \, ds\\
&=
(B_\lambda h-B_\mu h)(x)
 \end{split}
\end{equation}
for all $x\in\Sigma$ and $h\in C^{0,1}(\Sigma)=\dom B_\lambda=\dom B_\mu$. If $h\in\dom\overline{B_\lambda}=\dom\overline{B_\mu}$
we can choose a sequence $(h_n)\subset \dom B_\lambda = \dom B_\mu$
with $h_n\rightarrow h$ and $B_\lambda h_n \rightarrow \overline{B_\lambda}h$.
Due to \eqref{eq:integral2} and \eqref{eq:uc} we observe
\begin{align*}
B_\lambda h_n
&=
B_\mu h_n + (\gamma_\lambda h_n-\gamma_\mu h_n)|_\Sigma \\
&=
B_\mu h_n + \big((\lambda-\mu)(-\Delta-\lambda)^{-1}(-\Delta-\mu)^{-1}(h_n\delta_\Sigma)\big)|_\Sigma.
\end{align*}
Since the mapping $h\mapsto h\delta_\Sigma$ is continuous from $L^2(\Sigma)$ to $H^{-2}(\R^3)$ (see \eqref{hdelta}),
 $-\Delta-\lambda$ is an isomorphism between $H^s(\R^3)$ and $H^{s-2}(\R^3)$ for all $s\in\R$, and the 
trace map is continuous from $H^2(\R^3)$ to $L^2(\Sigma)$ we conclude
\begin{align*}
 \overline{B_\lambda} h & = \lim_{n\to\infty} B_\mu h_n + \big((\lambda-\mu)(-\Delta-\lambda)^{-1}(-\Delta-\mu)^{-1}(h\delta_\Sigma)\big)|_\Sigma
\end{align*}
and hence the limit $\lim_{n\rightarrow\infty}B_\mu h_n$ exists and 
equals $\overline{B_\mu} h$. Using the continuity of $\gamma_\lambda-\gamma_\mu$ as a mapping from $L^2(\Sigma)$ into $H^2(\R^3)$, 
the continuity of the trace and \eqref{eq:integral2} we observe
\begin{equation}\label{eq:integral3}
 \begin{split}
  (\gamma_\lambda h-\gamma_\mu h)|_\Sigma
&=
\lim_{n\to\infty} (\gamma_\lambda h_n - \gamma_\mu h_n)|_\Sigma\\
&=
\lim_{n\to\infty} (B_\lambda h_n-B_\mu h_n)\\
&=
\overline{B_\lambda} h - \overline{B_\mu} h
 \end{split}
\end{equation}
for all $h\in\dom\overline{B_\lambda}=\dom\overline{B_\mu}$. From~\eqref{eq:integral3},~\eqref{eq:uc} and~\eqref{hdelta} we obtain
\begin{align*}
&\bigl\langle \big(\overline{B_\lambda} - \overline{B_\mu} \big)h,h\bigr\rangle_{L^2(\Sigma)}\\
&\qquad\qquad=
\bigl\langle (\gamma_\lambda h-\gamma_\mu h)|_\Sigma,h\bigr\rangle_{L^2(\Sigma)}  \\
&\qquad\qquad=
\sk{ \big[  (\lambda-\mu)(-\Delta-\lambda)^{-1}(-\Delta-\mu)^{-1}(h\delta_\Sigma)  \big]|_\Sigma,h}_{L^2(\Sigma)}  \\
&\qquad\qquad=
(\lambda-\mu) \sk{ (-\Delta-\lambda)^{-1}(-\Delta-\mu)^{-1}(h\delta_\Sigma) ,h\delta_\Sigma}_{2,-2}  \\
&\qquad\qquad=
(\lambda-\mu) \sk{ (-\Delta-\mu)^{-1}(h\delta_\Sigma) , (-\Delta-\lambda)^{-1}(h\delta_\Sigma)}_{L^2(\Sigma)}.
\end{align*}
Hence
\begin{align*}
\begin{split}
\lim_{\mu\to\lambda}\frac{ \sk{ \overline{B_\lambda} h,h}_{L^2(\Sigma)} - \sk{ \overline{B_\mu} h,h}_{L^2(\Sigma)} }{\lambda-\mu}
&=
\|(-\Delta-\lambda)^{-1}(h\delta_\Sigma)\|_{L^2(\Sigma)}^2\\
&=
\|\gamma_\lambda h\|_{L^2(\Sigma)}^2 .
\end{split}
\end{align*}
Since $\gamma_\lambda$ is an injective operator it follows that the function $\lambda\mapsto\sk{ \overline{B_\lambda} h,h}_{L^2(\Sigma)}$ is strictly increasing on $(-\infty,0)$, as its derivative is positive, i.e.,
\begin{align*}
 \big\langle \overline{B_\lambda} h, h \big\rangle_{L^2 (\Sigma)} < \big\langle \overline{B_\mu} h, h \big\rangle_{L^2 (\Sigma)}
\end{align*}
whenever $\lambda < \mu < 0$. From this and the min-max-principle for $\lambda < \mu < 0$ we obtain
\begin{align*}
 -\nu_k(\lambda) & = \min_{   \genfrac{}{}{0pt}{1}{  U\subseteq\dom\overline{B_\lambda}  }{\dim U=k}  }
\max_{   \genfrac{}{}{0pt}{1}{  h\in U  }{ \|h\|=1}  }   \sk{ - \overline{B_\lambda} h,h}_{L^2(\Sigma)}  \\
 & > \min_{   \genfrac{}{}{0pt}{1}{  U\subseteq\dom \overline{B_\mu} }{\dim U=k}  } \max_{   \genfrac{}{}{0pt}{1}{  h\in U  }{ \|h\|=1}  }   \sk{ - \overline{B_\mu} h,h}_{L^2(\Sigma)} = -\nu_k(\mu),
\end{align*}
where we have used that the operators $- \overline{B_\lambda}$ and $- \overline{B_\mu}$ are bounded from below; cf.~(ii). Thus $\nu_k (\lambda) < \nu_k (\mu)$ for $\lambda < \mu < 0$ and by continuity the same holds in the case $\lambda < \mu = 0$. This proves the remaining assertion in~(iv).
\end{proof}

\subsection{Well-definedness of the generalized trace}\label{subsec:Well-definedness}

In this subsection we verify that the definition of the generalized trace $u\vert_\Sigma$ in~\eqref{traceudef} is independent of the choice of $\lambda<0$. Observe first that if
\begin{equation}\label{decou1}
   u=u_c+\gamma_\lambda h,\qquad u_c\in
H^2(\R^3),\,\,\,h\in\dom\overline{B_\lambda},
\end{equation}
for some $\lambda<0$ then $h\in\dom \overline{B_\mu}$ for any $\mu<0$ by
Proposition~\ref{Proposition_Main}~(i) and
\begin{equation}\label{decou2}
u = v_c+\gamma_\mu h,
\qquad\text{where}\,\,\,v_c:= u_c+\gamma_\lambda h-\gamma_\mu h.
\end{equation}
It follows as in~\eqref{eq:uc} that $\gamma_\lambda h - \gamma_\mu h$ belongs to $H^2(\R^3)$, and hence also $v_c\in H^2(\R^3)$. Thus if $u$ admits the decomposition \eqref{decou1} with respect
to some $\lambda<0$ then $u$ admits the decomposition \eqref{decou2}
with respect to any $\mu<0$. Note also
that for fixed $\lambda<0$ both elements $u_c$ and $h$ in the decomposition~\eqref{decou1} are unique.

Let now $\lambda,\mu<0$ and assume that
\begin{equation}\label{uudeco}
u=u_c+\gamma_\lambda h=v_c+\gamma_\mu k
\end{equation}
with $u_c,v_c\in H^2(\R^3)$ and $h,k\in\dom 
\overline{B_\lambda}=\dom\overline{B_\mu}$.
Then it follows from the above considerations and the uniqueness
of the decompositions in \eqref{uudeco} that
\begin{equation}\label{voila}
  v_c=u_c+\gamma_\lambda h-\gamma_\mu h\quad\text{and}\quad h=k.
\end{equation}
Using~\eqref{eq:integral3} it follows from \eqref{voila} that
\begin{equation*}
\begin{split}
v_c\vert_\Sigma + \overline{B_\mu} k
& =
\bigl( u_c + \gamma_\lambda h-\gamma_\mu h\bigr)|_\Sigma + \overline{B_\mu} h\\
&=
u_c|_\Sigma + (\overline{B_\lambda} h-\overline{B_\mu} h) + \overline{B_\mu} h\\
&=
u_c|_\Sigma + \overline{B_\lambda} h .
\end{split}
\end{equation*}
This shows that the definition of the generalized trace in~\eqref{traceudef} is independent of the choice of~$\lambda$.

\section{Proofs of the main results}\label{sec4}

In this section we provide the complete proofs of the results in section~\ref{secMainRes}.

\subsection{Proof of Theorem~\ref{thm_KreinscheFormel}}

We start by proving assertion (i). Assume first that $\lambda\in\sigma_{\rm p}(\Op)$ for some 
$\lambda<0$, let 
$u\in \ker(\Op -\lambda)$, $u\not=0$,  and write
$u=u_c+\gamma_\lambda h$ with $u_c\in H^2(\R^3)$ and
$h\in\dom \overline{B_\lambda}$. Using the definition of
$\gamma_\lambda$ in \eqref{gammaop} we obtain
\begin{equation*}
\begin{split}
0 &= (\Op -\lambda) (u_c+\gamma_\lambda h) \\
  &= (-\Delta-\lambda) (u_c+\gamma_\lambda h) - \frac{1}{\alpha}  u|_\Sigma \cdot \delta_\Sigma \\
  &= (-\Delta-\lambda)u_c + \frac{1}{\alpha}(\alpha h -  u|_\Sigma) \delta_\Sigma.
\end{split}
\end{equation*}
Since $(- \Delta - \lambda) u_c \in L^2(\R^3)$ it follows $u_c = 0$. In particular, $0 \neq u = \gamma_\lambda h$, which implies $h \neq 0$. Moreover,
\begin{equation*}
  \alpha h = u|_\Sigma = (\gamma_\lambda h) |_\Sigma = \overline{B_\lambda} h,
\end{equation*}
that is, $h\in \ker(\alpha - \overline{B_\lambda})$. Since $u = \gamma_\lambda h$ it follows 
$$\ker(\Op-\lambda) 
\subseteq \gamma_\lambda \bigl( \ker(\alpha - \overline{B_\lambda}) \bigr).$$
Conversely, if 
$h\in\ker(\alpha - \overline{B_\lambda})$, $h\not=0$,
for some $\lambda<0$ set $u = \gamma_\lambda h$. Since $\gamma_\lambda$ is injective we obtain $u\not=0$ and
\begin{align*}
 u\vert_\Sigma = (\gamma_\lambda h) \vert_\Sigma = \overline{B_\lambda}h = \alpha h,
\end{align*}
and hence
\begin{equation*}
 (\cA_\alpha -\lambda) u= (-\Delta-\lambda) \gamma_\lambda h - \frac{1}{\alpha} u|_\Sigma \cdot \delta_\Sigma = h \delta_\Sigma - h \delta_\Sigma = 0.
\end{equation*}
From this we conclude $(\Op -\lambda) u=0$. Thus 
$$\gamma_\lambda \bigl( \ker(\alpha - \overline{B_\lambda}) \bigr) \subseteq  \ker(\Op-\lambda)$$ 
and 
$\lambda\in\sigma_{\rm p} (\Op)$. Since $\gamma_\lambda$ is continuous as a mapping from $L^2(\Sigma)$ into $L^2(\R^3)$
it follows that $\gamma_\lambda$ is an isomorphism between the spaces $\ker(\alpha - \overline{B_\lambda})$ and $\ker(\Op-\lambda)$.

Next we verify the resolvent formula \eqref{KreinscheFormel} in (ii) and, simultaneously, the selfadjointness of $\Op$. In the following for a given 
$\alpha \neq 0$ fix $\lambda_0 < 0$ such that $\alpha\not\in\sigma_{\rm p}( \overline{B_{\lambda_0}} )$; 
this is possible according to Proposition~\ref{Proposition_Main}~(iv). By item (i) we have 
\begin{equation*}
   \ker(\Op -\lambda_0)=\{0\}.
\end{equation*}
Let now $v \in L^2(\R^3)$ be arbitrary and define
\begin{equation}\label{udef}
  u = (\Opfree - \lambda_0)^{-1} v + \gamma_{\lambda_0} 
\bigl(\alpha - \overline{B_{\lambda_0}}\,\bigr)^{-1} 
\gamma_{\lambda_0}^*v \in L^2 (\R^3),
\end{equation}
and note that $(\alpha - \overline{B_{\lambda_0}})^{-1}$ 
is a bounded,
selfadjoint operator in $L^2(\Sigma)$; cf.\ Proposition~\ref{Proposition_Main}~(i) and~(ii).
Furthermore, as $(\Opfree - \lambda_0)^{-1}v \in H^2(\R^3)$ and
$(\alpha - \overline{B_{\lambda_0}})^{-1} \gamma_{\lambda_0}^*v 
\in \dom\overline{B_{\lambda_0}}$,
the trace $u \vert_\Sigma$ is well-defined in the sense 
of~\eqref{traceudef}. Making use of \eqref{goodtohave} we compute
\begin{equation}\label{soso}
  \begin{split}
  u |_\Sigma & = \big((\Opfree - \lambda_0)^{-1} v \big)|_\Sigma
  + \overline{B_{\lambda_0}} \bigl(\alpha - \overline{B_{\lambda_0}} \bigr)^{-1} 
\gamma_{\lambda_0}^*v  \\
  & = \left( I + \overline{B_{\lambda_0}} \bigl(\alpha - \overline{B_{\lambda_0}} \bigr)^{-1} \right)
\gamma_{\lambda_0}^*v \\
&= \alpha \left(\alpha  - \overline{B_{\lambda_0}} \right)^{-1} \gamma_{\lambda_0}^*v.
\end{split}
\end{equation}
From~\eqref{gammaop},~\eqref{soso} and the definition of $u$ 
in~\eqref{udef} we then conclude
\begin{equation*}
\begin{split}
  (\cA_\alpha-\lambda_0) u&=(-\Delta-\lambda_0)u-\frac{1}{\alpha} u|_\Sigma \cdot 
\delta_\Sigma\\
  & = v + \Big( \bigl( \alpha - \overline{B_{\lambda_0}}\,\bigr)^{-1} \gamma_{\lambda_0}^*v 
\Big)\cdot\delta_\Sigma - \frac{1}{\alpha}  u|_\Sigma\cdot\delta_\Sigma \\
  &= v
\end{split}
\end{equation*}
and hence $\cA_\alpha u=v+\lambda_0 u\in L^2(\R^3)$.
Thus we have $u\in\dom(\Op)$ and
\begin{align*}
(\Op-\lambda_0)^{-1} v = u =
(\Opfree - \lambda_0)^{-1}v + 
\gamma_{\lambda_0} \bigl(\alpha - \overline{B_{\lambda_0}}\,\bigr)^{-1}  \gamma_{\lambda_0}^*v.
\end{align*}
Since $v \in L^2 (\R^3)$ was arbitrary the 
identity~\eqref{KreinscheFormel} follows for $\lambda_0$.
In particular, since $(\alpha - \overline{B_{\lambda_0}})^{-1}$ is a 
bounded, selfadjoint operator in $L^2 (\Sigma)$, it follows that $(\Op - \lambda_0)^{-1}$
is bounded and selfadjoint in $L^2 (\R^3)$. This implies that $\lambda_0\in\rho(\Op)$ and that
$\Op$ is a selfadjoint operator in $L^2(\R^3)$.

Assume now that $\lambda\in\rho(\Op)\cap(-\infty,0)$ is arbitrary. Then $\alpha\in\rho(\overline{B_\lambda})$ by item (i) and Proposition~\ref{Proposition_Main}~(ii)
and the above arguments with $\lambda_0$ replaced by $\lambda$ yield the resolvent formula \eqref{KreinscheFormel} for all $\lambda\in\rho(\Op)\cap(-\infty,0)$.
The identity~\eqref{KreinscheFormel} also implies 
\begin{align*}
 \big\| (\Op-\lambda)^{-1} - (\Opfree - \lambda)^{-1} \big\| & = \big\| \gamma_{\lambda} \bigl(\alpha - \overline{B_{\lambda}}\,\bigr)^{-1}  \gamma_{\lambda}^* \big\| \\
 & \leq \| \gamma_{\lambda}\|^2  \big\| \bigl(\alpha - \overline{B_{\lambda}}\,\bigr)^{-1} \big\| \\
 & \leq \frac{\| \gamma_{\lambda}\|^2}{\alpha - \nu_1 (\lambda)}
\end{align*}
for all $\alpha > \nu_1(\lambda)$; cf.~Proposition~\ref{Proposition_Main}~(ii). It follows that 
the right-hand side converges to 0 as $\alpha\to+\infty$. This proves assertion~(ii).

In order to prove assertion~(iii) let first $\lambda = \lambda_0 \in \rho (\Op) \cap (- \infty, 0)$ be fixed. Then
\begin{align}\label{eq:ResFact}
(\Op - \lambda_0)^{-1} - (\Opfree - \lambda_0)^{-1} 
= \gamma_{\lambda_0} (\alpha - \overline{B_{\lambda_0}})^{-1} \gamma_{\lambda_0}^*.
\end{align}
Note that the identity~\eqref{goodtohave} implies that $\gamma_{\lambda_0}^*$ can also be regarded as a bounded operator from $L^2 (\R^3)$ to $H^1 (\Sigma)$ since the restriction map $H^2 (\R^3) \ni \p \mapsto \p |_\Sigma \in H^1 (\Sigma)$ is continuous (cf., e.g.,~\cite[Theorem 24.3]{BIN79}). In particular, it follows from the compactness of the embedding of $H^1 (\Sigma)$ into $L^2 (\Sigma)$ that
$\gamma_{\lambda_0}^*$ is compact. Since $(\alpha - \overline{B_{\lambda_0}})^{-1}$ is a bounded operator in $L^2 (\Sigma)$, the identity~\eqref{eq:ResFact} implies that the resolvent difference in~\eqref{eq:ResDiff} is compact for $\lambda = \lambda_0$. For an arbitrary $\lambda \in \rho (\Op) \cap \rho (\Opfree)$ a simple calculation yields
\begin{align*}
\begin{split}
(\Op - \lambda)^{-1}  &- (\Opfree - \lambda)^{-1}\\
&\qquad = U \big( (\Op - \lambda_0)^{-1} - (\Opfree - \lambda_0)^{-1} \big) V,
\end{split}
\end{align*}
where
$$
U=1 + (\lambda - \lambda_0) (\Opfree - \lambda)^{-1} \quad\text{and}\quad V=1 + (\lambda - \lambda_0) (\Op - \lambda)^{-1} 
$$
are bounded operators in $L^2 (\R^3)$. Now the claim follows from the assertion for $\lambda_0$. This proves~(iii).

\subsection{Proof of Theorem~\ref{thm:ResDiff}}

It suffices to prove the assertion of Theorem~\ref{thm:ResDiff} only for a fixed $$\lambda = \lambda_0 \in \rho (\Op) \cap (- \infty, 0).$$ 
Once it is established for $\lambda_0$ it follows for all $\lambda \in \rho (\Op) \cap \rho (\Opfree)$ with an argument 
as in the proof of Theorem~\ref{thm_KreinscheFormel}~(iii) and standard properties of singular values; cf. \cite[II.\S 2.2]{GK69}. When we denote by $- \Delta_{\rm 
LB}^\Sigma$ the Laplace--Beltrami operator in $L^2 (\Sigma)$ and write
$\Lambda := (I - \Delta_{\rm LB}^\Sigma)^{1/2}$ then $\Lambda$ is an 
isometric isomorphism between $H^1 (\Sigma)$ and $L^2 (\Sigma)$.
Moreover, $\Lambda^{-1}$ is a compact, selfadjoint operator in $L^2 
(\Sigma)$, whose singular values satisfy $s_k (\Lambda^{-1}) = O (1/k)$
as $k \to + \infty$; cf.~\cite[(5.39) and the text below]{A90}. 
Since $\gamma_{\lambda_0}^*$ is bounded from $L^2 (\R^3)$ to $H^1 (\Sigma)$ (see the proof of Theorem~\ref{KreinscheFormel}~(iii)) 
it follows that $\Lambda \gamma_{\lambda_0}^* : L^2 (\R^3) \to L^2 (\Sigma)$ is a 
bounded operator and from
\begin{align*}
\gamma_{\lambda_0}^* = \Lambda^{-1} \Lambda \gamma_{\lambda_0}^*
\end{align*}
we conclude $s_k (\gamma_{\lambda_0}^*) = O (1/k)$
as $k \to + \infty$; cf. \cite[II.\S 2.2]{GK69}.
As a consequence, also $\gamma_{\lambda_0} : L^2(\Sigma) \to L^2(\R^3)$ 
is a compact operator with $s_k (\gamma_{\lambda_0}) = O(1/k)$
as $k \to + \infty$. Moreover, with the help of~Corollary~2.2 in 
\cite[Chapter II]{GK69} we obtain
\begin{align}\label{eq:jminus2}
\begin{split}
 s_{3j-2} \li \gamma_{\lambda_0} (\alpha-\overline{B_\lambda} )^{-1} \gamma_{\lambda_0}^*\re & \leq s_{2j-1}\li \gamma_{\lambda_0} (\alpha-\overline{B_\lambda})^{-1} \re s_j( \gamma_{\lambda_0}^* ) \\
 & \leq s_j( \gamma_{\lambda_0} ) s_j\li (\alpha-\overline{B_\lambda} )^{-1} \re  s_j( \gamma_{\lambda_0}^* )
\end{split}
\end{align}
for all $j \in \N$. Due to these observations and
Proposition~\ref{Proposition_Main}~(iii) there exists $C = C (\lambda_0) > 0$ such that
\begin{align*}
s_j( \gamma_{\lambda_0} ) \leq \frac{C}{j} , \quad
s_j\li (\alpha - \overline{B_\lambda} )^{-1} \re  \leq 
\frac{C}{\ln j}, \quad \text{and} \quad
s_j( \gamma_{\lambda_0}^* ) \leq \frac{C}{j}
\end{align*}
hold for all $j\in\N$. From this the claim of the theorem follows for 
$\lambda = \lambda_0$. Indeed, for $j \geq 2$ with the help of~\eqref{eq:jminus2} we get
\begin{align*}
 s_{3j-2}\li \gamma_{\lambda_0} (\alpha-\overline{B_\lambda} )^{-1} \gamma_{\lambda_0}^* \re \leq \frac{C^3}{j^2\ln j} \leq \frac{27C^3}{(3j)^2\ln(3j)}
\end{align*}
since $\ln j=\frac{1}{3}\ln(j^3)\geq\frac{1}{3}\ln(3j)$. As
\begin{align*}
\begin{split}
s_{3j}\li \gamma_{\lambda_0} (\alpha - \overline{B_\lambda} )^{-1} \gamma_{\lambda_0}^*\re
&\leq s_{3j-1}\li \gamma_{\lambda_0} (\alpha - \overline{B_\lambda} )^{-1} \gamma_{\lambda_0}^*\re \\
&\leq s_{3j-2}\li \gamma_{\lambda_0} (\alpha - \overline{B_\lambda} )^{-1} \gamma_{\lambda_0}^* \re
\end{split}
\end{align*}
and
\begin{align*}
\frac{27C^3}{(3j)^2\ln(3j)}
\leq
\frac{27C^3}{(3j-1)^2\ln(3j-1)}
\leq
\frac{27C^3}{(3j-2)^2\ln(3j-2)}
\end{align*}
we observe 
\begin{align*}
s_k\li \gamma_{\lambda_0} (\alpha - \overline{B_\lambda} )^{-1} \gamma_{\lambda_0}^* \re
\leq \frac{27C^3}{k^2\ln k}
\end{align*}
for all $k\in\N$, $k\geq4$. This yields the assertion of the theorem.

\subsection{Proof of Theorem~\ref{thm:numberEV} and Corollary~\ref{cor:numberEVestimate}}

Let us first prove Theorem~\ref{thm:numberEV}.
For $\lambda \leq 0$ let us denote by $\nu_j (\lambda)$ the eigenvalues of the operator $\overline{B_\lambda}$, 
ordered nonincreasingly and counted with multiplicities; cf. Proposition~\ref{Proposition_Main} (iii). We remark that by Theorem~\ref{KreinscheFormel}~(i) 
and Proposition~\ref{Proposition_Main}~(iv)
the number $N_\alpha$ of negative eigenvalues of $\Op$ counted with multiplicities coincides with the number of 
eigenvalues of $\overline B_0$ larger than $\alpha$, counted with multiplicities.
Moreover, 
let $\T$ be a circle of radius $R = \frac{L}{2 \pi}$, where $L$ is the length of $\Sigma$. We denote by $B_\lambda^\T$ the analog of $B_\lambda$ where $\Sigma$ is replaced by the circle $\T$, and by $\nu_j^\T (\lambda)$ the eigenvalues 
of its closure. From \eqref{eq:perturbation} with $\lambda = 0$ it follows with the min-max principle that
\begin{align*}
 \nu_j^\T (0) - \Vert D_0\Vert \leq \nu_j (0) \leq \nu_j^\T (0) + \Vert D_0\Vert , \quad j = 1, 2, \dots.
\end{align*}
Taking into account \eqref{eq:D0est} we obtain
\begin{align}\label{eq:EVrelation}
 \nu_j^\T (0) - d_\Sigma \leq \nu_j (0) \leq \nu_j^\T (0) + d_\Sigma, \quad j = 1, 2, \dots.
\end{align}

Assume first that $\alpha - d_\Sigma \geq \frac{\ln (4 R)}{2 \pi}$. For $\lambda<0$ and $j = 1, 2, \dots$ we obtain from Proposition~\ref{Proposition_Main}~(iv),~\eqref{eq:EVrelation}, and Lemma~\ref{Lemma_T_0}~(ii)
\begin{align*}
\nu_j (\lambda) < \nu_j (0) \leq \nu_1 (0) \leq \nu_1^\T (0) + d_\Sigma = \frac{\ln (4 R)}{2 \pi} + d_\Sigma \leq \alpha.
\end{align*}
In particular, $\alpha \notin \sigma_{\rm p} (\overline{B_\lambda})$
for all $\lambda < 0$. From this and Theorem~\ref{thm_KreinscheFormel}~(i) it follows
$\lambda \notin \sigma_{\rm p} (\Op)$ for all $\lambda < 0$, hence $N_\alpha = 0$.

Assume now $\alpha + d_\Sigma \in I_r$ for some $r \geq 0$ and $\alpha - d_\Sigma \in I_l$ for some $l \geq 0$. 
By means of Lemma~\ref{Lemma_T_0}~(ii) this implies
\begin{align}\label{eq:ineqAlpha}
 \nu_{2 r + 2}^\T (0) \leq \alpha + d_\Sigma < \nu_{2r+1}^\T (0)
\end{align}
and
\begin{align}\label{eq:ineqAlpha2}
 \nu_{2 l + 2}^\T (0) \leq \alpha - d_\Sigma < \nu_{2 l + 1}^\T (0).
\end{align}
From~\eqref{eq:ineqAlpha},~\eqref{eq:ineqAlpha2} and~\eqref{eq:EVrelation} it follows
\begin{align}\label{eq:ineqResult}
 \nu_{2 l + 2} (0) \leq \nu_{2 l + 2}^\T (0) + d_\Sigma \leq \alpha < \nu_{2 r + 1}^\T (0) - d_\Sigma \leq \nu_{2 r + 1} (0).
\end{align}
Due to Proposition~\ref{Proposition_Main}~(iv) the functions $\lambda \mapsto \nu_j (\lambda)$ are continuous 
and strictly increasing and satisfy $\nu_j (\lambda) \to - \infty$ as $\lambda \to - \infty$, $j = 1, 2, \dots$. 
Thus by~\eqref{eq:ineqResult} for each $j \leq 2 r + 1$ there exists precisely one $\lambda_j < 0$ such that
$\nu_j (\lambda_j) = \alpha$. 
From Theorem~\ref{thm_KreinscheFormel}~(i) we conclude that each such $\lambda_j$ is an eigenvalue of $\Op$ and hence 
we obtain the estimate
\begin{align*}
 2 r + 1 \leq N_\alpha.
\end{align*}
In the same way \eqref{eq:ineqResult} implies that 
for any $j \geq 2 l + 2$ there exists no $\lambda < 0$ such that $\nu_j (\lambda) = \alpha$ and
that for each $j \in \{ k : 2 r + 2 \leq k \leq 2 l + 1\}$ there exists at most one $\lambda_j < 0$ such that $\nu_j (\lambda_j) = \alpha$.
Theorem~\ref{thm_KreinscheFormel}~(i) yields that 
each such $\lambda_j$ is an eigenvalue of $\Op$ and therefore
\begin{align*}
 N_\alpha\leq 2l+1.
\end{align*}

In the remaining case $\alpha + d_\Sigma \in I_r$ with $r = - 1$ it is clear that
\begin{align*}
 2 r + 1 = - 1 \leq N_\alpha,
\end{align*}
and the upper estimate for $N_\alpha$ follows as above. This completes the proof of the theorem.

Let us now turn to the proof of the corollary. As in Theorem~\ref{thm:numberEV} let $r$ and $l$ such that $\alpha+d_\Sigma\in I_r$ and $\alpha-d_\Sigma\in I_l$. The condition $\alpha + d_\Sigma < \frac{\ln(4R)}{2\pi}-\frac{1}{\pi}$ ensures $1\leq r \leq l$. The proof is based on the estimates
\begin{align}\label{eq:Knuth}
 \ln k + \gamma + \frac{1}{2k} - \frac{1}{12k^2} < H_k < \ln k + \gamma + \frac{1}{2k} - \frac{1}{12k^2} + \frac{1}{120k^4}
\end{align}
for the harmonic sum $H_k = \sum_{j=1}^k \frac{1}{j}$, $k\geq1$, see e.g.~\cite[(9.89)]{GKP89}. Since $\sum_{j=1}^k \frac{1}{2j-1} = H_{2k} - \frac{1}{2} H_k$ it follows from~\eqref{eq:Knuth}
\begin{align*}
 \sum_{j=1}^k \frac{1}{2j-1} & > \ln(2k) + \gamma + \frac{1}{4k} - \frac{1}{48k^2} - \frac{1}{2} \Big(\ln k + \gamma + \frac{1}{2k} - \frac{1}{12k^2} + \frac{1}{120k^4}  \Big)  \\
 & = \frac{\ln k + \ln4 + \gamma}{2}  + \frac{1}{48k^2} - \frac{1}{240k^4} \\
 & >  \frac{\ln k + \ln4 + \gamma}{2}.
\end{align*}
Hence $\alpha-d_\Sigma\in I_l$ implies
\begin{align*}
\alpha-d_\Sigma
&< \frac{\ln(4R)}{2\pi} - \frac{1}{\pi} \sum_{j=1}^l \frac{1}{2j-1}  
< \frac{\ln(4R)}{2\pi} - \frac{  \ln l + \ln4 + \gamma  }{2\pi}
\end{align*}
and therefore
\begin{align}\label{est:for_upper}
\ln l  <  -2\pi(\alpha-d_\Sigma) + \ln R - \gamma  .
\end{align}
Using $N_\alpha\leq 2l+1$ from Theorem~\ref{thm:numberEV} and the estimate \eqref{est:for_upper} we get
\begin{align*}
N_\alpha - 2R e^{-2\pi(\alpha-d_\Sigma)-\gamma}
\leq
2l+1 - 2 e^{-2\pi(\alpha-d_\Sigma) + \ln R-\gamma}  
<
2l+1 - 2 e^{\ln l} = 1
\end{align*}
which yields the upper estimate for $N_\alpha$ in \eqref{eq:numberEVestimate}.

For the lower estimate in \eqref{eq:numberEVestimate} we deduce
from \eqref{eq:Knuth} the estimate
\begin{align*}
\begin{split}
&\sum_{j=1}^k \frac{1}{2j-1}\\
&\quad <
\ln(2k) + \gamma + \frac{1}{4k} - \frac{1}{48k^2} + \frac{1}{1920k^4} -
\frac{1}{2}\left(  \ln k + \gamma + \frac{1}{2k} - \frac{1}{12k^2}  \right)  \\
&\quad =
\frac{\ln k + \ln4 + \gamma}{2} +
\frac{1}{48k^2} + \frac{1}{1920k^4}\\
&\quad <
\frac{\ln k + \ln4 + \gamma + \frac{1}{23k^2}}{2} .
\end{split}
\end{align*}
Hence $\alpha+d_\Sigma\in I_r$ implies
\begin{align*}
\begin{split}
\alpha+d_\Sigma &
\geq \frac{\ln(4R)}{2\pi} - \frac{1}{\pi} \sum_{j=1}^{r+1} \frac{1}{2j-1}\\  
&> \frac{\ln(4R)}{2\pi} - \frac{  \ln(r+1) + \ln4 + \gamma + \frac{1}{ 23(r+1)^2 }  }{2\pi}
\end{split}
\end{align*}
and therefore
\begin{align}\label{est:for_lower}
\ln(r+1) + \frac{1}{ 23(r+1)^2 }  > -2\pi(\alpha+d_\Sigma) + \ln R - \gamma.
\end{align}
Using $N_\alpha\geq 2r+1$ from Theorem~\ref{thm:numberEV} and the estimate \eqref{est:for_lower} we get
\begin{align*}
N_\alpha - 2R e^{-2\pi(\alpha+d_\Sigma)-\gamma}
&\geq
2r+1 - 2 e^{-2\pi(\alpha+d_\Sigma)+\ln R-\gamma}  \\
&>
2r+1 - 2 e^{\ln(r+1) + \frac{1}{ 23(r+1)^2 } }  \\
&=
2(r+1)-1 - 2(r+1)e^{ \frac{1}{ 23(r+1)^2 } }  \\
&=
2(r+1) \Big(  1 - e^{ \frac{1}{ 23(r+1)^2 } } \Big) -1  =: g(r).
\end{align*}
As $g'(r)>0$ for all $r\geq 1$, the minimum of $g$ for $r \geq 1$ is attained at $r = 1$. Hence
\begin{align*}
N_\alpha - 2R e^{-2\pi(\alpha+d_\Sigma)-\gamma}
>
4 \Big(  1 - e^{ \frac{1}{92} }  \Big) -1 ,
\end{align*}
which gives the lower estimate in \eqref{eq:numberEVestimate}.

\subsection{Proof of Theorem~\ref{thm:EVmaximizer}}

The proof of Theorem~\ref{thm:EVmaximizer} follows the ideas of~\cite{E05,EHL06}. Suppose that $\Sigma$ is not a circle. 
Then the strict
inequality
\begin{align}\label{ineqExnerEtAl}
  \int_0^L | \sigma(s+u)-\sigma(s)| \;ds <  \frac{L^2}{\pi} \sin 
\frac{\pi u}{L}, \quad u \in (0,L),
\end{align}
holds,
where $\sigma$ is identified with its $L$-periodic extension to all of 
$\R$. For $u \in (0, \frac{L}{2} ]$ the inequality \eqref{ineqExnerEtAl} 
follows from~\cite[Theorem~2.2 and Proposition~2.1]{EHL06}. As every $u 
\in ( \tfrac{L}{2} , L )$ can be written as $u=L-v$ with $v\in(0, 
\tfrac{L}{2} )$, the substitution $t=s-v$ and the periodicity of 
$\sigma$ yield for $u \in ( \tfrac{L}{2} , L )$
\begin{equation*}
 \begin{split}
 \int_0^L | \sigma(s+u)-\sigma(s)| \;ds
& =
 \int_0^L | \sigma(s-v) - \sigma(s) | \;ds\\
&=
\int_0^L | \sigma(t) - \sigma(t+v)  | \;dt\\
&<
\frac{L^2}{\pi} \sin \frac{\pi v}{L}\\
&=
\frac{L^2}{\pi} \sin \frac{\pi u}{L} , 
 \end{split}
\end{equation*}
i.e.,the estimate \eqref{ineqExnerEtAl} holds for all $u\in(0,L)$.

In the following denote by  $\lambda_1 = \min \sigma ( -\Delta_{\T,\alpha} ) < 0$
the smallest eigenvalue of $- \Delta_{\T,\alpha}$
(cf.~Corollary~\ref{cor:circle}) and let $\nu_1^\T(\lambda_1)$ be
the largest eigenvalue of $\overline{ B_{\lambda_1}^\T }$.
By 
Theorem~\ref{thm_KreinscheFormel}~(i) we have $\alpha \in 
\sigma_{\rm p} (\overline{B_{\lambda_1}^\T})$ and, in particular, 
$\alpha \leq \nu_1^\T (\lambda_1)$.

We claim that 
\begin{equation}\label{nuclaim}
\nu_1^\T (\lambda_1) < \nu_1 (\lambda_1)
\end{equation}
holds. In order to see this note first that
\eqref{eq:perturbation} implies
\begin{align}\label{eq:bla}
  \overline{B_{\lambda_1}} = D_{\lambda_1} + J^* 
\overline{B_{\lambda_1}^\T} J ,
\end{align}
where $J : L^2 (\Sigma) \to L^2 (\T)$ is the unitary mapping given in 
\eqref{eq:J} and the compact operator $D_{\lambda_1}$ in $L^2 (\Sigma)$ 
is given by
\begin{align*}
  \big( D_{\lambda_1} h \big)( \sigma(t)) = \int_0^L h(\sigma(s)) \left[ 
\frac{e^{-\sqrt{-\lambda_1}|\sigma(t)-\sigma(s)|}}{4\pi|\sigma(t)-\sigma(s)|} 
- 
\frac{e^{-\sqrt{-\lambda_1}|\tau(t)-\tau(s)|}}{4\pi|\tau(t)-\tau(s)|}\right] 
d s
\end{align*}
for $h \in L^2 (\Sigma)$. It follows from 
Lemma~\ref{Lemma_T_lambda}~(iv) and~\eqref{eq:bla} that for the constant 
function $h = \frac{1}{\sqrt{L}}$ on $\Sigma$ (which implies $\|h\|_{L^2(\Sigma)}=1$)
we have
\begin{equation}\label{eq:bisschenwas}
 \begin{split}
 \sk{ \overline{B_{\lambda_1}} h,h}_{L^2(\Sigma)}
&=
\sk{D_{\lambda_1} h,h}_{L^2(\Sigma)} + \sk{ \overline{B_{\lambda_1}^\T} 
Jh,Jh}_{L^2(\mathcal{T})}\\
&=
\sk{D_{\lambda_1} h,h}_{L^2(\Sigma)} + \nu_1^\T (\lambda_1). 
 \end{split}
\end{equation}

Our aim is to estimate the term $\sk{D_{\lambda_1} h,h}_{L^2(\Sigma)}$. 
For this purpose we define the function
\begin{align*}
  G (x) = \frac{e^{-\sqrt{-\lambda_1}x}}{4 \pi x}, \qquad x>0.
\end{align*}
It is easy to see that $G$ is strictly monotone decreasing and 
convex. Hence~\eqref{ineqExnerEtAl} and the monotonicity of $G$ imply
\begin{align}\label{eq:decreasing}
  G \li \frac{L}{\pi} \sin \frac{\pi u}{L} \re < G \li \frac{1}{L} 
\int_0^L | \sigma(s+u)-\sigma(s)| ds \re
\end{align}
for each $u \in (0,L)$. Using Jensen's Inequality, see e.g.\ 
\cite[Theorem 3.3]{R70}, the convexity of $G$ implies
\begin{align}\label{eq:Jensen}
  G \li  \frac{1}{L} \int_0^L | \sigma(s+u)-\sigma(s)| ds \re \leq 
\frac{1}{L} \int_0^L G (| \sigma(s+u)-\sigma(s)|) ds .
\end{align}
Combining \eqref{eq:decreasing} and \eqref{eq:Jensen} we observe
\begin{align}\label{Ungleichung1}
\begin{split}
  0 &< \int_0^L  \li \int_0^L G (| \sigma(s+u)-\sigma(s)|) \;ds - L G 
\li \frac{L}{\pi} \sin \frac{\pi u}{L} \re  \re du  \\
  & = \int_0^L  \int_0^L G (| \sigma(s+u)-\sigma(s)|)  - G \li 
\frac{L}{\pi} \sin \frac{\pi u}{L} \re   du ds.
\end{split}
\end{align}
Moreover for each $s \in (0, L)$ with the substitution $t = s+u$ we get
\begin{align*}
  \int_0^L G &(| \sigma(s+u)-\sigma(s)|)  - G \li \frac{L}{\pi} \sin 
\frac{\pi u}{L} \re   du  \\
  & = \int_s^{L+s} G (| \sigma(t)-\sigma(s)|)  - G \li \frac{L}{\pi} 
\sin \frac{\pi(t-s)}{L} \re   dt \\
  & = \int_s^L G (| \sigma(t)-\sigma(s)|)  - G \li \frac{L}{\pi} \sin 
\frac{\pi(t-s)}{L} \re   dt \\
  &\quad + \int_0^s G (| \sigma(t+L)-\sigma(s)|)  - G \li \frac{L}{\pi} 
\sin \frac{\pi(t+L-s)}{L} \re   dt \\
  & = \int_s^L G (| \sigma(t)-\sigma(s)|)  - G \li \frac{L}{\pi} \sin 
\frac{\pi(t-s)}{L} \re   dt \\
  &\quad + \int_0^s G (| \sigma(t)-\sigma(s)|)  - G \li \frac{L}{\pi} 
\sin \frac{\pi(s-t)}{L} \re   dt \\
  & = \int_0^L G (| \sigma(t)-\sigma(s)|)  - G \li \frac{L}{\pi} \sin 
\frac{\pi|t-s|}{L} \re   dt.
\end{align*}
Therefore \eqref{Ungleichung1} can be rewritten as
\begin{align*}
  0 & < \int_0^L  \int_0^L G (| \sigma(t)-\sigma(s)|)  - G \li 
\frac{L}{\pi} \sin \frac{\pi|t-s|}{L} \re   dt ds.
\end{align*}
 From the last equality and~\eqref{eq:chord} (with $\sigma$ replaced by $\tau$) we conclude
\begin{align*}
\begin{split}
\sk{D_{\lambda_1} h,h}_{L^2(\Sigma)}
&= \frac{1}{L} \int_{0}^{L}     \int_{0}^{L}
\frac{e^{-\sqrt{-\lambda_1}|\sigma(t)-\sigma(s)|}}{4\pi|\sigma(t)-\sigma(s)|} 
- \frac{e^{-\sqrt{-\lambda_1}|\tau(t)-\tau(s)|}}{4\pi|\tau(t)-\tau(s)|} 
\,ds   \,dt \\
&> 0
\end{split}
\end{align*}
for the constant function $h = \frac{1}{\sqrt{L}}$. Hence~\eqref{eq:bisschenwas} leads to
\begin{align*}
  \sk{ \overline{B_{\lambda_1}} h,h}_{L^2(\Sigma)} > \nu_1^\T (\lambda_1)
\end{align*}
for the constant function $h = \frac{1}{\sqrt{L}}$ and hence \eqref{nuclaim} follows. In particular,
\begin{align*}
\alpha \leq \nu_1^\T (\lambda_1) < \nu_1 (\lambda_1).
\end{align*}
As the function $\lambda \mapsto \nu_1 (\lambda)$ is continuous and 
strictly increasing on $(-\infty, 0]$ by 
Proposition~\ref{Proposition_Main}~(iv) and $\nu_1(\lambda) \to -\infty$ 
as $\lambda \to -\infty$, there exists $\lambda_2 < \lambda_1$ such 
that $\alpha = \nu_1(\lambda_2)$. By 
Theorem~\ref{thm_KreinscheFormel}~(i) $\lambda_2$ is an eigenvalue of 
$\Op$. Thus
\begin{align*}
  \min \sigma (\Op) \leq \lambda_2 < \lambda_1 = 
\min \sigma (- \Delta_{\T, \alpha}),
\end{align*}
which completes the proof of Theorem~\ref{thm:EVmaximizer}.

\subsection{Proof of Theorem~\ref{nrthm}}\label{sec5}

Consider the scattering pair $\{\Opfree,\Op\}$ with $\alpha\in\R\setminus\{0\}$ and fix some 
$\eta<0$ such that $0\in\rho(\overline{B_\eta}-\alpha)$, which is possible
according to Proposition~\ref{Proposition_Main}~(ii) and (iv). As in \eqref{sssi} and \eqref{ttt} consider the symmetric operator
\begin{equation*}
Su= -\Delta u,\qquad \dom S=\bigl\{u\in H^2(\R^3):u\vert_\Sigma=0\bigr\},
\end{equation*}
and the operator
\begin{equation*}
 Tu=-\Delta u-h\delta_\Sigma,\qquad
 \dom T=H^2(\R^3)\,\dot +\,\bigl\{\gamma_\eta h:h\in\dom \overline{B_\eta}\bigr\}, 
\end{equation*}
where $\gamma_\eta h=(-\Delta-\eta)^{-1}(h\delta_\Sigma)$ is as in \eqref{gammaop}. Then $\overline T=S^*$ according to Proposition~\ref{qbtprop}.
Now we slightly modify the boundary maps in Proposition~\ref{qbtprop} such that Theorem~\ref{scatti} can be applied directly to the pair $\{\Opfree,\Op\}$.
More precisely, we claim that $\{L^2(\Sigma),\Gamma_0,\Gamma_1\}$, where
\begin{equation}\label{qbtttt2}
 \Gamma_0 u= h\quad\text{and}\quad\Gamma_1 u= u_c\vert_\Sigma+(\overline{B_\eta}-\alpha)h,\quad u=u_c+\gamma_\eta h\in\dom T,
\end{equation}
is a quasi boundary triple for $S^*$ such that
\begin{equation}\label{passtso}
\Opfree=T\upharpoonright\ker\Gamma_0\quad\text{and}\quad \Op=T\upharpoonright\ker\Gamma_1.
\end{equation}
The $\gamma$-field and Weyl function corresponding to $\{L^2(\Sigma),\Gamma_0,\Gamma_1\}$ are given by
\begin{equation}\label{gm2}
 \gamma(\lambda) h =(-\Delta-\lambda)^{-1}(h\delta_\Sigma)  \quad\text{and} \quad M(\lambda) h= N(\lambda)h+(\overline{B_\eta}-\alpha)h,
\end{equation}
where $\lambda\in\C\setminus[0,\infty)$, $h\in\dom \overline{B_\eta}$, and the function $N$ is as in \eqref{nnn}.

In fact the identities in \eqref{passtso} hold by construction and Proposition~\ref{qbtprop}. In order to verify the abstract Green identity for the boundary maps in \eqref{qbtttt2}
recall from \eqref{ggg} in the proof of Proposition~\ref{qbtprop} that for $u,v\in\dom T$ such that $u=u_c+\gamma_\eta h$ and $v=v_c+\gamma_\eta k$ the identity
\begin{equation*}
 \langle  Tu,v\rangle_{L^2 (\R^3)}  - \langle u,Tv\rangle_{L^2 (\R^3)} =\langle u_c\vert_\Sigma,k\rangle_{L^2(\Sigma)}-\langle h,v_c\vert_\Sigma\rangle_{L^2(\Sigma)}
\end{equation*}
holds. Since $(\overline{B_\eta}-\alpha)$ is a selfadjoint operator in $L^2(\Sigma)$ we have
\begin{equation*}
\begin{split}
 &\langle u_c\vert_\Sigma,k\rangle_{L^2(\Sigma)}-\langle h,v_c\vert_\Sigma\rangle_{L^2(\Sigma)}\\
 &\qquad \qquad=
 \bigl\langle u_c\vert_\Sigma+(\overline{B_\eta}-\alpha)h,k\bigr\rangle_{L^2(\Sigma)}-\bigl\langle h,v_c\vert_\Sigma+(\overline{B_\eta}-\alpha)k\bigr\rangle_{L^2(\Sigma)}\\
 &\qquad\qquad=\langle \Gamma_1 u,\Gamma_0 v\rangle_{L^2(\Sigma)}-\langle \Gamma_0 u,\Gamma_1 v\rangle_{L^2(\Sigma)}
 \end{split}
\end{equation*}
and hence the Green identity is valid. The same argument as in the proof of Proposition~\ref{qbtprop} shows 
that the range of the mapping $u\mapsto (\Gamma_0 u,\Gamma_1 u)^\top$ is dense in $L^2(\Sigma)\times L^2(\Sigma)$. Hence $\{L^2(\Sigma),\Gamma_0,\Gamma_1\}$
is a quasi boundary triple for $S^*$. Since $\Gamma_0$ is the same map as in Proposition~\ref{qbtprop} the corresponding $\gamma$-field has the same form
as in Proposition~\ref{qbtprop}. The form of the Weyl function in \eqref{gm2} follows from
\begin{equation*}
 M(\eta) h=\Gamma_1 \gamma(\eta) h=\Gamma_1(-\Delta-\eta)^{-1}(h\delta_\Sigma)=(\overline{B_\eta}-\alpha)h
\end{equation*}
for $h\in\ran\Gamma_0=\dom\overline{B_\eta}$ and \eqref{nnn} in the same way as in the proof of Proposition~\ref{qbtprop}; cf.~\eqref{eq:uc}, \eqref{eq:integral2},
and Remark~\ref{remchen}.

Now we complete the proof of Theorem~\ref{nrthm}. Consider the quasi boundary triple $\{L^2(\Sigma),\Gamma_0,\Gamma_1\}$ in \eqref{qbtttt2}.
It follows from \eqref{gm2}, \eqref{gammaop} and the proof of Theorem~\ref{thm:ResDiff} that
\begin{equation}\label{useit33}
 \overline{\gamma(\eta)}=\gamma_\eta\in\sS_2\bigl(L^2(\Sigma),L^2(\R^3)\bigr).
\end{equation}
Moreover, since $\eta<0$ was chosen such that $0\in\rho(\overline{B_\eta}-\alpha)$ it is clear that the operator
$M(\eta)^{-1}=(\overline{B_\eta}-\alpha)^{-1}$ is  bounded in $L^2(\Sigma)$. Note also that
\begin{equation*}
 \overline{\text{\rm Im}\, M(\lambda)}=\text{\rm Im}\, N(\lambda),\qquad\lambda\in\C\setminus [0,\infty),
\end{equation*}
holds by~\eqref{gm2}. Hence the assumptions in Theorem~\ref{scatti} are satisfied and the assertions (i), (iii), and (iv) in Theorem~\ref{nrthm} follow.
Observe that by \eqref{gm2} and \eqref{m1}
\begin{align*}
N(\lambda)
&= (\lambda-\eta)\gamma(\eta)^*(\Opfree-\eta)(\Opfree-\lambda)^{-1}\overline{\gamma(\eta)} \\
&= (\lambda-\eta)\gamma(\eta)^*\overline{\gamma(\eta)}
+ (\lambda-\eta)^2\gamma(\eta)^*(\Opfree-\lambda)^{-1}\overline{\gamma(\eta)}
\end{align*}
holds for $\lambda\in\C\setminus [0,\infty)$. Therefore \eqref{useit33} and \cite[Proposition 3.14]{BW83} yield that the limit $N(\lambda+i0)$ exists in the Hilbert-Schmidt
norm for a.e. $\lambda\in[0,\infty)$, that is, assertion (ii) in Theorem~\ref{nrthm} holds. This completes the proof.

\appendix
\section{Quasi boundary triples and their Weyl functions}\label{appi}

In this appendix we briefly review the abstract notions of quasi boundary triples and
their Weyl functions from extension theory of symmetric operators in Hilbert spaces, and relate them to the Schr\"{o}dinger operators $\Opfree$ and $\Op$.
Furthermore, we recall a representation formula for the scattering matrix in terms of the Weyl function of a quasi boundary triple from \cite{BMN15},
which is the main ingredient in the proof of Theorem~\ref{nrthm}.
For more details on quasi boundary triples and their Weyl functions we refer the reader to \cite{BL07,BL12}, and 
for generalized and ordinary boundary triples to \cite{BGP08,DM91,DM95}.
 
\begin{defn}\label{qbtdeffi}
Let $S$ be a densely defined, closed, symmetric operator in a Hilbert space $(\sH, \langle \cdot, \cdot \rangle_\sH)$ 
and assume that $T$ is a linear operator in $\sH$  such that $\overline T=S^*$.
A triple $\{\cG,\Gamma_0,\Gamma_1\}$ is a {\em quasi boundary triple} for $S^*$ if $(\cG, \langle \cdot, \cdot \rangle_\cG)$ is a Hilbert space and 
$\Gamma_0,\Gamma_1:\dom T\rightarrow\cG$ are 
linear mappings such that the following holds.
\begin{enumerate}
\item For all $u,v\in\dom T$ one has
 \begin{equation*}
  \langle Tu,v\rangle_\sH-\langle u,Tv\rangle_\sH=\langle\Gamma_1 u,\Gamma_0 v\rangle_\cG-\langle\Gamma_0 u,\Gamma_1 v\rangle_\cG.
 \end{equation*}
\item The range of the mapping $(\Gamma_0,\Gamma_1)^\top:\dom T\rightarrow\cG\times\cG$ is dense.
\item The operator $A_0:=T\upharpoonright\ker\Gamma_0$ is selfadjoint in $\sH$.
\end{enumerate}
\end{defn}

If $\{\cG,\Gamma_0,\Gamma_1\}$ is a quasi boundary triple for $\overline T=S^*$ then
\begin{equation*}
 S=T\upharpoonright\bigl(\ker\Gamma_0\cap\ker\Gamma_1\bigr).
\end{equation*}
Moreover, if $\ran\Gamma_0=\cG$ then $\{\cG,\Gamma_0,\Gamma_1\}$ is a generalized boundary triple in the sense of \cite[Section 6]{DM95}, and if 
$\ran(\Gamma_0,\Gamma_1)^\top=\cG\times\cG$ then $\{\cG,\Gamma_0,\Gamma_1\}$ is an ordinary boundary triple; cf. \cite{BGP08,DM91}. In the latter case
it follows that $T=S^*$ and hence the abstract Green identity in Definition~\ref{qbtdeffi}~(i) holds for all $u,v\in\dom S^*$. We remark that for an ordinary boundary triple
condition (iii) in Definition~\ref{qbtdeffi} is automatically satisfied. 

A quasi boundary triple $\{\cG,\Gamma_0,\Gamma_1\}$ for $\overline T=S^*$ is a useful tool to describe the extensions of $S$ which are contained in $T$ via
abstract boundary conditions in the auxiliary Hilbert space $\cG$. However,
in this context it is important to note that not all selfadjoint extensions of $S$ in $\sH$ are covered, but only those which are also restrictions of $T$. Furthermore,
a selfadjoint parameter $\Theta$ in $\cG$ does not automatically lead to a selfadjoint extension via
\begin{equation}\label{atti}
 A_\Theta:=T\upharpoonright\ker(\Gamma_1-\Theta\Gamma_0),
\end{equation}
as one is used to from the theory of ordinary boundary triples. In general $A_\Theta$ in \eqref{atti} is only symmetric in $\sH$, not necessarily closed, 
and one has to impose additional conditions on $\Theta$ 
or on other involved objects to ensure selfadjointness of the extension $A_\Theta$, see, e.g. \cite{BL07,BL12}.

Next we recall \cite[Theorem 6.11]{BL12} which is very useful for the construction of quasi boundary triples and provides a method to determine the 
adjoint of a symmetric operator. 

\begin{thm}\label{thm:A2}
Let $T$ be a linear operator in a Hilbert space $(\sH, \langle \cdot, \cdot \rangle_\sH)$, let 
$(\cG, \langle \cdot, \cdot \rangle_\cG)$ be a Hilbert space, and assume that $\Gamma_0,\Gamma_1:\dom T\rightarrow\cG$ are linear mappings
such that the following holds.
\begin{enumerate}
\item For all $u,v\in\dom T$ one has
\begin{equation*}
 \langle Tu,v\rangle_\sH-\langle u,Tv\rangle_\sH=\langle\Gamma_1 u,\Gamma_0 v\rangle_\cG-\langle\Gamma_0 u,\Gamma_1 v\rangle_\cG.
\end{equation*}
\item $\ran(\Gamma_0,\Gamma_1)^\top$ is dense in $\cG\times\cG$ and $\ker\Gamma_0\cap\ker\Gamma_1$ is dense in $\sH$.
\item There exists a selfadjoint operator $A_0$ in $\sH$ such that $A_0\subset T\upharpoonright\ker\Gamma_0$.
\end{enumerate}
Then $S:=T\upharpoonright(\ker\Gamma_0\cap\ker\Gamma_1)$ is a densely defined, closed, symmetric operator in $\sH$ such that $\overline T=S^*$, and 
$\{\cG,\Gamma_0,\Gamma_1\}$ is a quasi boundary triple for $S^*$ with $A_0= T\upharpoonright\ker\Gamma_0$.
\end{thm}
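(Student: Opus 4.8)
The plan is to verify directly the three defining conditions of a quasi boundary triple in Definition~\ref{qbtdeffi}: conditions~(i) and~(ii) there coincide with parts of the hypotheses (the abstract Green identity and the density of $\ran(\Gamma_0,\Gamma_1)^\top$), so the real work consists of three points --- that $S:=T\upharpoonright(\ker\Gamma_0\cap\ker\Gamma_1)$ is a closed symmetric operator, that $T\upharpoonright\ker\Gamma_0$ is selfadjoint and equals the given $A_0$, and that $\overline T=S^*$.

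First I would draw two immediate consequences from the Green identity. Applying it with $v\in\ker\Gamma_0\cap\ker\Gamma_1=\dom S$ and arbitrary $u\in\dom T$, all boundary terms vanish, which shows at once that $S$ is symmetric and that $T\subset S^*$; since $\dom S$ is dense by hypothesis, $S^*$ is a well-defined closed operator, $S\subset T\subset S^*$, and in particular $T$ is closable with closure $\overline T$. Applying Green with $u,v\in\ker\Gamma_0$ shows that $\widetilde A_0:=T\upharpoonright\ker\Gamma_0$ is symmetric; since it contains the selfadjoint operator $A_0$, the chain $A_0\subset\widetilde A_0\subset\widetilde A_0^*\subset A_0^*=A_0$ forces $\widetilde A_0=A_0$, so $T\upharpoonright\ker\Gamma_0=A_0$ is selfadjoint, which is condition~(iii) of Definition~\ref{qbtdeffi}.

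The core of the argument is the identity $\overline T=S^*$, which I would obtain by showing $(\overline T)^*=S$. From $S\subset T\subset\overline T\subset S^*$ one gets, taking adjoints, $\overline S=S^{**}\subset(\overline T)^*$. Conversely, since $A_0=T\upharpoonright\ker\Gamma_0\subset\overline T$ with $A_0$ selfadjoint, taking adjoints yields the crucial inclusion $(\overline T)^*\subset A_0^*=A_0$; hence every $v\in\dom(\overline T)^*$ lies in $\dom A_0\subset\dom T$ and satisfies $\Gamma_0 v=0$. For such $v$ and arbitrary $u\in\dom T$ the defining relation of the adjoint reads $\langle Tu,v\rangle=\langle u,Tv\rangle$, so the Green identity collapses to $\langle\Gamma_0 u,\Gamma_1 v\rangle=0$ for all $u\in\dom T$; as $\{\Gamma_0 u:u\in\dom T\}$ is dense in $\cG$ --- it is the first component of the dense set $\ran(\Gamma_0,\Gamma_1)^\top$ --- this forces $\Gamma_1 v=0$, so $v\in\dom S$ and $(\overline T)^* v=Tv=Sv$. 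Therefore $\overline S\subset(\overline T)^*\subset S\subset\overline S$, whence $S=\overline S=(\overline T)^*$; in particular $S$ is closed, and a final passage to the adjoint gives $\overline T=(\overline T)^{**}=S^*$.

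With these facts $S$ is densely defined, closed and symmetric with $\overline T=S^*$, and $\{\cG,\Gamma_0,\Gamma_1\}$ fulfils conditions~(i)--(iii) of Definition~\ref{qbtdeffi}, hence is a quasi boundary triple for $S^*$ with $A_0=T\upharpoonright\ker\Gamma_0$. I expect the one genuinely delicate step to be the inclusion $(\overline T)^*\subset A_0$: it is exactly here that hypothesis~(iii) enters, and it is what renders the Green identity usable on elements of $\dom(\overline T)^*$, which a priori are only known to lie in $\dom S^*$, a space on which $\Gamma_0$ and $\Gamma_1$ need not be defined; without it the adjoint relation cannot be reduced to a statement about boundary values, and the density of $\ran(\Gamma_0,\Gamma_1)^\top$ has nothing to act on.
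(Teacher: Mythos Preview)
The paper does not give its own proof of this theorem; it merely recalls it from \cite[Theorem~6.11]{BL12}. Your argument is correct and is in fact the standard one: the passage from the hypotheses to $T\upharpoonright\ker\Gamma_0=A_0$ via the symmetry chain, and then the identification $(\overline T)^*=S$ by first trapping $\dom(\overline T)^*$ inside $\dom A_0$ and then using the density of $\ran\Gamma_0$, is exactly the mechanism behind the result in \cite{BL12}. Your closing remark about where hypothesis~(iii) enters is also on point.
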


Next we recall the notion of the $\gamma$-field and Weyl function associated to a quasi boundary triple $\{\cG,\Gamma_0,\Gamma_1\}$ for $\overline T=S^*$.
First of all it follows from the direct sum decomposition $\dom T=\dom A_0\dot +\ker(T-\lambda)$, $\lambda\in\rho(A_0)$, and $\dom A_0=\ker\Gamma_0$ 
that the restriction of the boundary map $\Gamma_0$ onto $\ker(T-\lambda)$ is invertible. The inverse 
\begin{align*}
 \gamma(\lambda)=\bigl(\Gamma_0\upharpoonright\ker(T-\lambda)\bigr)^{-1},\qquad\lambda\in\rho(A_0),
\end{align*}
is a densely defined operator from $\cG$ into $\sH$. The function $\lambda\mapsto\gamma(\lambda)$ is called the $\gamma$-field associated to 
$\{\cG,\Gamma_0,\Gamma_1\}$. The Weyl function $M$ associated to 
$\{\cG,\Gamma_0,\Gamma_1\}$ is defined by
\begin{align*}
 M(\lambda)=\Gamma_1\bigl(\Gamma_0\upharpoonright\ker(T-\lambda)\bigr)^{-1},\qquad\lambda\in\rho(A_0). 
\end{align*}
The values $M(\lambda)$ of the Weyl function are densely defined operators in $\cG$, which may be unbounded and not closed in general. 
If one views the boundary maps $\Gamma_0$ and $\Gamma_1$ as abstract
Dirichlet and Neumann trace maps then the values of the Weyl function can be interpreted as abstract analogues of the Dirichlet-to-Neumann map in the theory of elliptic PDEs.
For $\lambda,\mu\in\rho(A_0)$ and $h\in\ran\Gamma_0$ we note the useful identities
\begin{equation}\label{okfine}
\gamma(\lambda)^*=\Gamma_1(A_0-\overline{\lambda})^{-1}
\end{equation}
and
\begin{equation}\label{g1}
 \gamma(\lambda)h=(A_0-\mu)(A_0-\lambda)^{-1}\gamma(\mu)h
\end{equation}
as well as
\begin{equation}\label{m1}
 M(\lambda)h=M(\mu)^*h+(\lambda-\mu)\gamma(\mu)^*(A_0-\mu)(A_0-\lambda)^{-1}\gamma(\mu)h
\end{equation}
for the $\gamma$-field and Weyl function, and refer the reader for more details and proofs of the above identities to \cite{BL07,BL12}.

The following theorem from \cite{BL07,BL12} contains a Krein type resolvent formula and provides a criterion to show selfadjointness of the extension $A_\Theta$
in~\eqref{atti}.

\begin{thm}\label{ratebitte}
 Let $S$ be a densely defined, closed, symmetric operator in a Hilbert space $(\sH, \langle \cdot, \cdot \rangle_\sH)$ and let
 $\{\cG,\Gamma_0,\Gamma_1\}$ be a quasi boundary triple for $\overline T=S^*$ with $A_0= T\upharpoonright\ker\Gamma_0$ and $\gamma$-field $\gamma$
 and Weyl function $M$. Let $\Theta$ be an operator in $\cG$ and let 
 $$A_\Theta=T\upharpoonright\ker(\Gamma_1-\Theta\Gamma_0).$$ Assume, in addition,  that $\lambda\in\rho(A_0)$ is not an eigenvalue of $A_\Theta$ or, equivalently, $\ker(\Theta-M(\lambda))=\{0\}$.
 Then the following assertions hold.
\begin{enumerate}
\item $u\in\ran(A_\Theta-\lambda)$ if and only if $\gamma(\overline{\lambda})^*u\in\dom(\Theta-M(\lambda))^{-1}$.
\item For all $u\in\ran(A_\Theta-\lambda)$ one has
\begin{equation}\label{kreini}
(A_\Theta-\lambda)^{-1}u=(A_0-\lambda)^{-1}u+\gamma(\lambda)\bigl(\Theta-M(\lambda)\bigr)^{-1}\gamma(\overline{\lambda})^*u.
\end{equation}
\end{enumerate}
In particular, if $\Theta$ is a symmetric operator in $\cG$ and $\ran \gamma(\overline{\lambda})^*$ is contained in $\dom(\Theta-M(\lambda))^{-1}$ for some $\lambda\in\C^+$ and some $\lambda\in\C^-$ then
$A_\Theta$ is selfadjoint in $\sH$ and the resolvent formula \eqref{kreini} holds for all $\lambda\in\rho(A_\Theta)\cap\rho(A_0)$ and all 
$u\in\sH$.
\end{thm}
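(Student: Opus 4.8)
The plan is to reduce everything to the boundary parametrisation of the extension, using that for every $\lambda\in\rho(A_0)$ one has the direct sum decomposition $\dom T=\dom A_0\,\dot +\,\ker(T-\lambda)$ (valid because $A_0=T\upharpoonright\ker\Gamma_0$ is selfadjoint) together with the relations $v_\lambda=\gamma(\lambda)\Gamma_0 v$ and $\Gamma_1 v_\lambda=M(\lambda)\Gamma_0 v$ for $v_\lambda\in\ker(T-\lambda)$. First I would settle the equivalence stated in the hypothesis: if $h\in\ker(\Theta-M(\lambda))$, then $v=\gamma(\lambda)h\in\ker(T-\lambda)$ satisfies $\Gamma_0 v=h$ and $\Gamma_1 v=M(\lambda)h=\Theta h=\Theta\Gamma_0 v$, hence $v\in\ker(A_\Theta-\lambda)$; conversely any $v\in\ker(A_\Theta-\lambda)$ lies in $\ker(T-\lambda)$, so $v=\gamma(\lambda)\Gamma_0 v$ with $\Gamma_0 v\neq 0$ (since $\gamma(\lambda)$ is injective), and $\Gamma_1 v=M(\lambda)\Gamma_0 v=\Theta\Gamma_0 v$ forces $\Gamma_0 v\in\ker(\Theta-M(\lambda))$. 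Thus, under the standing assumption, $\Theta-M(\lambda)$ is injective, $(\Theta-M(\lambda))^{-1}$ is a single-valued operator with domain $\ran(\Theta-M(\lambda))$, and $A_\Theta-\lambda$ is injective.

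For (i) and (ii) I would argue in both directions. Let $v\in\dom A_\Theta$ and write $v=v_0+v_\lambda$ with $v_0\in\dom A_0=\ker\Gamma_0$ and $v_\lambda\in\ker(T-\lambda)$. Put $u:=(A_\Theta-\lambda)v=(A_0-\lambda)v_0$, so $v_0=(A_0-\lambda)^{-1}u$ and, by \eqref{okfine}, $\Gamma_1 v_0=\Gamma_1(A_0-\lambda)^{-1}u=\gamma(\overline{\lambda})^*u$. The boundary condition $\Gamma_1 v=\Theta\Gamma_0 v$ combined with $\Gamma_1 v=\Gamma_1 v_0+M(\lambda)\Gamma_0 v$ and $\Gamma_0 v=\Gamma_0 v_\lambda$ then reads $(\Theta-M(\lambda))\Gamma_0 v=\gamma(\overline{\lambda})^*u$. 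Hence $\gamma(\overline{\lambda})^*u\in\ran(\Theta-M(\lambda))=\dom(\Theta-M(\lambda))^{-1}$, $\Gamma_0 v=(\Theta-M(\lambda))^{-1}\gamma(\overline{\lambda})^*u$, and $v=v_0+\gamma(\lambda)\Gamma_0 v$ is precisely the right-hand side of \eqref{kreini}. Conversely, given $u\in\sH$ with $\gamma(\overline{\lambda})^*u\in\dom(\Theta-M(\lambda))^{-1}$, set $w:=(\Theta-M(\lambda))^{-1}\gamma(\overline{\lambda})^*u$ and $v:=(A_0-\lambda)^{-1}u+\gamma(\lambda)w$. Then $v\in\dom T$, $(T-\lambda)v=u$ (the second term lies in $\ker(T-\lambda)$), $\Gamma_0 v=w$, and $\Gamma_1 v=\gamma(\overline{\lambda})^*u+M(\lambda)w=\Theta w=\Theta\Gamma_0 v$, so $v\in\dom A_\Theta$ with $(A_\Theta-\lambda)v=u$; hence $u\in\ran(A_\Theta-\lambda)$ and, since $A_\Theta-\lambda$ is injective, $v=(A_\Theta-\lambda)^{-1}u$. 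Both assertions follow.

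For the ``in particular'' part I would first observe that $A_\Theta$ is symmetric whenever $\Theta$ is: the abstract Green identity gives, for $u,v\in\dom A_\Theta$, $\sk{A_\Theta u,v}_{\sH}-\sk{u,A_\Theta v}_{\sH}=\sk{\Theta\Gamma_0 u,\Gamma_0 v}_{\cG}-\sk{\Gamma_0 u,\Theta\Gamma_0 v}_{\cG}=0$. Consequently, for any non-real $\lambda$ the operator $A_\Theta-\lambda$ is injective and $\lambda\in\rho(A_0)$, so part (i) applies, and the hypothesis $\ran\gamma(\overline{\lambda})^*\subseteq\dom(\Theta-M(\lambda))^{-1}$ yields $\ran(A_\Theta-\lambda)=\sH$. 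Using this for one $\lambda\in\C^+$ and one $\lambda\in\C^-$ and the standard fact that a symmetric operator $A$ with $\ran(A-\lambda)=\sH$ for some $\lambda$ in each open half-plane is selfadjoint, we conclude $A_\Theta=A_\Theta^*$. Finally, for $\lambda\in\rho(A_\Theta)\cap\rho(A_0)$ one has $\ran(A_\Theta-\lambda)=\sH$, so by (i) the condition $\gamma(\overline{\lambda})^*u\in\dom(\Theta-M(\lambda))^{-1}$ holds for every $u\in\sH$, and \eqref{kreini} is valid for all $u\in\sH$.

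The place where care is needed is the bookkeeping of domains of the generally unbounded, non-closed operators $\gamma(\lambda)$, $M(\lambda)$ and $\Theta$. One must check at each step that the arguments lie in the relevant domains: that $\Gamma_0 v\in\dom\gamma(\lambda)=\dom M(\lambda)=\ran\bigl(\Gamma_0\upharpoonright\ker(T-\lambda)\bigr)$ (which holds since $\Gamma_0 v=\Gamma_0 v_\lambda$ with $v_\lambda\in\ker(T-\lambda)$), that $w\in\dom\Theta\cap\dom M(\lambda)=\dom(\Theta-M(\lambda))$ so that both $\Theta w$ and $M(\lambda)w$ are meaningful and $\Gamma_0 v=w\in\dom\Theta$, and that $(A_0-\lambda)^{-1}u\in\dom A_0\subseteq\dom T$ so that $\Gamma_0,\Gamma_1$ may be applied to it. None of this is deep, but it is the only genuine subtlety; once the domains are kept straight, the argument is exactly the formal manipulation above. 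We note that this statement is a version of results from \cite{BL07,BL12}, to which one may alternatively simply refer.
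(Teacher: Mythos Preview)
The paper does not prove this theorem: it is stated in the appendix as a result taken from \cite{BL07,BL12}, with no argument given. Your proof is correct and is precisely the standard decomposition argument one finds in those references, so there is nothing to compare; you have simply supplied the proof that the paper outsources. Your closing remark that one may alternatively refer to \cite{BL07,BL12} is exactly what the paper does.
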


Next we provide a slightly generalized variant of the representation formula for the scattering matrix from \cite{BMN15}.  
Let again $S$ be a densely defined, closed, symmetric operator in a Hilbert space $(\sH, \langle \cdot, \cdot \rangle_\sH)$ and let
 $\{\cG,\Gamma_0,\Gamma_1\}$ be a quasi boundary triple for $\overline T=S^*$ with $A_0= T\upharpoonright\ker\Gamma_0$ and $\gamma$-field $\gamma$
 and Weyl function $M$. Assume, in addition, that the extension
 \begin{equation*}
  A_1=T\upharpoonright\ker\Gamma_1
 \end{equation*}
is selfadjoint in $\sH$; in general $A_1$ is only symmetric in $\sH$ and not necessarily closed.
Denote the absolutely continuous subspaces of $A_0$ and $A_1$ by
$\sH^{\rm ac}(A_0)$ and $\sH^{\rm ac}(A_1)$, respectively, let $P^{\rm ac}(A_0)$ be the orthogonal projection onto  $\sH^{\rm ac}(A_0)$ and
let 
$$A_0^{\rm ac}=A_0\upharpoonright \bigl(\dom A_0\cap\sH^{\rm ac}(A_0)\bigr)$$ 
in $\sH^{\rm ac}(A_0)$ be the absolutely continuous part of $A_0$.
If the difference of the resolvents of $A_0$ and $A_1$ is a trace class operator, that is,
\begin{equation}\label{tracechen}
 (A_1-\lambda)^{-1}-(A_0-\lambda)^{-1}\in\sS_1(\sH)
\end{equation}
for some, and hence for all, $\lambda\in\rho(A_0)\cap\rho(A_1)$ then the wave operators
\begin{equation*}
  W_\pm(A_0,A_1):=s-\lim_{t\rightarrow\pm\infty} e^{itA_1} e^{-itA_0} P^{\rm ac}(A_0)
\end{equation*}
exist and satisfy $\ran W_\pm(A_0,A_1)=\sH^{\rm ac}(A_1)$ according to the Birman--Krein theorem \cite{BK}. It follows that the scattering operator 
$$S(A_0,A_1):=W_+(A_0,A_1)^*W_-(A_0,A_1)$$ 
is unitary in the absolutely continuous subspace $\sH^{\rm ac}(A_0)$ of $A_0$, and that $S(A_0,A_1)$ is unitarily equivalent to a multiplication
operator $\{S(\lambda)\}_{\lambda\in\R}$ in a spectral representation of 
the absolutely continuous part $A_0^{\rm ac}$ of $A_0$. The family $\{S(\lambda)\}_{\lambda\in\R}$  is called the {\it scattering matrix} of the pair $\{A_0,A_1\}$;
cf. \cite{BW83,K,RS79,Y92}.

In general the underlying closed symmetric operator $S$ is not simple (or completely non-selfadjoint) 
and hence its selfadjoint part is reflected in the scattering matrix of $\{A_0,A_1\}$.
More precisely, if $S$ is not simple then
there is a nontrivial orthogonal decomposition of the Hilbert space $\sH=\sH_1\oplus\sH_2$ such that
\begin{equation}\label{sssdeco}
 S=S_1\oplus S_2,
\end{equation}
where $S_1$ is a simple symmetric operator in $\sH_1$ and $S_2$ is a selfadjoint operator in $\sH_2$.
Since $A_0$ and $A_1$ are selfadjoint extensions of $S$ there exist selfadjoint extensions
$B_0$ and $B_1$ of $S_1$ in $\sH_1$ such that
\begin{equation}\label{bb}
 A_0=B_0\oplus S_2\qquad\text{and}\qquad A_1=B_1\oplus S_2.
\end{equation}
In the following let $L^2(\R,d\lambda,\cH_\lambda)$ be a spectral representation of the absolutely continuous part $S_2^{\rm ac}$ of the selfadjoint operator $S_2$ in $\sH_2$.

Now we can formulate a variant of \cite[Theorem 3.1 and Corollary 3.3]{BMN15} which is suitable for our purposes. Instead of generalized boundary triples the result is stated 
for quasi boundary triples here. 

\begin{thm}\label{scatti}
Let $S$ be a densely defined, closed, symmetric operator in $\sH$ decomposed in the form \eqref{sssdeco} and let $\{\cG,\Gamma_0,\Gamma_1\}$ be a quasi 
boundary triple for $\overline T=S^*$ with $A_0= T\upharpoonright\ker\Gamma_0$ and $\gamma$-field $\gamma$ and Weyl function $M$. Assume that the extension 
$A_1=T\upharpoonright\ker\Gamma_1$ is selfadjoint in $\sH$ and let $B_0$ and $B_1$ be selfadjoint operators as in \eqref{bb}. Furthermore, suppose that 
 \begin{equation*}
 \overline{\gamma(\lambda_0)}\in\sS_2(\cG,\sH)\quad\text{for some}\,\,\,\,\lambda_0\in\rho(A_0),
 \end{equation*}
 and that 
 $M(\lambda_1)^{-1}$ is a bounded operator in $\cG$ for some $\lambda_1\in\rho(A_0)\cap\rho(A_1)$.
 Then \eqref{tracechen} is satisfied for all $\lambda\in\rho(A_0)\cap\rho(A_1)$ and the following assertions hold.
\begin{enumerate}
\item $\overline{\text{\rm Im}\, M(\lambda)}\in\sS_1(\cG)$ for all $\lambda\in\rho(A_0)$ and
the limit 
$$\overline{\text{\rm Im}\, M(\lambda+i0)} := \lim_{\varepsilon\searrow 0 }\overline{\text{\rm Im}\, M(\lambda+i\varepsilon)}$$
exists in $\sS_1(\cG)$ for a.e. $\lambda\in\R$.
\item For all $\varphi\in\ran\Gamma_0$ and a.e. $\lambda\in\R$ the limit
$$M(\lambda \pm i0)\varphi := \lim_{ \varepsilon\searrow 0 }M(\lambda \pm i\varepsilon)\varphi$$
exists and the operators $M(\lambda \pm i0)$ are closable with boundedly invertible closures $\overline{M(\lambda \pm i0)}$.
\item The space $L^2(\R,d\lambda,\cG_\lambda\oplus\cH_\lambda)$, where
 \begin{equation*}
  \cG_\lambda:=\overline{\ran\bigl(\overline{\text{\rm Im}\, M(\lambda+i0)}\bigr)}\quad \text{for a.e.}\,\,\,\lambda\in\R,
 \end{equation*}
forms a spectral representation of $A_0^{\rm ac}$.
 \item The scattering matrix $\{S(\lambda)\}_{\lambda\in\R}$ of the scattering system $\{A_0,A_1\}$
acting in the space $L^2(\R,d\lambda,\cG_\lambda\oplus\cH_\lambda)$ admits the representation
\begin{equation*}
 S(\lambda)=\begin{pmatrix} S'(\lambda) & 0 \\ 0 & I_{\cH_\lambda}\end{pmatrix}
\end{equation*}
for a.e. $\lambda\in\R$, where
\begin{equation*}
 S'(\lambda)=I_{\cG_\lambda}-2i\sqrt{\overline{\text{\rm Im}\, M(\lambda+i0)}}\,\left(\overline{M(\lambda+i0)}\right)^{-1} \sqrt{\overline{\text{\rm Im}\, M(\lambda+i0)}}
\end{equation*}
is the scattering matrix of the scattering system $\{B_0,B_1\}$.
\end{enumerate}
\end{thm}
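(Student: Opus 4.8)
The plan is to follow the proof of \cite[Theorem~3.1 and Corollary~3.3]{BMN15} step by step; the only genuinely new point is that $\{\cG,\Gamma_0,\Gamma_1\}$ is merely a quasi boundary triple, so that $\ran\Gamma_0$ is dense but need not equal $\cG$ and the values $M(\lambda)$ may be unbounded and non-closed. First I would assemble the operator-ideal bookkeeping. From $\overline{\gamma(\lambda_0)}\in\sS_2(\cG,\sH)$ together with the identity \eqref{g1} it follows that $\overline{\gamma(\lambda)}\in\sS_2(\cG,\sH)$ for every $\lambda\in\rho(A_0)$, depending $\sS_2$-analytically on $\lambda$, and consequently $\gamma(\lambda)^*=\overline{\gamma(\lambda)}^*\in\sS_2(\sH,\cG)$. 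From \eqref{m1} (with $\mu=\overline\lambda$) one reads off that, on the dense set $\ran\Gamma_0$, the operator $\text{\rm Im}\, M(\lambda)$ factors as $\gamma(\overline\lambda)^*\,B(\lambda)\,\overline{\gamma(\overline\lambda)}$ with $B(\lambda)$ bounded, so that its closure $\overline{\text{\rm Im}\, M(\lambda)}$ is a product of two Hilbert--Schmidt operators and a bounded one and therefore lies in $\sS_1(\cG)$; this gives the first half of assertion~(i), and the existence of the $\sS_1$-limit as $\varepsilon\searrow 0$ for a.e.\ $\lambda\in\R$ follows from the Hilbert--Schmidt boundary-value results used in \cite{BMN15} (cf.\ also \cite[Proposition~3.14]{BW83}) applied to $\overline{\gamma(\lambda)}$, exactly as in the proof of Theorem~\ref{nrthm}~(ii).

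Next I would derive the Krein formula and the trace-class property \eqref{tracechen}. Since $M$ is analytic near $\lambda_1$ and $M(\lambda_1)^{-1}$ is a bounded, everywhere-defined operator in $\cG$, there are points $\lambda_+\in\C^+$ and $\lambda_-\in\C^-$ near $\lambda_1$ for which $M(\lambda_\pm)^{-1}$ is bounded and everywhere-defined; hence $\ran\gamma(\overline{\lambda_\pm})^*\subseteq\cG=\dom M(\lambda_\pm)^{-1}$, and Theorem~\ref{ratebitte}, applied with the symmetric parameter $\Theta=0$ (so that $A_\Theta=A_1$), yields
\begin{equation*}
 (A_1-\lambda)^{-1}-(A_0-\lambda)^{-1}=-\,\overline{\gamma(\lambda)}\,M(\lambda)^{-1}\gamma(\overline\lambda)^*
\end{equation*}
for all $\lambda\in\rho(A_0)\cap\rho(A_1)$, and in particular that $M(\lambda)^{-1}$ is bounded there. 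The right-hand side is a product of a Hilbert--Schmidt, a bounded, and a Hilbert--Schmidt operator, hence belongs to $\sS_1(\sH)$, which is \eqref{tracechen}; by the Birman--Krein theorem \cite{BK} the wave operators $W_\pm(A_0,A_1)$ then exist and are complete and the scattering operator is unitary. At this point I would reduce to simple $S$: in the decompositions \eqref{sssdeco} and \eqref{bb} the selfadjoint summand $S_2$ contributes nothing to the scattering, so $W_\pm(A_0,A_1)=W_\pm(B_0,B_1)\oplus I_{\sH_2^{\rm ac}}$, the scattering matrix of $\{A_0,A_1\}$ is the block diagonal of that of $\{B_0,B_1\}$ and $I_{\cH_\lambda}$, and since $\ker(T-\lambda)\subset\sH_1$ for $\lambda\in\rho(A_0)$ the $\gamma$-field and Weyl function are unchanged when $S$ is replaced by $S_1$; thus it suffices to treat $\sH_2=\{0\}$.

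The remaining, and principal, work is the explicit representation of $S'(\lambda)$, for which I would follow \cite{BMN15} essentially verbatim: starting from the above factorisation of the resolvent difference through $\cG$, one builds from the $\gamma$-field a spectral representation of $A_0^{\rm ac}$ that identifies the multiplicity space at energy $\lambda$ with $\cG_\lambda=\overline{\ran\,\overline{\text{\rm Im}\, M(\lambda+i0)}}$ (this is assertion~(iii)), and the abstract stationary scattering formula then delivers
\begin{equation*}
 S'(\lambda)=I_{\cG_\lambda}-2i\sqrt{\overline{\text{\rm Im}\, M(\lambda+i0)}}\,\bigl(\overline{M(\lambda+i0)}\bigr)^{-1}\sqrt{\overline{\text{\rm Im}\, M(\lambda+i0)}}
\end{equation*}
for a.e.\ $\lambda\in\R$; the Nevanlinna property of $M$ together with the $\sS_1$-bound on $\text{\rm Im}\, M$ supplies, along the way, the existence and bounded invertibility of the boundary values $\overline{M(\lambda\pm i0)}$, which is assertion~(ii).

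I expect the main obstacle to be precisely the passage from the generalized-boundary-triple framework of \cite{BMN15} to the quasi-boundary-triple framework here, i.e.\ verifying that every operator product occurring in the stationary argument — involving the possibly unbounded $M(\lambda)$, its imaginary part, and the closures $\overline{M(\lambda\pm i0)}$ — is meaningful. The cleanest way around this is to observe that, under the present hypotheses, $\overline{\gamma(\lambda)}$ is bounded (indeed $\sS_2$) and $M(\lambda)^{-1}$ is bounded on all of $\rho(A_0)\cap\rho(A_1)$, so that one may work throughout with $\overline{\gamma(\lambda)}$ and $M(\lambda)^{-1}$ in place of $\gamma(\lambda)$ and $M(\lambda)$; all the relevant identities then extend by continuity from the dense set $\ran\Gamma_0$, and the argument of \cite{BMN15} carries over without further change.
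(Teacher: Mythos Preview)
The paper does not actually prove this theorem: it is stated in the appendix as ``a variant of \cite[Theorem 3.1 and Corollary 3.3]{BMN15}'' adapted from generalized to quasi boundary triples, and no proof is supplied. Your outline is precisely the adaptation the paper implicitly defers to \cite{BMN15}: the $\sS_2$-propagation of $\overline{\gamma(\lambda)}$ via \eqref{g1}, the $\sS_1$-factorisation of $\overline{\text{\rm Im}\, M(\lambda)}$ via \eqref{m1}, the Krein formula from Theorem~\ref{ratebitte} with $\Theta=0$, the splitting off of the selfadjoint part $S_2$, and the stationary scattering argument from \cite{BMN15} for $S'(\lambda)$, together with your observation that the quasi-boundary-triple technicalities are harmless here because $\overline{\gamma(\lambda)}$ and $M(\lambda)^{-1}$ are bounded under the stated hypotheses. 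There is nothing to compare against in the paper itself; your plan is the expected one and is sound.
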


In the following we show how the objects of this manuscript fit in the abstract scheme of quasi boundary triples. Let $\Opfree$ be the selfadjoint Laplacian in $L^2(\R^3)$
with domain $H^2(\R^3)$ and let $\Op$ be the Schr\"{o}dinger operator with a $\delta$-interaction of strength $\tfrac{1}{\alpha}$ supported on $\Sigma$ from Definition \ref{schdef}.
Consider the  symmetric operator 
\begin{equation}\label{sssi}
 Su=-\Delta u,\qquad\dom S=\bigl\{u\in H^2(\R^3):u\vert_\Sigma=0\bigr\},
\end{equation}
and define the operator $T$ in $L^2(\R^3)$ by
\begin{equation}\label{ttt}
 Tu=-\Delta u-h\delta_\Sigma,\qquad
 \dom T=H^2(\R^3)\,\dot +\,\bigl\{\gamma_\eta h:h\in\dom \overline{B_\eta}\bigr\}, 
\end{equation}
where $\eta<0$ is chosen such that $0\in\rho(\overline{B_\eta}-\alpha)$ (see Proposition~\ref{Proposition_Main}~(ii) and (iv)) and $\gamma_\eta h=(-\Delta-\eta)^{-1}(h\delta_\Sigma)$ 
is as in \eqref{gammaop}. It follows from the remark below Definition~\ref{def:genTrace} that the sum in the definition of $\dom T$
is direct. Furthermore, $T$ is a well-defined operator in $L^2(\R^3)$ since for an element $u=u_c+\gamma_\eta h\in\dom T$ with $u_c\in H^2(\R^3)$ 
and $h\in\dom \overline{B_\eta}$ one has
\begin{equation}\label{zuzu}
\begin{split}
 -\Delta u-h\delta_\Sigma&=(-\Delta-\eta)(u_c+\gamma_\eta h)+\eta (u_c+\gamma_\eta h)-h\delta_\Sigma\\
 &=-\Delta u_c +\eta \gamma_\eta h \in L^2(\R^3).
\end{split}
 \end{equation}
Note also that
\begin{equation}\label{kernt}
 \ker(T-\eta)=\bigl\{\gamma_\eta h:h\in\dom \overline{B_\eta}\bigr\}.
\end{equation}

In the next proposition we specify a quasi boundary triple 
$\{L^2(\Sigma),\Gamma_0,\Gamma_1\}$ for the adjoint of the symmetric operator $S$
such that $\Opfree=T\upharpoonright\ker\Gamma_0$. 

\begin{prop}\label{qbtprop}
The operator $S$ in \eqref{sssi} is densely defined, closed and symmetric in $L^2(\R^3)$ and satisfies $S^*=\overline T$ with $T$ in \eqref{ttt}.
The triple $\{L^2(\Sigma),\Gamma_0,\Gamma_1\}$, where
\begin{equation}\label{qbtttt}
 \Gamma_0 u= h\quad\text{and}\quad\Gamma_1 u= u_c\vert_\Sigma,\qquad u=u_c+\gamma_\eta h\in\dom T,
\end{equation}
is a quasi boundary triple for $S^*$ such that $\ran\Gamma_0=\dom \overline{B_\eta}$, 
\begin{equation}\label{ddx}
\Opfree=T\upharpoonright\ker\Gamma_0\quad\text{and}\quad \Op=T\upharpoonright \ker\bigl(\Gamma_1-(\alpha-\overline{B_\eta})\Gamma_0\bigr).
\end{equation}
The $\gamma$-field and Weyl function corresponding to $\{L^2(\Sigma),\Gamma_0,\Gamma_1\}$ are given by
\begin{equation}\label{gm}
 \gamma(\lambda) h = (-\Delta-\lambda)^{-1}(h\delta_\Sigma)
\end{equation}
and
\begin{equation}\label{mmi}
M(\lambda) h =\bigl[\bigl((-\Delta-\lambda)^{-1}-(-\Delta-\eta)^{-1}\bigr) h\delta_\Sigma\bigr]\vert_\Sigma,
\end{equation}
for all $\lambda\in\C\setminus[0,\infty)$ and $h\in\dom \overline{B_\eta}$. The values $M(\lambda)$ of the Weyl function are densely defined bounded operators in $L^2(\Sigma)$.
\end{prop}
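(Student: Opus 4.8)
The plan is to deduce the statement from Theorem~\ref{thm:A2}, applied with $\sH=L^2(\R^3)$, $\cG=L^2(\Sigma)$, the operator $T$ in \eqref{ttt}, and the maps $\Gamma_0,\Gamma_1$ in \eqref{qbtttt}; these maps are well defined because the sum defining $\dom T$ is direct (see the remark below Definition~\ref{def:genTrace}). First I would record the elementary consequences of the direct-sum structure of $\dom T$: one has $\ker\Gamma_0=H^2(\R^3)$, on which $Tu=-\Delta u$ by \eqref{zuzu}, so that $\Opfree=T\upharpoonright\ker\Gamma_0$; this already gives one of the asserted identities and verifies hypothesis~(iii) of Theorem~\ref{thm:A2}. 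Likewise $\ran\Gamma_0=\dom\overline{B_\eta}$, which is dense in $L^2(\Sigma)$ since $\overline{B_\eta}$ is selfadjoint by Proposition~\ref{Proposition_Main}~(i). For hypothesis~(ii): the subspace $\ker\Gamma_0\cap\ker\Gamma_1=\{u\in H^2(\R^3):u\vert_\Sigma=0\}$ contains $C_0^\infty(\R^3\setminus\Sigma)$ and is hence dense in $L^2(\R^3)$; and since adding an element of $H^2(\R^3)$ alters $\Gamma_1$ but not $\Gamma_0$ while adding $\gamma_\eta h$ alters $\Gamma_0$ but not $\Gamma_1$, the range of $(\Gamma_0,\Gamma_1)^\top$ contains $\dom\overline{B_\eta}\times\{u_c\vert_\Sigma:u_c\in H^2(\R^3)\}$, which is dense in $L^2(\Sigma)\times L^2(\Sigma)$ because the trace map on $H^2(\R^3)$ has dense range.

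The remaining hypothesis~(i), the abstract Green identity, is the technical core. For $u=u_c+\gamma_\eta h$ and $v=v_c+\gamma_\eta k$ in $\dom T$ I would use $Tu=-\Delta u_c+\eta\gamma_\eta h$ and $Tv=-\Delta v_c+\eta\gamma_\eta k$ (cf.~\eqref{zuzu}) and expand $\langle Tu,v\rangle_{L^2}-\langle u,Tv\rangle_{L^2}$; the terms $\eta\langle\gamma_\eta h,\gamma_\eta k\rangle$ cancel, $\langle-\Delta u_c,v_c\rangle-\langle u_c,-\Delta v_c\rangle=0$ by selfadjointness of $\Opfree$, and the rest collapses to $\langle(-\Delta-\eta)u_c,\gamma_\eta k\rangle_{L^2}-\langle\gamma_\eta h,(-\Delta-\eta)v_c\rangle_{L^2}$. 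Applying \eqref{goodtohave} from Lemma~\ref{Lemma_gammafield} (which says $\gamma_\eta^*w=((\Opfree-\eta)^{-1}w)\vert_\Sigma$) with $w=(-\Delta-\eta)u_c$ and $w=(-\Delta-\eta)v_c$, this becomes $\langle u_c\vert_\Sigma,k\rangle_{L^2(\Sigma)}-\langle h,v_c\vert_\Sigma\rangle_{L^2(\Sigma)}=\langle\Gamma_1 u,\Gamma_0 v\rangle-\langle\Gamma_0 u,\Gamma_1 v\rangle$. Theorem~\ref{thm:A2} then yields that $S=T\upharpoonright(\ker\Gamma_0\cap\ker\Gamma_1)$ — which is exactly the operator in \eqref{sssi} — is densely defined, closed and symmetric with $\overline T=S^*$, and that $\{L^2(\Sigma),\Gamma_0,\Gamma_1\}$ is a quasi boundary triple with $A_0=\Opfree$.

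To identify $\Op$ as the extension in \eqref{ddx}, for $u=u_c+\gamma_\eta h\in\dom T$ I would compute, using $(-\Delta-\eta)\gamma_\eta h=h\delta_\Sigma$,
\begin{equation*}
 \cA_\alpha u=-\Delta u-\tfrac{1}{\alpha}(u\vert_\Sigma)\delta_\Sigma=-\Delta u_c+\eta\gamma_\eta h+\bigl(h-\tfrac{1}{\alpha}u\vert_\Sigma\bigr)\delta_\Sigma
\end{equation*}
in $H^{-2}(\R^3)$; since $g\delta_\Sigma\in L^2(\R^3)$ forces $g=0$, we get $\cA_\alpha u\in L^2(\R^3)$ exactly when $u\vert_\Sigma=\alpha h$, and then $\cA_\alpha u=-\Delta u_c+\eta\gamma_\eta h=Tu$. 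As $u\vert_\Sigma=u_c\vert_\Sigma+\overline{B_\eta}h$ by Definition~\ref{def:genTrace}, the condition $u\vert_\Sigma=\alpha h$ reads $\Gamma_1 u=(\alpha-\overline{B_\eta})\Gamma_0 u$. Using from Section~\ref{subsec:Well-definedness} that $\dom\overline{B_\lambda}$ and $u\vert_\Sigma$ are independent of $\lambda<0$ and that any $u\in\dom\Op$ admits a decomposition with $\lambda=\eta$, this shows $\dom\Op=\ker\bigl(\Gamma_1-(\alpha-\overline{B_\eta})\Gamma_0\bigr)$ and $\Op=T$ on it.

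Finally, for the $\gamma$-field and Weyl function I would check directly that for $\lambda\in\C\setminus[0,\infty)$ and $h\in\dom\overline{B_\eta}$ the function $(-\Delta-\lambda)^{-1}(h\delta_\Sigma)$ lies in $\ker(T-\lambda)$ with $\Gamma_0$-image $h$: by the resolvent identity (cf.~\eqref{eq:uc}) $(-\Delta-\lambda)^{-1}(h\delta_\Sigma)-\gamma_\eta h=(\lambda-\eta)(-\Delta-\lambda)^{-1}\gamma_\eta h\in H^2(\R^3)$, so its $H^2$-component is this element and its $\gamma_\eta$-component is $\gamma_\eta h$, while $(-\Delta-\lambda)[(-\Delta-\lambda)^{-1}(h\delta_\Sigma)]=h\delta_\Sigma$ gives $(T-\lambda)(-\Delta-\lambda)^{-1}(h\delta_\Sigma)=0$. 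Hence $\gamma(\lambda)h=(-\Delta-\lambda)^{-1}(h\delta_\Sigma)$, which is \eqref{gm}, and $M(\lambda)h=\Gamma_1\gamma(\lambda)h=[((-\Delta-\lambda)^{-1}-(-\Delta-\eta)^{-1})h\delta_\Sigma]\vert_\Sigma$, which is \eqref{mmi}; boundedness of $M(\lambda)$ on the dense domain $\dom\overline{B_\eta}$ follows since $h\mapsto h\delta_\Sigma$ is bounded into $H^{-2}(\R^3)$, the resolvent difference is bounded $H^{-2}(\R^3)\to H^2(\R^3)$, and the trace is bounded $H^2(\R^3)\to L^2(\Sigma)$. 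The only genuinely delicate step is the Green identity, where the non-$H^2$ summands $\gamma_\eta h$ must be handled through the $H^{-2}$--$H^2$ duality together with the representation \eqref{goodtohave} of $\gamma_\eta^*$; everything else is bookkeeping with the direct-sum structure of $\dom T$ and the $\lambda$-independence from Section~\ref{subsec:Well-definedness}.
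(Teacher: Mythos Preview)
Your proof is correct and follows essentially the same route as the paper: both apply Theorem~\ref{thm:A2}, verify the Green identity by expanding via \eqref{zuzu} and reducing to $\langle u_c\vert_\Sigma,k\rangle-\langle h,v_c\vert_\Sigma\rangle$ (you use \eqref{goodtohave}, the paper the $H^{-2}$--$H^2$ duality, which are equivalent), and then identify $\Op$ through the condition $u\vert_\Sigma=\alpha h$. The only cosmetic differences are that you check density of $\ran(\Gamma_0,\Gamma_1)^\top$ constructively while the paper uses an orthogonality argument, and you derive \eqref{gm}--\eqref{mmi} by direct computation whereas the paper invokes the abstract identities \eqref{g1}--\eqref{m1}.
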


\begin{proof}
In order to show that the mappings in \eqref{qbtttt} yield a quasi boundary triple for $S^*$ we make use of Theorem~\ref{thm:A2}.
Note first that the identities
\begin{equation*}
 S=T\upharpoonright\bigl(\ker\Gamma_0\cap\ker\Gamma_1\bigr)\quad\text{and}\quad \Opfree=T\upharpoonright\ker\Gamma_0
\end{equation*}
hold. Hence it remains to check that the Green identity 
\begin{equation}\label{ggg}
\langle Tu,v\rangle_{L^2 (\R^3)} - \langle u,Tv\rangle_{L^2 (\R^3)}
=
\langle\Gamma_1 u,\Gamma_0 v\rangle_{L^2(\Sigma)} - \langle\Gamma_0 u,\Gamma_1 v\rangle_{L^2(\Sigma)}
\end{equation}
holds for all $u,v\in\dom T$
and that the range of the mapping $u\mapsto (\Gamma_0 u,\Gamma_1 u)^\top$ is dense in $L^2(\Sigma)\times L^2(\Sigma)$.
In order to verify \eqref{ggg} decompose $u,v\in\dom T$ 
in the form $u=u_c+\gamma_\eta h$ and $v=v_c+\gamma_\eta k$, where $u_c,v_c\in H^2(\R^3)$ and $h,k\in\dom \overline{B_\eta}$.
With the help of \eqref{zuzu} one computes 
\begin{equation*}
 \begin{split}
 \langle & Tu,v\rangle_{L^2 (\R^3)}  - \langle u,Tv\rangle_{L^2 (\R^3)} \\
 &\quad = \bigl\langle T(u_c+\gamma_\eta h),v_c+\gamma_\eta k\bigr\rangle_{L^2 (\R^3)} -\bigl\langle u_c+\gamma_\eta h,T(v_c+\gamma_\eta k)\bigr\rangle_{L^2 (\R^3)}\\
 &\quad=\bigl\langle -\Delta u_c+\eta\gamma_\eta h,v_c+\gamma_\eta k\bigr\rangle_{L^2 (\R^3)} - \bigl\langle u_c+\gamma_\eta h,-\Delta v_c+\eta\gamma_\eta k\bigr\rangle_{L^2 (\R^3)} \\
 &\quad=\langle -\Delta u_c,\gamma_\eta k\rangle_{L^2 (\R^3)} + \langle\eta\gamma_\eta h,v_c\rangle_{L^2 (\R^3)}  \\
 &\qquad\qquad\qquad\qquad\qquad\qquad
  - \langle u_c,\eta\gamma_\eta k\rangle_{L^2 (\R^3)} - \langle\gamma_\eta h,-\Delta v_c\rangle_{L^2 (\R^3)}\\
 &\quad=\bigl\langle (-\Delta -\eta)u_c,\gamma_\eta k\bigr\rangle_{L^2 (\R^3)} - \bigl\langle\gamma_\eta h,(-\Delta -\eta)v_c\bigr\rangle_{L^2 (\R^3)} \\
 &\quad=\langle u_c, k\delta_\Sigma \rangle_{2,-2}-\langle h\delta_\Sigma, v_c\rangle_{-2,2}\\
 &\quad=\langle u_c\vert_\Sigma,k\rangle_{L^2(\Sigma)}-\langle h,v_c\vert_\Sigma\rangle_{L^2(\Sigma)},
 \end{split}
\end{equation*}
which shows \eqref{ggg}. Next assume that for some $\varphi,\psi\in L^2(\Sigma)$ 
\begin{equation*}
 0=\langle\varphi,\Gamma_0 u\rangle_{L^2(\Sigma)}+\langle \psi,\Gamma_1 u\rangle_{L^2(\Sigma)}=\langle\varphi,h\rangle_{L^2(\Sigma)}+
 \bigl\langle\psi,u_c\vert_\Sigma\bigr\rangle_{L^2(\Sigma)}
\end{equation*}
holds for all $u=u_c+\gamma_\eta h\in\dom T$. Restricting to elements $u$ in $H^2(\R^3)$ (i.e. $h=0$) it follows that $\psi=0$.
Finally, if $0=\langle\varphi,h\rangle_{L^2(\Sigma)}$ for all $h\in\dom \overline{B_\eta}$ then $\varphi=0$ as $\overline{B_\eta}$ is densely defined in $L^2(\Sigma)$.
Now it follows from Theorem~\ref{thm:A2} that $\overline T=S^*$ and that $\{L^2(\Sigma),\Gamma_0,\Gamma_1\}$ is a quasi boundary triple for $S^*$. 

In order to see that $\Op=T\upharpoonright\ker(\Gamma_1-(\alpha-\overline{B_\eta})\Gamma_0)$ holds, suppose first that $\Gamma_1 u=(\alpha-\overline{B_\eta})\Gamma_0u$
or, equivalently, $u_c|_\Sigma=(\alpha-\overline{B_\eta})h$ for some
$u=u_c+\gamma_\eta h\in\dom T$. 
Then it follows from the definition of $u |_\Sigma$ in \eqref{traceudef} that
\begin{equation*}
 u\vert_\Sigma=u_c |_\Sigma + (\gamma_\eta h) |_\Sigma=u_c\vert_\Sigma+\overline{B_\eta} h =\alpha h
\end{equation*}
and hence $h=\frac{1}{\alpha} u\vert_\Sigma$. Together with \eqref{ttt} and Definition~\ref{schdef} this implies 
$$\ker \bigl(\Gamma_1-(\alpha-\overline{B_\eta})\Gamma_0\bigr) \subset \dom (\Op)$$ 
and $\Op u = T u$ for all $u \in \ker (\Gamma_1-(\alpha-\overline{B_\eta})\Gamma_0)$. If, conversely, $u \in \dom (\Op)$ then $u = u_c + \gamma_\eta h$ 
for some $u_c \in H^2 (\R^3)$ and some $h \in \dom \overline{B_\eta}$, in particular, $u \in \dom T$. Moreover, 
\begin{align*}
 Tu=- \Delta u - h \delta_\Sigma \in L^2 (\R^3) 
\end{align*}
and
\begin{align*}
\Op u=- \Delta u - \frac{1}{\alpha} u |_\Sigma \cdot \delta_\Sigma \in L^2 (\R^3),
\end{align*}
which implies $(h - \frac{1}{\alpha} u |_\Sigma) \delta_\Sigma \in L^2 (\R^3)$ and thus $h - \frac{1}{\alpha} u |_\Sigma = 0$. Using again the definition of $u |_\Sigma$ 
in \eqref{traceudef}
we obtain
\begin{align*}
 0 = u |_\Sigma - \alpha h = u_c |_\Sigma + (\gamma_\eta h) |_\Sigma - \alpha h = u_c|_{\Sigma} - (\alpha-\overline{B_\eta}) h
\end{align*}
and thus $u \in \ker (\Gamma_1-(\alpha-\overline{B_\eta})\Gamma_0)$. 
The second identity in \eqref{ddx} follows.

Next it will be shown that the $\gamma$-field and Weyl function corresponding to $\{L^2(\Sigma),\Gamma_0,\Gamma_1\}$ have the form in \eqref{gm} and \eqref{mmi}. Note first
that \eqref{kernt} and the definition of $\Gamma_0$ imply $\gamma(\eta)h=\gamma_\eta h=(-\Delta-\eta)^{-1}(h\delta_\Sigma)$ for all $h\in\ran\Gamma_0=\dom\overline{B_\eta}$. 
Furthermore, for $\lambda\in\C\setminus [0,\infty)$
we conclude from \eqref{g1} and \eqref{ddx} that
\begin{equation*}
 \gamma(\lambda)h=(\Opfree-\eta)(\Opfree-\lambda)^{-1}\gamma(\eta)h=(-\Delta-\lambda)^{-1}(h\delta_\Sigma) 
\end{equation*}
holds. Moreover,
\begin{equation}\label{goodtohave2}
 \gamma(\lambda)^*u=\Gamma_1(\Opfree-\overline{\lambda})^{-1}u=\bigl((\Opfree-\overline{\lambda})^{-1}u\bigr)\vert_\Sigma
\end{equation}
for all $u\in L^2(\R^3)$ by \eqref{okfine}; cf. \eqref{goodtohave}.
It follows from  the definition of $\Gamma_1$ that
\begin{equation*}
 M(\eta) h=\Gamma_1 \gamma(\eta) h=\Gamma_1(-\Delta-\eta)^{-1}(h\delta_\Sigma)=0
\end{equation*}
holds for all $h\in\ran\Gamma_0=\dom\overline{B_\eta}$.
From \eqref{m1} and \eqref{goodtohave2} we then conclude for $\lambda\in\C\setminus [0,\infty)$ and $h\in\ran\Gamma_0=\dom\overline{B_\eta}$
\begin{equation*}
 \begin{split}
  M(\lambda)h&=(\lambda-\eta)\gamma(\eta)^*(\Opfree-\eta)(\Opfree-\lambda)^{-1}\gamma(\eta)h\\
             &=\bigl[(\lambda-\eta)(\Opfree-\lambda)^{-1}(-\Delta-\eta)^{-1} h\delta_\Sigma\bigr]\vert_\Sigma\\
             &=\bigl[\bigl((-\Delta-\lambda)^{-1}-(-\Delta-\eta)^{-1}\bigr) h\delta_\Sigma\bigr]\vert_\Sigma;
 \end{split}
\end{equation*}
cf.~\eqref{eq:uc}. We have shown that \eqref{mmi} holds. 
Note also that $M(\eta)=0$ and \eqref{m1} with $\mu=\eta$ imply that the operators $M(\lambda)$ are bounded. This completes the proof of Proposition~\ref{qbtprop}.
\end{proof}

\begin{rem}\label{remchen}
If the operator $T$ in \eqref{ttt} is replaced by the operator
\begin{equation*}
 T'u=-\Delta u-h\delta_\Sigma,\qquad
 \dom T'=H^2(\R^3)\,\dot +\,\bigl\{\gamma_\eta h:h\in L^2(\Sigma)\bigr\}, 
\end{equation*}
then $T\subset T'$ and the assertions in Proposition~\ref{qbtprop} remain valid with $T$ replaced by $T'$ and $\dom\overline{B_\eta}$ replaced
by $L^2(\Sigma)$, respectively. In particular, in this situation the boundary map $\Gamma_0$ maps onto $L^2(\Sigma)$ and hence the quasi boundary
triple $\{L^2(\Sigma),\Gamma_0,\Gamma_1\}$ in Proposition~\ref{qbtprop} is a generalized boundary triple, and the values $M(\lambda)$ of the 
Weyl function are bounded operators defined on $L^2(\Sigma)$. It follows from \eqref{eq:uc} and \eqref{eq:integral2} that 
\begin{equation*}
(M(\lambda)h)(x)=\int_\Sigma h(y)\frac{e^{i\sqrt{\lambda}\vert x - y\vert}-e^{i\sqrt{\eta}\vert x - y\vert}}{4\pi \vert x - y\vert}\,d\sigma(y),\quad x \in \Sigma, \quad h\in L^2(\Sigma).
\end{equation*}
Note, however, that 
$\Gamma_1$ is not surjective and $\{L^2(\Sigma),\Gamma_0,\Gamma_1\}$ is not an ordinary boundary triple. 
\end{rem}

\end{document}